%% file: colt_our_paper.tex
\documentclass[final,12pt]{colt2026} 

\title[DP synthetic data for smooth queries]{Minimax optimal differentially private synthetic data for smooth queries}
\usepackage{times}
\usepackage{amsmath,amssymb,mathtools}
\usepackage{algorithm,algpseudocode}
\mathtoolsset{showonlyrefs}
\usepackage{xcolor}
\usepackage{hyperref}
\usepackage{cleveref}
\input{macros.tex}
\RestyleAlgo{ruled}

\usepackage{cancel}
\usepackage[normalem]{ulem}

\coltauthor{%
 \Name{Rundong Ding} \Email{rundongd@usc.edu}\\
 \addr Department of Mathematics, University of Southern California
 \AND
 \Name{Yiyun He} \Email{yih130@ucsd.edu}\\
 \addr Department of Mathematics, University of California San Diego
 \AND
 \Name{Yizhe Zhu} \Email{yizhezhu@usc.edu}\\
 \addr Department of Mathematics, University of Southern California
}

\begin{document}

\maketitle

\begin{abstract}
Differentially private synthetic data enables the sharing and analysis of sensitive datasets while providing rigorous privacy guarantees for individual contributors. A central challenge is to achieve strong utility guarantees for meaningful downstream analysis. Many existing methods ensure uniform accuracy over broad query classes, such as all Lipschitz functions, but this level of generality often leads to suboptimal rates for statistics of practical interest. Since many common data analysis queries exhibit smoothness beyond what worst-case Lipschitz bounds capture, we ask whether exploiting this additional structure can yield improved utility.

We study the problem of generating $(\varepsilon,\delta)$-differentially private synthetic data from a dataset of size $n$ supported on the hypercube
$[-1,1]^d$, with utility guarantees uniformly for all smooth queries having bounded derivatives up to order $k$.
We propose a polynomial-time algorithm that achieves a minimax error rate of
$O_{k,d}(n^{-\min \{1, \frac{k}{d}\}})$, up to a $\log(n)$ factor.
This characterization uncovers a phase transition at $k=d$. Our results generalize the Chebyshev moment matching framework of
\citep{musco2025sharper,wang2016differentially} and strictly improve the error rates for $k$-smooth queries established in \citep{wang2016differentially}. Moreover, we establish the first minimax lower bound for the utility of $(\varepsilon,\delta)$-differentially private synthetic data with respect to $k$-smooth queries, extending the Wasserstein lower bound for $\varepsilon$-differential privacy in \citep{boedihardjo2024private}. \footnotetext[1]{Accepted for presentation at the Conference on Learning Theory (COLT) 2026.}

\end{abstract}

\begin{keywords}%
  Differential privacy, synthetic data, smooth queries, minimax lower bound
\end{keywords}

\section{Introduction}

Differential privacy (DP) has emerged as a leading standard for safeguarding privacy in settings that require statistical analysis over large and sensitive datasets. Introduced to provide a rigorous, attack-agnostic notion of privacy protection, differential privacy limits what an adversary can infer about any single individual, even when the adversary has substantial auxiliary information \citep{dwork2014algorithmic}. Formally, a randomized mechanism is differentially private if its output distributions are nearly indistinguishable on any pair of adjacent datasets that differ in only one individual’s record. This framework is increasingly adopted in practice, most prominently in the disclosure avoidance system for the 2020 U.S. Census \citep{abowd2019census, hawes2020implementing, hauer2021differential}, and in large-scale deployments by technology companies  \citep{cormode2018privacy, dwork2019differential}. Beyond its foundational appeal, differential privacy has enabled a broad range of data science workflows, including private query answering and workload optimization \citep{mckenna2018optimizing}, regression and empirical risk minimization \citep{chaudhuri2008privacy, su2016differentially}, parameter estimation \citep{duchi2018minimax}, and private stochastic gradient methods for modern machine learning \citep{song2013stochastic, abadi2016deep}.

Much of the existing DP literature focuses on designing task-specific mechanisms tailored to a predefined workload (e.g., a fixed set of queries or a particular model class). While often highly accurate, these approaches can require substantial expertise, ranging from sensitivity analysis and mechanism selection to careful privacy accounting across iterative computations. A complementary and increasingly practical alternative is differentially private synthetic data generation, where one releases a synthetic dataset that approximates statistical properties of the original data while satisfying differential privacy \citep{hardt2012simple, bellovin2019privacy, wasserman2010statistical, barak2007privacy}. By the post-processing property \citep{dwork2014algorithmic} of differential privacy, analysts can subsequently perform a wide range of downstream tasks on the released synthetic dataset without incurring additional privacy loss.

\subsection{Differentially private synthetic data}

Differentially private synthetic data consist of artificially generated records that aim to
preserve the aggregate statistical structure of a sensitive source dataset (often user-generated),
while providing rigorous privacy guarantees to the individuals who contributed to the data
\citep{ponomareva2025dp}. By decoupling data utility from direct access to raw records, DP synthetic data offers a principled way to unlock the value of datasets that are otherwise difficult to share due to privacy concerns.

The task of generating private synthetic data can be formulated as follows.
Let $(\Omega,\rho)$ be a metric space and consider a dataset
$ X=(X_1,\dots, X_n)\in \Omega^n$.
The goal is to design an efficient randomized algorithm that outputs a differentially private
synthetic dataset $Y=(Y_1,\dots,Y_m)\in \Omega^m$ such that the empirical measures
\[
  p_{X}=\frac{1}{n} \sum_{i=1}^n \delta_{X_i}
  \quad \text{and} \quad
  p_{Y}=\frac{1}{m} \sum_{j=1}^m \delta_{Y_j}
\]
are close under an appropriate notion of discrepancy.
A large body of prior work evaluates the utility of DP synthetic data by requiring accuracy for a
prescribed finite collection of queries
\citep{hardt2010multiplicative,hardt2012simple,bun2017make,bun2023continual,gonzalez2024mirror}.
At the same time, there are fundamental computational barriers: under standard cryptographic
assumptions, \citep{ullman2020pcps} showed that generating differentially private
synthetic Boolean data that are simultaneously accurate for rich classes of Boolean queries  is computationally
intractable in general.

Motivated by the desire for task-agnostic utility guarantees, a more recent line of work measures
the quality of synthetic data in a continuous space $\Omega$ via
$\mathbb{E}\, W_1(p_{X},p_{Y})$, where $W_1$ denotes the $1$-Wasserstein distance and the
expectation is taken over the randomness of the algorithm.
By the Kantorovich--Rubinstein duality (see, e.g., \citep{villani2009optimal}),
\begin{equation}
\label{eq:KR_duality}
  W_1(p_X,p_Y)
  = \sup_{\mathrm{Lip}(f)\le 1}
  \left(\int f\,\dd p_X-\int f\,\dd p_Y\right),
\end{equation}
where the supremum ranges over all $1$-Lipschitz functions on $\Omega$.
Since many learning objectives and evaluation functionals are Lipschitz, or can be controlled by
Lipschitz losses
\citep{von2004distance,kovalev2022lipschitz,bubeck2021universal,meunier2022dynamical},
\eqref{eq:KR_duality} yields a uniform performance guarantee for a broad range of downstream tasks
performed on synthetic datasets whose empirical distribution is close to $p_X$ in $W_1$.

Within this Wasserstein framework, \citep{boedihardjo2024private} gave the first $\varepsilon$-DP
algorithm achieving the minimax rate $n^{-1/d}$ (up to polylogarithmic factors) for data
in the hypercube $[0,1]^d$ under the $\ell_{\infty}$ metric.
Subsequently, \citep{he2023algorithmically} developed two classes of algorithms that attain the same
minimax-optimal $n^{-1/d}$ rate for $d\ge 2$.
These constructions have enabled the analysis of several downstream learning tasks
\citep{gu2025differentially,wirth2025private,cao2025differentially}. Under the relaxed notion of $(\varepsilon,\delta)$-DP, \citep{musco2025sharper} analyzed a Chebyshev
moment matching algorithm that achieves the same $n^{-1/d}$ Wasserstein rate.

Recent work has sought to surpass the worst-case $n^{-1/d}$ scaling by exploiting additional
structure in the data.
For example, \citep{he2025differentially} considered datasets concentrated near a $d'$-dimensional
subspace and obtained a rate $n^{-1/d'}$ up to polynomial factors in $d$, while
\citep{donhauser2024certified} studied data supported on an unknown $d'$-dimensional manifold in
$[0,1]^d$.
When the dataset satisfies suitable sparsity conditions, \citep{holland2025private} showed that the
error rate can be further improved via an adaptation of the Private Measure Mechanism introduced in
\citep{he2023algorithmically}.
In a related direction, \citep{gonzalez2025private} analyzed the practical Private Evolution algorithm
\citep{lin2023differentially} through its connection to the Private Signed Measure Mechanism in \citep{he2023algorithmically}. Relatedly, \citep{feldman2024instance} studied instance-optimal private density estimation under
Wasserstein loss, and \citep{he2024online} extended these ideas to online synthetic data release.

Beyond Wasserstein distance, several alternative discrepancy measures have been considered,
including $L_1$-type distances \citep{wasserman2010statistical} and more general integral probability
metrics (IPMs) \citep{asadi2023gibbs}.
The $1$-Wasserstein distance is a particular example of an IPM, and
\citep{bousquet2020synthetic} characterized the sample complexity of DP synthetic data generation for
unbounded query classes under IPMs.
Moreover, \citep{donhauser2024certified} analyzed the class of $s$-sparse queries and obtained an
$n^{-1/s}$ error rate.

While Wasserstein-based guarantees are appealing due to their uniform control over all
$1$-Lipschitz test functions, they can be overly pessimistic for smoother statistics.
In practice, many summaries of interest correspond to smooth queries with bounded derivatives,
such as low-order moments and correlations $f(\bx)=\prod_{j\in J\subset [d]} x_j$,  linear statistics
$f(\bx)=\langle v,\bx\rangle$ \citep{blum2013learning,hardt2010multiplicative},
quadratic functionals  $f(x)=\bx^\top A \bx$ arising in covariance and
correlation estimation \citep{amin2019differentially},
and smooth loss or score functions used in private regression, classification, and convex optimization
\citep{chaudhuri2011differentially,ullman2015private} such as Gaussian kernel function $f(\bx)=\exp(\frac{-\|\bx-\mu\|^2}{2\sigma^2})$ and logistic function $f(x)=\frac{1}{1+e^{-\langle v,\bx\rangle }}$.
For such queries, utility guarantees tailored to smoothness can be substantially sharper than
worst-case Lipschitz bounds \citep{wang2016differentially}. This observation motivates the following question:
\begin{center}
  \textit{Can we generate DP synthetic data with utility guarantees for smooth queries that improve
    the  $n^{-1/d}$ rate?}
\end{center}

Prior work \citep{wang2016differentially} studied the release of $k$-smooth queries under differential
privacy and achieved an error rate of
$O\left(n^{-\frac{2k}{3d+2k}}\right)$ (up to its dependence on the privacy parameter) under $(\varepsilon,\delta)$-DP and a rate $\left(n^{-\frac{k}{2d+k}}\right)$ under $\varepsilon $-DP.
In the special case $k=1$, this rate does not recover the minimax-optimal $n^{-1/d}$ exponent
established in \citep{boedihardjo2024private}.
In contrast, we improve the dependence on the sample size to
$\tilde O_{k,d}\!\left(n^{-\min \{1, \frac{k}{d}\}}\right)$ and show that this rate is minimax optimal.  
Our results thus uncover a refined trade-off between query smoothness and achievable utility
guarantees for differentially private synthetic data.

\subsection{Main results}
We will consider $(\varepsilon,\delta)$-differential privacy defined below.
\begin{definition}[$(\varepsilon,\delta)$-Differential Privacy]
A randomized algorithm $\mathcal{M} : \Omega^n \to \mathcal{Z}$ is said to satisfy
$(\varepsilon,\delta)$-differential privacy if for all datasets
$ X,  X' \in \Omega^n$ that differ in at most one entry
and for all measurable sets $S \subseteq \mathcal{Z}$,
\[
\mathbb{P}\!\left( \mathcal{M}( X) \in S \right)
\;\le\;
e^{\varepsilon}\,
\mathbb{P}\!\left( \mathcal{M}( X') \in S \right)
\;+\;
\delta .
\]
\end{definition}
We focus on the class of smooth queries. For an integer $k \ge 1$, define the \textit{$k$-smooth function class}

    \begin{equation}\label{eq:def_Fk}
\mathcal{F}_k \coloneqq
\Bigl\{\, f : [-1,1]^d \to \mathbb{R} \;\Bigm|\;
\max_{0\le |\balpha| \le k}\, \lVert \partial^{\balpha} f \rVert_\infty \le 1
\,\Bigr\}
\end{equation}
and the associated \textit{integral probability metric} \citep{sriperumbudur2009integral} between two probability distribution $p,q$ on $[-1,1]^d$:
\begin{equation}
\label{eq: def_dk}
    d_k(p, q) \coloneqq \sup_{f \in \mathcal{F}_k} \big| \langle f, p - q \rangle \big| = \sup_{f \in \mathcal{F}_k} \left|\int f \dd p -\int f\dd q \right|.
\end{equation}
When \(k=1\), the metric \(d_1\) is equivalent to the \(1\)-Wasserstein distance
on \([-1,1]^d\) equipped with the \(\ell_2\)-metric. Indeed, by the
Kantorovich--Rubinstein duality \cite{villani2009optimal} and a standard smooth approximation argument,
\begin{align}\label{eq:Wd_equivalence}
    W_1(p,q)\le d_1(p,q)\le \sqrt d\, W_1(p,q).
\end{align}
The upper inequality follows because every \(f\in\mathcal F_1\) is
\(\sqrt d\)-Lipschitz with respect to \(\ell_2\), while the lower inequality
follows since smooth \(1\)-Lipschitz functions are contained in \(\mathcal F_1\)
and are dense in the class of all \(1\)-Lipschitz functions.

 For any data set $X=(x_1,\ldots,x_n)\in \Omega^n$, where $\Omega=[-1,1]^d$ equipped with the $\ell_2$-metric, we denote its associated empirical distribution as
  \[
    p_X \coloneqq  \frac1n \sum_{j=1}^n \delta_{x_j}.
  \]
The utility of a given differentially private synthetic data generation algorithm  $\mathcal M$ is given by $d_{k}(p_{X}, p_{\mathcal M(X)})$, where $\mathcal M(X)\subseteq \Omega$ is the output of the algorithm $\mathcal M$.

Now we are ready to state our algorithmic results:  

\begin{theorem}[DP synthetic data generation for $k$-smooth queries]
\label{thm:main}
Assume  $\varepsilon \in (n^{-1},1)$, $\delta\in (0,1)$ and $
\frac{\varepsilon n}{\sqrt{\log(1.25/\delta)}}\geq (ck)^{\max\{d,k\}}$ for some absolute constant $c>1$.
    There exists a polynomial-time $(\varepsilon,\delta)$-DP algorithm that outputs a synthetic dataset $Y\subseteq \Omega$ such that 
    \begin{align}
        \E d_k(p_{X},p_Y)\leq 
        \begin{cases}
             C_{d,k}   \left(\dfrac{\sqrt{\log(1.25/\delta)}}{\varepsilon n}{}\right)^{k/d} &\text{if }k<d,\\
            \dfrac{C_{d,k}\sqrt{\log(1.25/\delta)}\log (\varepsilon n)}{\varepsilon n} &\text{if }k=d,\\ 
            \dfrac{C_{d,k}\sqrt{\log(1.25/\delta)}}{\varepsilon n} &\text{if }k>d.
        \end{cases} 
    \end{align}
   Here  $C_{d,k} \le (C k)^k d^{\max\{1,k/2\}}$ for a universal constant $C>0$.
\end{theorem}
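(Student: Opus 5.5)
We follow the Chebyshev moment matching framework of \citep{musco2025sharper,wang2016differentially}. Fix a degree $K$ (to be chosen as a function of $n,\varepsilon,\delta$) and consider the tensor Chebyshev polynomials $T_{\bbeta}(\bx)=\prod_{j=1}^d T_{\beta_j}(x_j)$ for multi-indices $\bbeta\in\{0,\dots,K\}^d$. The algorithm has three steps.

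\begin{enumerate}
\item Compute the empirical moments $m_{\bbeta}=\langle T_{\bbeta},p_X\rangle$. Each has sensitivity at most $2/n$, since $|T_{\bbeta}|\le 1$.
\item Release the noisy moments $\tilde m_{\bbeta}=m_{\bbeta}+g_{\bbeta}$ via the Gaussian mechanism. The noise will be \emph{weighted}: $g_{\bbeta}\sim N(0,\sigma_{\bbeta}^2)$ with $\sigma_{\bbeta}\propto w_{\bbeta}^{-1/2}$ for weights $w_{\bbeta}$ chosen so that $\sum_{\bbeta} w_{\bbeta}^{-1}\cdot(\text{sensitivity})^2$ is controlled. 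This is the $\ell_2$-sensitivity calibration in \citep{musco2025sharper}, which will give privacy via the Gaussian mechanism and post-processing.
\item Solve a linear program over distributions supported on a fine grid of $[-1,1]^d$, say of mesh $1/\mathrm{poly}(n)$. The program finds $q$ minimizing a weighted $\ell_1$ discrepancy $\sum_{\bbeta}\lambda_{\bbeta}|\langle T_{\bbeta},q\rangle-\tilde m_{\bbeta}|$. We then sample or round $q$ to $m$ points $Y$. This runs in polynomial time.
\end{enumerate}

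For utility, take $f\in\mathcal F_k$ and decompose $f=P_Kf+(f-P_Kf)$, where $P_Kf$ is a Jackson-type tensor Chebyshev approximant of degree $K$ in each coordinate.

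\emph{Truncation term.} The multivariate Jackson theorem gives $\|f-P_Kf\|_\infty\lesssim (Ck)^k d\,K^{-k}$. This contributes $2\|f-P_Kf\|_\infty$ to $|\langle f,p_X-q\rangle|$.

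\emph{Coefficient decay.} For the polynomial part we need decay of the Chebyshev coefficients $\hat f_{\bbeta}$ of a $k$-smooth function. Working on the cosine side, $f(\cos\theta_1,\dots,\cos\theta_d)$ has bounded derivatives of order $k$ in each direction, so one expects
\[
|\hat f_{\bbeta}|\lesssim C_k\,(1+\|\bbeta\|_\infty)^{-k}.
\]
The error on the polynomial part is then
\[
\sum_{\bbeta}|\hat f_{\bbeta}|\,\bigl|\langle T_{\bbeta},p_X-q\rangle\bigr|\le \sum_{\bbeta}|\hat f_{\bbeta}|\Bigl(|g_{\bbeta}|+\bigl|\langle T_{\bbeta},q\rangle-\tilde m_{\bbeta}\bigr|\Bigr).
\]
Optimality of the LP, together with $p_X$ being feasible up to grid discretization, bounds the weighted second term by the weighted noise. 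The whole polynomial part is therefore of order
\[
\sum_{\bbeta}|\hat f_{\bbeta}|\,\sigma_{\bbeta}\,\sqrt{\log}.
\]

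\emph{Calibration.} The noise level obeys $\sigma_{\bbeta}\asymp \frac{\sqrt{\log(1.25/\delta)}}{\varepsilon n}\bigl(\sum_{\bbeta'}w_{\bbeta'}\bigr)^{1/2}w_{\bbeta}^{-1/2}$. Choosing $w_{\bbeta}$ to decay like $(1+\|\bbeta\|)^{-k}$ and summing over the roughly $r^{d-1}$ indices at level $r$, the polynomial part is of order $\frac{\sqrt{\log}}{\varepsilon n}\sum_{r\le K}r^{d-1-k}$. This sum is
\[
\sum_{r\le K}r^{d-1-k}\;\asymp\;
\begin{cases}
K^{d-k} & \text{if } k<d,\\
\log K & \text{if } k=d,\\
O(1) & \text{if } k>d.
\end{cases}
\]
Balancing against $K^{-k}$ gives $K\asymp(\varepsilon n/\sqrt{\log(1.25/\delta)})^{1/d}$ and the rate $(\cdot)^{k/d}$ when $k<d$. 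For $k\ge d$, take $K$ large, e.g.\ $K\asymp \varepsilon n$, so that the truncation term is $\le 1/(\varepsilon n)$. The hypothesis $\varepsilon n/\sqrt{\log}\ge (ck)^{\max\{d,k\}}$ guarantees $K\ge ck$, so that Jackson's bound applies.

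\emph{Remaining error terms.} The grid and sampling errors are handled separately. Use a weighted grid, or the exact $q$ with $m$ large, and bound $\langle f,q-p_Y\rangle$ through the same Chebyshev expansion, or via $d_k\le d_1\lesssim W_1$ if the rounding is deterministic (e.g.\ via a Carathéodory-type sparsification preserving the moments up to degree $K$).

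\emph{Main obstacle.} The main obstacle will be the coefficient-decay estimate with explicit constants $(Ck)^k d^{\max\{1,k/2\}}$ in the multivariate mixed setting. Only $|\balpha|\le k$ is controlled, not $k$ derivatives in each coordinate separately. So the decay is only in $\|\bbeta\|_2^{-k}$ in an averaged ($\ell_2$/Parseval) sense, not pointwise. For this step we would first try bounding $\sum_{\bbeta}\|\bbeta\|_2^{2k}|\hat f_{\bbeta}|^2\lesssim d^k\max_{|\balpha|=k}\|\partial^{\balpha}(f\circ\cos)\|_2^2$ via Parseval on the torus. We would then apply Cauchy--Schwarz against the noise weights, and this is why the noise must be weighted by $\|\bbeta\|_2^{-k}$-type weights.
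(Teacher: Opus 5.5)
\paragraph{Gap: the fitting step and the error aggregation.} Your overall architecture matches the paper, and your noise weighting $w_{\bbeta}=\|\bbeta\|_2^{-k}$ is the paper's. But the utility accounting has a genuine gap, in the weighted $\ell_1$ fitting step and the calibration built on it. As you set it up, the polynomial error is $\sum_{\bbeta}|\hat f_{\bbeta}|\,\sigma_{\bbeta}$, so noise standard deviations add linearly. Write $S_a:=\sum_{\bbeta\neq 0}\|\bbeta\|_2^{-a}\asymp\sum_{r\le K}r^{d-1-a}$. Plugging in $|\hat f_{\bbeta}|\lesssim\|\bbeta\|_2^{-k}$ and your $\sigma_{\bbeta}\asymp\frac{\sqrt{\log(1.25/\delta)}}{\varepsilon n}\sqrt{S_k}\,\|\bbeta\|_2^{k/2}$ gives $\frac{\sqrt{\log(1.25/\delta)}}{\varepsilon n}\sqrt{S_k}\,S_{k/2}$. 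The level-$r$ terms are $r^{d-1-k/2}$, not $r^{d-1-k}$. So for $k<d$ this is $K^{3d/2-k}$ times $\sqrt{\log(1.25/\delta)}/(\varepsilon n)$, not $K^{d-k}$ times it.

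Re-weighting does not help. By H\"older, $\min_{w}(\sum_{\bbeta}w_{\bbeta})^{1/2}\sum_{\bbeta}\|\bbeta\|_2^{-k}w_{\bbeta}^{-1/2}=(S_{2k/3})^{3/2}$. This yields only the rate $(\varepsilon n)^{-2k/(3d)}$, with a transition at $k=3d/2$ instead of $k=d$.

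Your closing suggestion (Parseval plus Cauchy--Schwarz) repairs the pure-noise term but not the fitting term. Cauchy--Schwarz requires the weighted $\ell_2$ discrepancy $\Gamma=\bigl(\sum_{\bbeta}\|\bbeta\|_2^{-2k}|\langle T_{\bbeta},p_X-q\rangle|^2\bigr)^{1/2}$ to be small. However, $\ell_1$-optimality of $q$ only gives $\sum_{\bbeta}\lambda_{\bbeta}|\langle T_{\bbeta},q\rangle-\tilde m_{\bbeta}|\le\sum_{\bbeta}\lambda_{\bbeta}|g_{\bbeta}|$. Such an $\ell_1$ bound controls $\Gamma$ only through the same linear sum of standard deviations; this is attained when the error concentrates on one coordinate. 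You therefore lose a factor $K^{d/2}$.

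\paragraph{The missing idea and the paper's route.} The fitting step must be an $\ell_2$ projection matched to the Parseval ellipsoid. The paper does this in four steps.
\begin{enumerate}
\item It rounds the data to the grid first, so $p_{\widetilde X}$ is exactly feasible.
\item It perturbs the normalized moments by $\eta_K\sim N(0,\|K\|_2^{k}\sigma^2)$, with $\sigma^2\propto 2^dS_k\log(1.25/\delta)/(\varepsilon n)^2$.
\item It solves the \emph{weighted least-squares} problem with weights $\|K\|_2^{-2k}$ over the simplex on the grid. Optimality and feasibility then give $\Gamma^2\le 4\sum_K\|K\|_2^{-2k}\eta_K^2+2\eta$.
\item Hence $\E\Gamma^2\le 4\sigma^2S_k+2\eta$, so $\E\Gamma\lesssim 2^{d/2}S_k\sqrt{\log(1.25/\delta)}/(\varepsilon n)$. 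Because variances add, $S_k$ enters linearly, and the sum $\sum_r r^{d-1-k}$ you wanted actually appears.
\end{enumerate}
This is combined with $d_k(p,q)\le 2dC_k^{\mathrm{Jac}}m^{-k}+\sqrt{C'_{d,k}}\,\Gamma$. That bound uses $\sum_K\|K\|_2^{2k}c_K^2\le C'_{d,k}$. It also uses the fact that the Jackson approximant's coefficients satisfy $|\tilde c_K|\le|c_K|$, because the kernel is nonnegative with unit mass. You use $\hat f_{\bbeta}$ for the coefficients of $P_Kf$ without noting this.

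Finally, your remark that the decay holds ``only in an averaged sense, not pointwise'' is backwards. The weighted $\ell_2$ bound implies the pointwise one. What matters is pairing the stronger $\ell_2$ form with an $\ell_2$ fitting criterion.
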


The full differentially private synthetic data generation procedure is presented in Algorithm~\ref{alg: main}. In particular, the size of the synthetic dataset $Y$ is  $O(n)$ when $k<d/2$ and is $O(n^2)$  in all cases (see Remark~\ref{rmk:choiceofm} for detailed discussion).  

 To obtain a nontrivial utility bound in Theorem~\ref{thm:main}, we require $k,d=o(\log n)$.
 The computational complexity is
    $O\left((\varepsilon n)^{(k+2)\min\{k,d\}}\right)$, which is polynomial in $n$ for fixed $k,d$.
We refer the reader to Appendix~\ref{sec:complexity} for further discussion. This complexity is consistent with the hardness result of \citep{ullman2020pcps}: under standard cryptographic assumptions, they show that there is no polynomial-time algorithm for generating differentially private synthetic data accurate for all two-way marginal when $n$ is polynomial in $d$. 

  In terms of the privacy parameter $\varepsilon$, for simplicity, we assume \(\varepsilon<1\) in Theorem~\ref{thm:main}. When \(\varepsilon\geq 1\), the only modification is the choice of the variance in the Gaussian mechanism step of Algorithm~\ref{alg: main}. Based on Lemma~\ref{lem: gaussian_mechanism}, the same analysis applies, differing only in its dependence on $\varepsilon$ in the utility bound.
  
\begin{remark}[Comparison with \citep{he2023algorithmically}] When $k=1$, Theorem~\ref{thm:main} recovers the convergence rate established in \citep{he2023algorithmically} for the $1$-Wasserstein distance due to \eqref{eq:Wd_equivalence}. The only difference is an additional factor of $d$, which arises from the underlying choice of metric: \citep{he2023algorithmically} considers $[-1,1]^d$ equipped with the $\ell_\infty$ metric, whereas our analysis is carried out on $[-1,1]^d$ endowed with the $\ell_2$ metric.
\end{remark}

Theorem~\ref{thm:main} uncovers a sharp phase transition in the role of smoothness: once the smoothness order exceeds the dimension ($k>d$), additional smoothness no longer yields further improvements in the error rate. In particular, for multivariate polynomial queries, including low-order moments and correlations, linear statistics, and quadratic functionals, the theorem establishes an error rate of $O_{k,d}(n^{-1})$ by choosing the smoothness parameter $k>d$. In this regime, the sample-size exponent becomes one; the associated
constants may still depend on $k$ and $d$.
This strictly improves the $O_{k,d}\left(n^{-\frac{2k}{3d+2k}}\right)$ rate in \citep[Theorem 22]{wang2016differentially} when $k=1$, and recovers \citep[Theorem 41]{musco2025sharper}.

The next theorem shows that the rate achieved in Theorem \ref{thm:main} is minimax optimal up to a factor in $k,d$.

\begin{theorem}[Minimax lower bound]\label{thm:main2}  For any constant $c_1>1$, assume $\varepsilon \in (\frac{c_1}{n},c_1), \delta\in [0, c_2(e^\varepsilon-1))$ for some constant $c_2$ depending only on $c_1$.  Then the following holds:
    \begin{equation}
\inf_{\mathcal{M}\text{ is } (\varepsilon,\delta)\text{-DP}}\; \sup_{X\in \Omega^n} \E \big[d_k(p_X,p_{\mathcal{M}(X)})\big]
\geq C_{k} (n\varepsilon)^{-\min\{1,k/d\}},
\end{equation}
where the infimum is over all $(\varepsilon,\delta)$-algorithms that output synthetic data $\mathcal M(X)\subseteq \Omega$, and $C_{k}$ is a constant depending only on $k$ and $c_1$.
\end{theorem}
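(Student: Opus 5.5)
We prove the lower bound with a packing argument combined with the group-privacy form of $(\varepsilon,\delta)$-DP, in the spirit of the Wasserstein lower bound of \citep{boedihardjo2024private}. The rate $(n\varepsilon)^{-\min\{1,k/d\}}$ is the maximum of two bounds, $(n\varepsilon)^{-1}$ and $(n\varepsilon)^{-k/d}$, so it suffices to prove each separately. Both are derived from the following principle. Suppose we have datasets $X^{(1)},\dots,X^{(N)}$ such that any two differ in at most $t$ entries, and suppose the sets $B_i=\{Y: d_k(p_{X^{(i)}},p_Y)<\eta/2\}$ are pairwise disjoint because $d_k(p_{X^{(i)}},p_{X^{(j)}})\ge\eta$. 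Then any mechanism with $\sup_X\E d_k\le \eta/8$ satisfies $\mathbb P(\mathcal M(X^{(i)})\in B_i)\ge 3/4$ by Markov's inequality. Group privacy gives
\[
\mathbb P(\mathcal M(X^{(1)})\in B_i)\ge e^{-t\varepsilon}\bigl(3/4 - t e^{t\varepsilon}\delta\bigr).
\]
Summing over $i$ and using disjointness yields $1\ge N e^{-t\varepsilon}(3/4-te^{t\varepsilon}\delta)$. This is a contradiction once $t\varepsilon\le c\log N$ and $t e^{t\varepsilon}\delta$ is small. The hypothesis $\delta<c_2(e^\varepsilon-1)$, together with $t\varepsilon=O(\log N)$, is what controls the $\delta$ term. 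Because $e^{t\varepsilon}\le N^{c}$, we need $t\delta N^{c}\lesssim 1$; I expect this to require choosing $c$ and $c_2$ carefully, and if it fails I would instead take $t\varepsilon\asymp 1$.

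For the $(n\varepsilon)^{-1}$ bound, which is the relevant one when $k\ge d$, use two points. Let $X$ consist of $n$ copies of $0$, and let $X'$ replace $t\asymp 1/\varepsilon$ of them by $e_1$; this is valid since $\varepsilon>c_1/n$ gives $t\le n$. Take $f(\bx)=c\,\phi(x_1)$ with $\phi$ a fixed smooth function satisfying $\phi(0)=0$ and $\phi(1)=1$, scaled so that $f\in\mathcal F_k$ with scaling constant depending only on $k$. Then $d_k(p_X,p_{X'})\ge c_k t/n\asymp c_k/(n\varepsilon)$. With $N=2$, group privacy at distance $t\varepsilon\asymp 1$ gives constant overlap of the output distributions, and the standard two-point (Le Cam) argument finishes, the $\delta$ term being absorbed since $t\delta\lesssim c_2 t(e^\varepsilon-1)\lesssim c_2$.

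For the $(n\varepsilon)^{-k/d}$ bound, which is the relevant one when $k<d$, use a bump-function packing. Partition $[-1,1]^d$ into $m^d$ cubes of side $2/m$ with centers $z_1,\dots,z_{m^d}$, and let $\psi$ be a fixed $C^\infty$ bump supported in the unit cube. Set
\[
m\asymp (n\varepsilon)^{1/d},
\]
so that there are $M=m^d\asymp n\varepsilon$ cells. For each $S\subseteq[M]$ with $|S|=M/2$, define $X^{S}$ to place $n/M\asymp 1/\varepsilon$ points at each center $z_j$ with $j\in S$, and, to keep the dataset size equal to $n$, to place the remaining points at a fixed location away from all cells. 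By Gilbert--Varshamov there is a family of $N=e^{cM}$ such sets with pairwise symmetric differences of size at least $M/8$. Any two of these datasets differ in at most $t\le n$ entries, and $t\varepsilon\lesssim n\varepsilon\asymp M\asymp\log N$. To separate $X^S$ and $X^{S'}$, test with
\[
f=c_k m^{-k}\sum_{j\in S\setminus S'}\psi\bigl(m(\cdot-z_j)\bigr).
\]
Because the bumps have disjoint supports, every derivative of order at most $k$ is bounded by $c_k$, so $f\in\mathcal F_k$. Then
\[
\langle f,p_{X^S}-p_{X^{S'}}\rangle\gtrsim m^{-k}\cdot\frac{M}{8}\cdot\frac{1}{M}\asymp m^{-k}=(n\varepsilon)^{-k/d}.
\]
Plugging this into the packing principle gives the bound. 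The constants in the packing size must be tuned so that $t\varepsilon\le c\log N$ with a small $c$, which is done by placing only a constant fraction of the mass in the cells. All constants depend only on $k$ through $\psi$, and on $c_1$.

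I expect the main obstacle to be handling $\delta$ in the group-privacy step for the packing. The group size $t$ is of order $n$ and $e^{t\varepsilon}$ is exponential in $M$, so the naive bound $te^{t\varepsilon}\delta$ is useless unless $\delta$ is tiny. My first approach is the standard $(\varepsilon,\delta)$ packing lower-bound technique: choose $t\varepsilon\le \tfrac12\log N$ and use the condition $\delta\le c_2(e^\varepsilon-1)$, which gives $\delta\lesssim\varepsilon$, as in the $\ell_1$-mean lower bounds. If that fails, I would fall back on a fingerprinting or coordinate-wise (Assouad-type) argument. In that version each cell is its own binary hypothesis at group distance $1/\varepsilon$, so the per-cell $\delta$ contribution is only $\delta/\varepsilon\le c_2$, and summing the per-cell errors recovers the same $(n\varepsilon)^{-k/d}$ rate. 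I would likely adopt this Assouad route from the outset, since it avoids the exponential factor entirely.
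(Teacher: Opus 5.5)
Your final plan, the Assouad route in your last paragraph together with the two-point bound for $k\ge d$, is essentially the paper's proof. The packing argument that occupies most of the proposal, however, has a genuine gap for the stated range of $\delta$, and no choice of $c$ or $c_2$ repairs it.

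Since $\|f\|_\infty\le 1$ on $\mathcal F_k$, two datasets at Hamming distance $t$ satisfy $d_k(p_X,p_{X'})\le 2t/n$. So separating packing elements at scale $(n\varepsilon)^{-k/d}$ forces $t\varepsilon\gtrsim (n\varepsilon)^{1-k/d}\gg 1$ when $k<d$; in your construction, $t\varepsilon\asymp n\varepsilon$. Group privacy then carries the slack $\delta\frac{e^{t\varepsilon}-1}{e^{\varepsilon}-1}$. For $\delta$ near the allowed maximum $c_2(e^{\varepsilon}-1)$, this is about $c_2(e^{t\varepsilon}-1)$, which is exponentially large. The inequality $1\ge Ne^{-t\varepsilon}(3/4-\delta_t)$ is then vacuous. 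The argument only works when $\delta\lesssim (e^\varepsilon-1)e^{-t\varepsilon}$, i.e.\ $\delta$ exponentially small in $n\varepsilon$. The condition $\delta\lesssim\varepsilon$, which suffices in two-point or per-coordinate arguments, does not help here. Your other patch, $t\varepsilon\asymp 1$, caps the separation at $2t/n=O(1/(n\varepsilon))$, so it only reproduces the two-point rate. So the Assouad route is not a fallback; it is necessary.

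The Assouad route works because group privacy is paid only per coordinate. Flipping $\theta_i$ moves $h\asymp 1/\varepsilon$ points, so the slack is at most $c_2(e^{h\varepsilon}-1)=O(c_2)$. The pairwise total variation is then at most $\tanh(h\varepsilon/2)\bigl(1+\frac{2\delta}{e^\varepsilon-1}\bigr)<1$ once $c_2$ is small in terms of $c_1$. Your ``$\delta/\varepsilon\le c_2$'' holds only up to such a $c_1$-dependent factor.

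To turn the two-sentence sketch into a proof, you need the paper's ingredients:
\begin{itemize}
\item A dataset family indexed by $\theta\in\{\pm1\}^M$, in which every cell toggles its $h$ movable points between a zero of its bump and a point where the bump is $\gtrsim m^{-k}$. There is no point of $\Omega$ ``away from all cells'', since the cells tile $[-1,1]^d$. Surplus points must therefore sit at a common zero of all bumps (e.g.\ a cell corner), or at the cell centers with a bump vanishing there, as in the paper.
\item The bound $d_k(p_X,p_Z)\ge\sum_i\bigl|\int f_i\,\dd(p_X-p_Z)\bigr|$, obtained from signed sums $\sum_i s_i f_i\in\mathcal F_k$; these stay in $\mathcal F_k$ because the supports are disjoint.
\item Convexity of total variation, to pass from pairs of datasets to the mixtures $Q_{(i,+)},Q_{(i,-)}$ over the other coordinates.
\item A per-coordinate coupling (Le Cam) step.
\end{itemize}

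The constraint $Mh\le n$ gives $m^d\lesssim n\varepsilon$. Summing $M$ contributions of size $m^{-k}/(n\varepsilon)$ gives $m^{d-k}/(n\varepsilon)$, which is optimized exactly as in the paper. Your two-point bound for $k\ge d$ is the $M=1$ instance of this scheme (the paper instead takes $m$ constant) and is fine as written.
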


Our minimax lower bound is more general than the lower bound in
\citep{boedihardjo2024private}, which studies $\varepsilon$-DP algorithms with
utility measured by the $W_1$ distance under the $\ell_\infty$ metric on
$\Omega$. Since functions in $\mathcal F_1$ are $d$-Lipschitz with respect to the
$\ell_\infty$ metric, specializing our result to $\delta=0$ and $k=1$ recovers
the $n^{-1/d}$ Wasserstein lower bound of \citep{boedihardjo2024private}, up to a
factor of $d$. Since obtaining a non-trivial bound
$n^{-1/d}=O(1)$ requires $d=O(\log n)$, our result recovers \citep{boedihardjo2024private} up to a $\log n$ factor.
Finally, our proof strategy is fundamentally different from the packing-number
argument used in \citep[Corollary~9.4]{boedihardjo2024private}.

Theorem~\ref{thm:main2} covers a wide range of $(\varepsilon,\delta)$ used in practice. In particular, $\varepsilon$ can be a constant larger than 1 and $\delta$ can be of order 1. This is in contrast to many existing minimax lower bounds in the literature with restricted ranges of $(\varepsilon,\delta)$.
The restriction on $\delta$ in minimax lower bounds is typically not inherent to the definition of
$(\varepsilon,\delta)$-DP; rather, it reflects (i) the privacy regime one intends to model and
(ii) how the proof composes privacy losses across many individuals, steps, or hypotheses.
Broadly, prior techniques fall into two  regimes:

\begin{itemize}
    \item Vanishing $\delta$.
    In fingerprinting-code and tracing-based lower bounds for private query answering
    (e.g., \citep{bun2018fingerprintingcodespriceapproximate,steinke2015pureapproximatedifferentialprivacy}),
    the argument repeatedly applies approximate DP along a long adaptive process.
    As a result, the $\delta$-failure probability can accumulate (typically via union bounds) across many steps,
    and one therefore requires $\delta \ll 1/n$ to keep the overall failure probability negligible.
    A related requirement appears in the estimation lower-bound framework of \citep{cai2020costprivacyoptimalrates},
    where a tracing attack combined with a group-privacy/union-bound step leads to the assumption
    $\delta=o(1/n)$.

    \item {$\varepsilon$-relative $\delta$.}
    In DP-Le Cam and DP-Assouad \citep{acharya2021differentially}, approximate DP enters primarily through an additive slack term that
    scales with the Hamming radius of the reduction (via group privacy), so it suffices to control $\delta$
    \emph{relative} to $\varepsilon$.
    For example, \citep{acharya2021differentially} yields a distinguishability contribution of the form $D\delta$;
    when the construction takes $D=\Theta(1/\varepsilon)$, assuming $\delta=O(\varepsilon)$ keeps this term bounded.
    Likewise, \citep[Theorem~1.3]{kamath2023newlowerboundsprivate}  assumes $\delta\le \varepsilon$
    so that the accumulated approximate-DP slack remains $O(1)$.
\end{itemize}
    Our lower bound falls into this second regime: we only need privacy leakage to remain uniformly controlled
    throughout the reduction, and the condition $\delta \lesssim (e^{\varepsilon}-1)$ in Theorem~\ref{thm:main2}
    ensures that the accumulated slack does not
    dominate.

\paragraph{Technical overview}

In the work \citep{musco2025sharper}, the Chebyshev moment matching method captures information from a dataset $X$ via the Chebyshev moments of $p_X$. By perturbing these moments, one can construct a private probability measure $q$. The utility loss $W_1(p_X,q)$ is controlled through the analytic properties of Chebyshev polynomial approximations for 1-Lipschitz functions; in particular, Jackson's theorem~\citep{jackson1930theory} is a key ingredient in achieving the $n^{-1/d}$ rate in~\citep{musco2025sharper}. This connection between moment matching and Jackson's theorem was also recently explored by \citep{amini2026wasserstein} in the context of empirical measure concentration under the Wasserstein distance.

In the proof of Theorem~\ref{thm:main}, we leverage the $k$-smoothness of the function class $\mathcal F_k$ to achieve Chebyshev polynomial approximation rates superior to those provided by Jackson's theorem for Lipschitz functions. In our setting, a generalized Jackson's theorem for $k$-smooth functions is needed, and we provide an improved estimate based on \citep{jackson1912approximation} tailored to our setting in  Lemma~\ref{lem:Jackson}.  In addition, we derive a higher-order global coefficient-decay result for multivariate Chebyshev expansions. Our primary technical contribution is an argument that translates these smoothness assumptions into global energy decay bounds (Theorem \ref{thm:main-d}). While this result is central to our utility analysis of DP-synthetic data in Theorem~\ref{thm:main}, it may also be of independent interest for broader distribution approximation problems. Finally, in contrast to \citep{musco2025sharper}, which outputs a private probability measure, our algorithm produces an explicit differentially private synthetic dataset Y. This representation is more directly usable in downstream applications, and we explicitly quantify the size of Y in the proof of Theorem~\ref{thm:main}.

  Our analysis for $(\varepsilon,\delta)$-DP relies on the $\ell_2$-sensitivity analysis based on the Gaussian mechanism, which does not directly apply to the $\ell_1$-sensitivity analysis based on the Laplacian mechanism \citep{dwork2014algorithmic}. Achieving an optimal rate for $\varepsilon$-DP synthetic data generation for $k$-smooth queries remains an open question.

In Theorem~\ref{thm:main2}, we establish a minimax lower bound for synthetic data mechanisms by tailoring a DP-Assouad (hypercube) reduction \citep{acharya2021differentially} to the $d_k$-distance, and the main novelties are the extra ingredients needed to make that reduction both privacy-aware and smoothness-aware. First, we develop an explicit way to turn approximate-DP indistinguishability into a total-variation distance control for outputs when two input datasets differ in many entries. Second, we construct a family of localized smooth bump queries supported on disjoint grid cells, and a matching family of hard datasets. Finally, we connect privacy to utility by a coupling-based Assouad inequality \citep{yu1997assouad}. Optimizing the grid resolution then yields the lower-bound rate and explains the phase transition where additional smoothness stops improving the achievable accuracy.

 \paragraph*{Organization of the paper}
 The rest of the paper is organized as follows. We provide preliminary results on differential privacy and Chebyshev polynomials in Section~\ref{sec: preliminaries}. In Section~\ref{sec:IPM}, we establish a general theorem connecting the integral probability metric induced by $\mathcal F_k$ and Chebyshev moments between two probability measures.
 In Section~\ref{sec:alg}, we present our DP-synthetic data generation algorithm and analyze its privacy and utility guarantees stated in Theorem~\ref{thm:main}. 
 The proof of the minimax lower bound (Theorem~\ref{thm:main2}) is given in Appendix~\ref{sec:minimax}. Additional proofs are deferred to the Appendices~\ref{sec:isometry} to \ref{sec: convergence_rate}. The computational complexity of Algorithm~\ref{alg: main} is analyzed in Appendix~\ref{sec:complexity}.

\section{Preliminaries}
\label{sec: preliminaries}

\subsection{Differential privacy}

The following lemma provides a simple yet useful algorithm to guarantee $(\varepsilon, \delta)$-differential privacy. It suffices to focus on the \emph{sensitivity} of our target and adding Gaussian noise to the true value.

\begin{lemma}[Gaussian Mechanism \citep{dwork2014algorithmic, balle2018improving,vinterbo2021closedformscalebound}]
\label{lem: gaussian_mechanism}
    Let $f:\Omega^n \to \R^d$ with $\ell^2$-sensitivity defined as 
    \[\Delta_{2,f}^2 \coloneqq \sup_{X,X'\text{ adjacent}} \|f(X)-f(X')\|_2^2.\]
    Then the randomized algorithm $\mathcal M: X\mapsto f(X)+Z$ with $Z\sim N(0,\sigma^2 I_d)$ satisfies $(\varepsilon,\delta)$-differential privacy if
    \[\sigma = \frac{\sqrt2\Delta_{2,f}}{\varepsilon}\cdot \sqrt{\log\frac1{4\delta(1-\delta)} + \varepsilon} \quad \text{for }\delta < 1/2.\]
    Moreover, if we assume both $\varepsilon, \delta\in (0,1)$, we can take 
    $\sigma = \frac{\Delta_{2,f} \sqrt{2\log(1.25/\delta)}}{\varepsilon}$.
\end{lemma}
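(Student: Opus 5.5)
The plan has two parts: reduce to a one-dimensional problem via rotation invariance of the Gaussian, then verify the two closed-form choices of $\sigma$ by Gaussian tail estimates. Fix adjacent datasets $X,X'$ and set $v=f(X)-f(X')$, so $\|v\|_2\le\Delta_{2,f}$. The outputs $\mathcal M(X)\sim N(f(X),\sigma^2I_d)$ and $\mathcal M(X')\sim N(f(X'),\sigma^2I_d)$ differ by a translation. Rotating coordinates so that $v$ lies along $e_1$, the remaining $d-1$ coordinates are identically distributed under both hypotheses. By post-processing, it therefore suffices to compare $N(0,\sigma^2)$ with $N(\|v\|_2,\sigma^2)$ on $\mathbb R$. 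Write $\mu=\|v\|_2/\sigma\le \Delta_{2,f}/\sigma$. The privacy loss random variable is
\[
L=\log\frac{\dd P_X}{\dd P_{X'}}(\mathcal M(X)).
\]
A direct computation, expanding $\|Z+v\|^2-\|Z\|^2$, shows that $L\sim N(\mu^2/2,\mu^2)$.

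For the second (classical) choice $\sigma=\Delta_{2,f}\sqrt{2\log(1.25/\delta)}/\varepsilon$, I would use the sufficient condition
\[
\mathbb P(L>\varepsilon)\le\delta\ \Longrightarrow\ (\varepsilon,\delta)\text{-DP}.
\]
It follows by splitting any event $S$ into $S\cap\{L\le\varepsilon\}$ and its complement: on the first piece the densities satisfy $p_X\le e^{\varepsilon}p_{X'}$. One then needs
\[
\mathbb P\big(N(0,1)>t\big)\le\delta,\qquad t=\frac{\varepsilon}{\mu}-\frac{\mu}{2}.
\]
The quantity $t$ is decreasing in $\mu$, so the worst case is $\mu=\Delta_{2,f}/\sigma$. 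With $c=\sqrt{2\log(1.25/\delta)}$ this gives $t=c-\varepsilon/(2c)$. I would then follow the Dwork--Roth computation: apply the Mills-ratio bound $\mathbb P(N>t)\le \frac{1}{t\sqrt{2\pi}}e^{-t^2/2}$ and use $\varepsilon<1$. Together these give $t^2\ge c^2-\varepsilon\ge 2\log(1.25/\delta)-1$, and hence a tail of at most $\delta$ after the routine numerical check.

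For the first choice I would instead use the exact characterization of \citep{balle2018improving}, since the crude bound $\mathbb P(L>\varepsilon)$ loses a constant factor there. That result says the Gaussian mechanism is $(\varepsilon,\delta)$-DP if and only if
\[
\Phi\!\Big(\frac{\mu}{2}-\frac{\varepsilon}{\mu}\Big)-e^{\varepsilon}\,\Phi\!\Big(-\frac{\mu}{2}-\frac{\varepsilon}{\mu}\Big)\le\delta,
\]
and the left side is increasing in $\mu$, so again only $\mu=\Delta_{2,f}/\sigma$ matters. Plugging in $\sigma^2/\Delta_{2,f}^2=2(a+\varepsilon)/\varepsilon^2$ with $a=\log\frac{1}{4\delta(1-\delta)}$, the first argument satisfies $t^2\ge 2a+\varepsilon$. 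I would bound the difference of the two $\Phi$ terms using a sharp two-sided Gaussian tail inequality, for instance the Chernoff-type estimate $\Phi(-t)\le \tfrac12 e^{-t^2/2}$ combined with a matching lower bound for the subtracted term. The goal is to reach $\delta$ exactly through the identity $4\delta(1-\delta)=e^{-a}$. This is the closed form of \citep{vinterbo2021closedformscalebound}.

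The main obstacle is this last inequality. The naive tail bound gives only $2\delta(1-\delta)e^{-\varepsilon/2}$, so the subtracted $e^{\varepsilon}\Phi(\cdot)$ term must genuinely be exploited. If a clean self-contained argument proves delicate, I would fall back on citing the Balle--Wang criterion together with Vinterbo's bound for that step.
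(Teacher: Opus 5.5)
The paper does not prove this lemma; it quotes it from the cited works, so citing Balle--Wang and Vinterbo for the first choice of $\sigma$ matches what the paper does. Your reduction to the one-dimensional privacy loss $L\sim N(\mu^2/2,\mu^2)$, the sufficient condition $\mathbb P(L>\varepsilon)\le\delta$, and the monotonicity in $\mu$ are all correct.

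In the classical case the ``routine numerical check'' has a small hole. The Mills-ratio bound needs $t>0$ and is weak for small $t$. For $\delta$ near $1$, however, $c$ is close to $\sqrt{2\log 1.25}\approx 0.67$, so $t=c-\varepsilon/(2c)$ can be slightly negative. It is simpler to use $1-\Phi(t)\le\tfrac12 e^{-t^2/2}$ for $t\ge 0$. Together with $t^2\ge c^2-1$ this already gives $\tfrac{e^{1/2}}{2.5}\,\delta<\delta$. If $t<0$, then $\delta>1.25e^{-1/4}>0.97$, and there $1-\Phi(t)<0.54<\delta$.

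The genuine gap is in the first choice of $\sigma$. Your diagnosis of the obstacle is wrong, and the route you sketch cannot close. The factor $2$ is lost in the Chernoff estimate $1-\Phi(t_1)\le\tfrac12 e^{-t_1^2/2}$, not in the sufficient condition $\mathbb P(L>\varepsilon)\le\delta$.

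Subtracting a lower bound on $e^{\varepsilon}(1-\Phi(t_2))$, with $t_2=\varepsilon/\mu+\mu/2$, cannot recover that factor. Since $t_2^2-t_1^2=2\varepsilon$, Mills' ratio gives $e^{\varepsilon}(1-\Phi(t_2))\le e^{-t_1^2/2}/(t_2\sqrt{2\pi})$. So the best your route can reach is $e^{-t_1^2/2}\bigl(\tfrac12-\tfrac{1}{t_2\sqrt{2\pi}}\bigr)$. At the worst-case $\mu=\Delta_{2,f}/\sigma$ this exceeds $\delta$ when $\delta$ and $\varepsilon$ are small; for example $\delta=10^{-6}$, $\varepsilon=0.1$ gives about $1.6\,\delta$.

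The missing idea is the P\'olya-type tail bound
\[
1-\Phi(x)\ \le\ \tfrac12\Bigl(1-\sqrt{1-e^{-x^2/2}}\Bigr),\qquad x\ge 0 .
\]
It follows from
\[
(2\Phi(x)-1)^2=\mathbb P(|N_1|\le x)\,\mathbb P(|N_2|\le x)\ge \mathbb P(N_1^2+N_2^2\le x^2)=1-e^{-x^2/2},
\]
because the disk of radius $x$ lies inside the square $[-x,x]^2$.

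You already have $t_1^2/2\ge a=\log\frac{1}{4\delta(1-\delta)}$. With the P\'olya bound this gives
\[
\mathbb P(L>\varepsilon)\le\tfrac12\bigl(1-\sqrt{1-4\delta(1-\delta)}\bigr)=\tfrac12\bigl(1-(1-2\delta)\bigr)=\delta \quad\text{for } \delta<1/2 .
\]
This is exactly where the expression $4\delta(1-\delta)$ in the statement comes from. So the same sufficient condition you used for the classical $\sigma$ settles both choices. Neither the Balle--Wang exact characterization nor the subtracted $e^{\varepsilon}\Phi(\cdot)$ term is needed.
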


\subsection{Normalized multivariate Chebyshev polynomials}
We will use multivariate Chebyshev polynomials in our algorithm and its analysis. Let us start with the 1-dimensional case. On $[-1,1]$ let
\[
  d\mu(x)=\frac{dx}{\pi\sqrt{1-x^2}},
\]
and on $[-1,1]^d$ the product probability measure $\mu_d=\mu^{\otimes d}$.
Write $\btheta=(\theta_1,\dots,\theta_d)\in[0,\pi]^d$ and $\cos\btheta=(\cos\theta_1,\dots,\cos\theta_d)$.
For $n\ge0$ define the \textit{orthonormal  Chebyshev basis}
\[
  \overline T_0(\bx)\equiv1; \qquad \overline T_n(\bx)=\sqrt2\,\cos(n\arccos x),\ \forall n\ge1.
\]
For a multi-index $K=(k_1,\dots,k_d)\in\mathbb{N}^d$ define the normalized Chebyshev polynomial
\[
  \overline T_K(\bx)=\prod_{i=1}^d \overline T_{k_i}(x_i).
\]
Let $\psi_0(\theta)\equiv1$ and $\psi_n(\theta)=\sqrt2\cos(n\theta)$ for $n\ge1$, and set
\[
  \psi_K(\btheta)=\prod_{i=1}^d \psi_{k_i}(\theta_i).
\]
All $L^2$ norms and inner products of the functions in $\btheta$-space are taken with respect to the probability measure $\pi^{-d}d\btheta$ on $[0,\pi]^d$:
\[
\ip{u}{v}:=\frac1{\pi^d}\int_{[0,\pi]^d} u(\btheta)\,v(\btheta)\,\dd\btheta,\quad
\norm{u}_2^2:=\ip{u}{u}, \quad \forall u=u(\btheta), v=v(\btheta).
\]

The following lemma reveals the relation between the two $L^2$ spaces we defined. It indicates that the normalized Chebyshev polynomials $\{\overline{T}_K\}$ serve as a role of Fourier basis on $[-1,1]$ equipped with weight measure $\mu_d$. The proof of Lemma~\ref{lem:isometry} is in Appendix~\ref{sec:isometry}.

\begin{lemma}[Isometry to cosine variables and orthonormal basis]\label{lem:isometry}
$\{\psi_K\}_{K\in\mathbb{N}^d}$ is an orthonormal basis of $L^2([0,\pi]^d)$. The linear map $U:L^2(\mu_d)\to L^2([0,\pi]^d, \pi^{-d}\dd\btheta)$, $(Uf)(\btheta)\coloneqq f(\cos\btheta)$, is an isometry:
\[
  \|Uf\|_2^2=\|f\|_{L^2(\mu_d)}^2.
\]
Moreover, $U(\overline T_K)=\psi_K$ and hence $\{\overline{T}_K\}_{K\in\mathbb{N}^d}$ is an orthonormal basis of $L^2(\mu_d)$. 
\end{lemma}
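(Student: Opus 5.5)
The plan is to reduce everything to the one-dimensional change of variables $x=\cos\theta$, then pass to the product structure, and finally use classical completeness of the cosine system on $[0,\pi]$.

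\emph{Step 1 (isometry).} Under the map $\bx=\cos\btheta$ from $[0,\pi]^d$ onto $[-1,1]^d$, the pushforward of $\pi^{-d}\dd\btheta$ is $\mu_d$. In one variable, $x=\cos\theta$ gives $\dd x=-\sin\theta\,\dd\theta$ and $\sqrt{1-x^2}=\sin\theta$ on $(0,\pi)$, so
\[
\frac{\dd x}{\pi\sqrt{1-x^2}}=\frac{\dd\theta}{\pi}.
\]
The $d$-dimensional statement follows by taking products (Fubini). Hence, for measurable $f$,
\[
\int_{[0,\pi]^d}|f(\cos\btheta)|^2\,\pi^{-d}\dd\btheta=\int|f|^2\,\dd\mu_d .
\]
This shows that $U$ is well defined on equivalence classes, linear, and norm-preserving.

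\emph{Step 2 (images of the basis).} For $n\ge1$ we have $\cos(n\arccos(\cos\theta))=\cos(n\theta)$ for $\theta\in[0,\pi]$. Therefore $U\overline T_n=\psi_n$, and also $U\overline T_0=\psi_0$. By the product structure, $U\overline T_K=\psi_K$.

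\emph{Step 3 (orthonormal basis in $\btheta$-space).} Orthonormality of $\{\psi_K\}$ reduces, by Fubini, to the one-dimensional identities
\[
\frac1\pi\int_0^\pi 2\cos m\theta\cos n\theta\,\dd\theta=\delta_{mn}\quad(m,n\ge1),
\]
together with $\frac1\pi\int_0^\pi\sqrt2\cos n\theta\,\dd\theta=0$ for $n\ge1$. These are elementary.

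Completeness is the main point. In one dimension, take $g\in L^2[0,\pi]$ orthogonal to every $\cos n\theta$. Extend $g$ evenly to $[-\pi,\pi]$. The extension is then orthogonal to all $\cos n\theta$, and trivially to all $\sin n\theta$ by parity. Completeness of the trigonometric system in $L^2[-\pi,\pi]$ forces $g=0$.

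For $d$ dimensions there are two routes. One is the standard fact that tensor products of orthonormal bases form an orthonormal basis of $L^2$ of the product measure space. The other is to argue directly: extend evenly in each coordinate to $[-\pi,\pi]^d$ and use completeness of the multivariate Fourier system $\{e^{iK\cdot\btheta}\}$. Any such exponential, paired with a function even in each variable, reduces to a combination of products of cosines, so orthogonality to all $\psi_K$ gives $g=0$.

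\emph{Step 4 (transfer back to $L^2(\mu_d)$).} The map $U$ is surjective, since $V g(\bx)=g(\arccos\bx)$ (applied coordinatewise) is an inverse: $\arccos$ is a bijection $[-1,1]\to[0,\pi]$, and the same change of variables shows $V$ is isometric. So $U$ is a unitary. A unitary carries an orthonormal basis to an orthonormal basis, so $\{\overline T_K\}=\{U^{-1}\psi_K\}$ is an orthonormal basis of $L^2(\mu_d)$.

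The only non-routine step is completeness in Step 3, which rests on the classical completeness of Fourier series.
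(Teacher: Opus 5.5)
Your proof is correct and follows the same route as the paper's: the change of variables $\bx=\cos\btheta$ pushes $\pi^{-d}\dd\btheta$ forward to $\mu_d$, you identify $U\overline T_K=\psi_K$, and you reduce orthonormality and completeness to the one-dimensional cosine system. You also supply details the paper leaves implicit, namely completeness via even extension and Fourier completeness, the tensor-product step in $d$ dimensions, and the inverse map $V$ showing that $U$ is unitary; note that surjectivity is not actually needed for the last step, since if $f\perp\overline T_K$ for all $K$ then $Uf\perp\psi_K$ for all $K$, so $Uf=0$ and hence $f=0$ by the isometry.
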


\section{Integral probability metric and Chebyshev moments}\label{sec:IPM}

For a multi-index $\balpha=(\alpha_1,\dots,\alpha_d)$, denote
$|\balpha|\coloneqq\sum_i\alpha_i$. For $K=(k_1,\dots,k_d)$, denote
 $\|K\|_2^2\coloneqq \sum_i k_i^2$. We provide an estimate of the coefficients in the Chebyshev expansion of a $k$-smooth function in $[-1,1]^d$. The proof of Theorem~\ref{thm:main-d} is in Appendix~\ref{sec:appendix_chebyshev}.

\begin{theorem}[Order-$k$ coefficient decay]
\label{thm:main-d}
    Let $d\ge1$, $k\in\mathbb{N}$, and $f\in C^k([-1,1]^d)\subset L^2(\mu_d)$. With the expansion
    \[
      f=\sum_{K\in\mathbb{N}^d} c_K\,\overline T_K,\qquad
      c_K=\ip{f}{\overline T_K}_{L^2(\mu_d)},
    \]
    there is 
    \begin{equation}\label{eq:final} 
      \sum_{K}\|K\|_2^{2k}\,c_K^2
      \;\le\; C_{d,k}'\,
      \Big(\max_{0\leq |\bbeta|\le k}\|\partial_x^\bbeta f\|_{L^\infty([-1,1]^d)}\Big)^2,
    \end{equation}
    where $C_{d, k}' 
    \le d^k e^{2k}(k!)^2$.
\end{theorem}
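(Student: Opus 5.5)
Via the isometry of Lemma~\ref{lem:isometry}, we work with $g:=Uf$, that is, $g(\btheta)=f(\cos\btheta)$ on $[0,\pi]^d$. Then $g=\sum_K c_K\psi_K$. Since $\psi_K$ is a product of cosines, $g$ extends to an even, $2\pi$-periodic function in each variable, and the $c_K$ are (up to the $\sqrt2$ normalizations) its Fourier cosine coefficients. For each coordinate $i$ and order $k$, $\partial_{\theta_i}^k\psi_K=\pm k_i^k\,\psi'_K$, where $\psi'_K$ is $\psi_K$ with $\cos(k_i\theta_i)$ possibly replaced by $\sin(k_i\theta_i)$; these are still orthonormal. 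Parseval (on the periodic extension) therefore gives
\[
\sum_K k_i^{2k}c_K^2=\|\partial_{\theta_i}^k g\|_2^2 .
\]
To bring in $\|K\|_2^{2k}$, we use the power-mean inequality $\|K\|_2^{2k}=(\sum_i k_i^2)^k\le d^{k-1}\sum_i k_i^{2k}$, which yields
\[
\sum_K\|K\|_2^{2k}c_K^2\le d^{k-1}\sum_{i=1}^d\|\partial_{\theta_i}^k g\|_2^2\le d^{k}\max_i\|\partial^k_{\theta_i}g\|_\infty^2 ,
\]
where the sup bound dominates the $L^2$ norm because we use a probability measure. One still has to justify Parseval for derivatives, i.e.\ that $g\in C^k$ on the torus. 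This holds because $g$ is a $C^k$ function of $\cos\theta_i$, which is smooth and periodic, so no boundary terms arise.

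It therefore remains to bound the one-variable derivatives $\partial_{\theta}^k[h(\cos\theta)]$, with the other variables frozen, in terms of $\max_{j\le k}\|h^{(j)}\|_\infty$, where $h=f(\dots,x_i,\dots)$. By Faà di Bruno,
\[
\frac{d^k}{d\theta^k}h(\cos\theta)=\sum_{\pi} h^{(|\pi|)}(\cos\theta)\prod_{B\in\pi}\cos^{(|B|)}(\theta),
\]
where the sum runs over set partitions $\pi$ of $\{1,\dots,k\}$. Each derivative of cosine is bounded by $1$, so the absolute value is at most $B_k\max_{j\le k}\|h^{(j)}\|_\infty$, with $B_k$ the $k$-th Bell number. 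Since $B_k\le k!$ (and more crudely $B_k\le e^k k!$), we get $\|\partial_{\theta_i}^k g\|_\infty\le e^k k!\max_{|\bbeta|\le k}\|\partial^\bbeta f\|_\infty$. Squaring gives the constant $C'_{d,k}\le d^k e^{2k}(k!)^2$.

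The main obstacle, mild as it is, is the Parseval step. One must carefully verify that differentiating the cosine-product basis $k$ times in $\theta_i$ produces an orthonormal family, with sines appearing for odd $k$, which are orthonormal on $[0,\pi]$ only after passing to the full torus $[-\pi,\pi]$. It also needs checking that the $k_i=0$ terms contribute nothing. If the sine/cosine bookkeeping becomes awkward, I would instead integrate by parts $k$ times directly in $c_K=\langle g,\psi_K\rangle$. The boundary terms vanish because odd derivatives of $g$ vanish at $\theta_i\in\{0,\pi\}$ (by evenness), while the sine factor kills the boundary terms at the remaining steps. Bessel's inequality for the resulting family then completes the argument.
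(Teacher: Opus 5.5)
Your proof is correct, but it follows a genuinely different route from the paper.

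The paper expands $\|K\|_2^{2k}=\sum_{|\balpha|=k}\frac{k!}{\balpha!}K^{2\balpha}$ by the multinomial theorem. It then applies Parseval to every mixed derivative $\partial_\btheta^\balpha g$. To bound these, it proves a multivariate chain-rule lemma, $\partial_\btheta^\balpha g=\sum_{\bbeta\le\balpha}P_{\balpha,\bbeta}(\btheta)\,\partial_x^\bbeta f(\cos\btheta)$, whose trigonometric coefficients are bounded by $|\balpha|!$ through an induction driven by Bernstein's inequality. Counting the $\prod_i(\alpha_i+1)\le(1+k/d)^d$ terms gives $C'_{d,k}=d^k(k!)^2(1+k/d)^{2d}$.

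You instead use the power-mean inequality $\|K\|_2^{2k}\le d^{k-1}\sum_i k_i^{2k}$. This loses the same factor $d^k$ but needs only the pure derivatives $\partial_{\theta_i}^k g$. A one-variable Fa\`a di Bruno expansion with $|\cos^{(j)}|\le 1$ then gives the bound $B_k$ directly, with no Bernstein inequality and no mixed-derivative bookkeeping. Since $B_k\le k!$, this yields $C'_{d,k}\le d^kB_k^2\le d^k(k!)^2$, so the extra $e^{k}$ you concede is not needed. Your right-hand side also involves only the pure partials $\partial_{x_i}^j f$ with $j\le k$, so you actually prove a slightly stronger statement. The same one-variable bound would also suffice in the paper's Jackson lemma, which only uses $\|\partial_{\theta_i}^k g\|_\infty$. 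What the paper's route gives in addition is pointwise control of all mixed derivatives $\partial_\btheta^\balpha g$, which this theorem does not need.

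One small correction: $\{\sqrt2\sin(n\theta)\}_{n\ge1}$ is already orthonormal (indeed complete) in $L^2([0,\pi],\pi^{-1}\dd\theta)$, so you do not need to pass to the full torus for that. As you note, Bessel's inequality after the boundary-term-free integrations by parts is all that is required.
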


We also need the following generalization of Jackson's Theorem for $k$-smooth functions in $[-1,1]^d$. This can be derived from \citep{jackson1912approximation,carothers1998short}, and we include the proof in Appendix~\ref{sec:Jackson}.
\begin{lemma}[Jackson's Theorem for $d$-dimensional $k$-smooth functions]\label{lem:Jackson}
Let $d,k\geq 1$ and $f\in \mathcal F_k$ with expansion $f(\bx)=\sum_K c_K\overline T_K(\bx)$. Then for every integer  $m\geq k$, there exists a tensor-product Chebyshev polynomial 
\[\tilde f_m(\bx)=\sum_{K\in\{0,1,\dots,m\}^d} \tilde c_K \overline T_K(\bx),\] such that 
\[ \|f-\tilde f_m\|_\infty \ \le\ \frac{d C_k^{\mathrm{Jac}}}{m^k}; \qquad|\tilde c_K|\leq |c_K|, \;\forall K,\]
where $C_k^{\mathrm{Jac}}\leq (Ck)^k$ is a constant depending only on $k$. 
\end{lemma}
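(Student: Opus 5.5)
Under the substitution $x_i=\cos\theta_i$ the problem becomes trigonometric approximation on the torus, and I would build $\tilde f_m$ as a tensor product of one-dimensional Jackson-type kernel smoothings. Set $g(\btheta)=f(\cos\btheta)$, which is even and $2\pi$-periodic in each $\theta_i$. Lemma~\ref{lem:isometry} gives $g=\sum_K c_K\psi_K$. Let $J_m(t)=\sum_{|j|\le m}\lambda_{m,j}e^{ijt}$ be a nonnegative even trigonometric polynomial of degree $m$ with $\int J_m=1$. Take it to be a normalized power of the Fejér kernel, $J_m\propto\big(\sin(rt/2)/\sin(t/2)\big)^{2s}$ with $s\approx k/2+1$ and $r\approx m/s$, so that
\[
\int |t|^k J_m(t)\,dt\le (Ck)^k m^{-k}.
\]
Define $\tilde g=g*(J_m\otimes\cdots\otimes J_m)$. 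In the cosine basis this multiplies $c_K$ by $\prod_i\lambda_{m,k_i}$, and the factor vanishes unless every $k_i\le m$. Since $J_m\ge0$ with unit mass, $|\lambda_{m,j}|\le1$, which gives $|\tilde c_K|\le|c_K|$. Evenness is preserved, so $\tilde f_m(\bx)=\tilde g(\arccos\bx)$ is a tensor Chebyshev polynomial of the required form.

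For the sup-norm error I would telescope one coordinate at a time:
\[
g-\tilde g=\sum_{i=1}^d\bigl(\mathcal J^{(1)}\cdots\mathcal J^{(i-1)}\bigr)\bigl(I-\mathcal J^{(i)}\bigr)g,
\]
where $\mathcal J^{(i)}$ is convolution by $J_m$ in $\theta_i$. Each $\mathcal J$ is an $L^\infty$ contraction, so it suffices to prove the one-dimensional estimate
\[
\|(I-\mathcal J^{(i)})g\|_\infty\le C_k^{\mathrm{Jac}}m^{-k}
\]
uniformly in the other variables. Summing over $i$ then produces the factor $d$.

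The one-dimensional estimate is the main obstacle. The $\theta$-derivatives of $h(\theta)=f(\cos\theta,\dots)$ are controlled by Faà di Bruno: $|h^{(k)}|\le B_k\max_{j\le k}\|f^{(j)}\|_\infty\le B_k\le k^k$, with $B_k$ the Bell number. This bound is acceptable since the target is $(Ck)^k$. The error $h-h*J_m$ is not controlled by $\int |t|^kJ_m$ directly unless one subtracts a Taylor polynomial. The moments $\int t^jJ_m$ for $1\le j<k$ do not vanish, except for odd $j$ by evenness. So the plain kernel only gives order $m^{-2}$, and a higher-order construction is needed.

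I would try Jackson's classical approach, replacing $\mathcal J$ by the combination $I-(I-\mathcal J)^{r}$ with $r\approx k/2$. Equivalently, this uses the $2r$-th order modulus of smoothness: expanding, $(I-\mathcal J)^{r}h$ is an average of finite differences $\Delta_t^{2r}h$-type terms weighted by kernel mass, each bounded by $|t|^{2r}\|h^{(2r)}\|$. One must then check that the multipliers still satisfy $|1-(1-\lambda_j)^r|\le1$. This needs $\lambda_j\in[0,1]$, which holds if $J_m$ is chosen as the square of a Fejér-type kernel so that its Fourier coefficients are nonnegative. If that fails, I would fall back on the iterated-Jackson argument of \citep{jackson1912approximation}, as presented in \citep{carothers1998short}: approximate $h$, then apply Jackson to $h^{(j)}$ and integrate, gaining $m^{-1}$ per step with constants multiplying to $(Ck)^k$. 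I would verify separately that this version still yields coefficientwise contraction.
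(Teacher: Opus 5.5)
Your first two paragraphs follow the paper's argument. It convolves coordinatewise with a nonnegative Jackson kernel and gets $|\tilde c_K|\le|c_K|$ from $|\lambda_{m,j}|\le 1$. It then telescopes over coordinates, using that each coordinate operator is an $L^\infty$ contraction; that is what produces the factor $d$. For $k\le 2$ this already proves the lemma. Your third paragraph is correct, and it identifies the step the paper takes on faith. The one-dimensional Jackson lemma in Appendix~\ref{sec:Jackson} asserts $\|g-g*J_1\|_\infty\le C_1^k m^{-k}$ for a nonnegative kernel $J_1$. But for $f(x)=x$, i.e.\ $g=\cos\theta$, the error equals $\frac1\pi\int_0^\pi(1-\cos y)J_1(y)\,\dd y\ge 1-\cos\frac{\pi}{m+2}$ for every normalized nonnegative cosine polynomial of degree $m$ (Fej\'er--Egerv\'ary--Sz\'asz). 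So positive kernels stop at $m^{-2}$, and a higher-order device is needed when $k\ge 3$. (A minor point: for odd $k$ your $\Delta_t^{2r}$ bound needs one first-order factor, since $h^{(k+1)}$ is not controlled.)

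The genuine gap is that your higher-order device is incompatible with the contraction step that gave you the factor $d$.
\begin{itemize}
\item The operator $\mathcal K=I-(I-\mathcal J)^r$ is convolution with $K_r=\sum_{j=1}^r\binom{r}{j}(-1)^{j+1}J_m^{*j}$. Its $L^\infty$ operator norm is $\|K_r\|_{L^1}\ge\int K_r=1$, with equality only if $K_r\ge 0$, which would bring back saturation.
\item By scaling, $K_r(t)\approx m\,\kappa(mt)$ for a fixed profile with $\int\kappa=1$ and $\int s^2\kappa=0$. Hence $\|K_r\|_{L^1}\ge 1+\gamma_k$ for all large $m$, with $\gamma_k>0$ independent of $m$.
\item Your telescoping therefore yields only $\sum_{i=1}^d(1+\gamma_k)^{i-1}Cm^{-k}$, which is exponential in $d$.
\end{itemize}
This loss is real, not slack in the estimate. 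Take an even $\phi\in C_c^\infty(\R)$ with $|\phi|\le 1$, $|\phi^{(j)}|\le B^j$ for $j\le k$, and $\int\phi\,\kappa\ge 1+\gamma_k/2$. Set $f(\bx)=(Bm)^{-k}\prod_{i=1}^d\phi\bigl(m(\arccos x_i-\tfrac{\pi}{2})\bigr)$. It lies in $\mathcal F_k$ after division by a constant depending only on $k$, since it is supported near $\bx=\0$, where $\arccos$ is smooth. Yet the tensor operator's error at $\bx=\0$ is at least of order $(Bm)^{-k}\bigl((1+\gamma_k/2)^d-1\bigr)$.

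Your fallback does not escape this. Jackson's iteration, made linear with positive convolutions, is $\mathcal Jh+\mathcal J(h-\mathcal Jh)=(I-(I-\mathcal J)^2)h$, the same non-positive operator. A version via best approximations gives up the coefficient contraction. So, as proposed, you get either a linear factor $d$ with rate $m^{-\min\{k,2\}}$, or rate $m^{-k}$ with a constant exponential in $d$. You do not get $dC_k^{\mathrm{Jac}}m^{-k}$ with $C_k^{\mathrm{Jac}}$ independent of $d$.

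What your ingredients do support is applying the higher-order combination to the whole positive operator $\mathcal P=\mathcal J^{(1)}\cdots\mathcal J^{(d)}$. Then $\tilde g=(I-(I-\mathcal P)^r)g$ has multipliers $1-\bigl(1-\prod_i\lambda_{m,k_i}\bigr)^r\in[0,1]$. Because $\mathcal P$ is a contraction commuting with $\partial_{\theta_i}$, the bound $\|(I-\mathcal P)u\|_\infty\le\frac{Cd}{m^2}\max_i\|\partial_{\theta_i}^2u\|_\infty$ iterates to $(Ck)^k d^{\lceil k/2\rceil}m^{-k}$. That is a factor $d^{\lceil k/2\rceil}$ in place of the claimed $d$.
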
 

Next, we apply the result of the Chebyshev moments of $k$-smooth functions and study the relationship between two types of discrepancies of probability measures $p$ and $q$: the integral probability metric $d_k(p,q)$ and the weighted $\ell_2$-difference of their Chebyshev moments. The proof of Theorem~\ref{thm: IPM_Chebyshev} is deferred to Appendix~\ref{sec:IPM_proof}.
\begin{theorem}[$d_k$-distance from Chebyshev moments]
    \label{thm: IPM_Chebyshev}
    For two probability measures $p,q$, let 
    \[\Gamma^2 \coloneqq \sum_{K\in \{0,...,m\}^d\setminus\{0\}} \frac{1}{\|K\|_2^{2k}} \left\vert \E_{X\sim p} \overline{T}_K(X) - \E_{X\sim q} \overline{T}_K(X)\right\vert ^2.\]
    Then  for any integer $k\geq 1$ and $m\geq k$,
    \[d_k(p, q) \coloneqq \sup_{f \in \mathcal{F}_k} \big| \langle f, p - q \rangle \big| \leq \frac{2C_k^{\mathrm{Jac}}\cdot d}{m^k} + \sqrt{C_{d,k}'}\cdot\Gamma,\]
    where $C_{d,k}'$ is the constant defined in Theorem~\ref{thm:main-d}. 
\end{theorem}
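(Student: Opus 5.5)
Fix $f\in\mathcal F_k$ with Chebyshev expansion $f=\sum_K c_K\overline T_K$. The plan is to split $f$ into a low-degree tensor-product polynomial plus a uniformly small remainder. By Lemma~\ref{lem:Jackson}, for the given $m\ge k$ there is $\tilde f_m=\sum_{K\in\{0,\dots,m\}^d}\tilde c_K\overline T_K$ with $\|f-\tilde f_m\|_\infty\le dC_k^{\mathrm{Jac}}/m^k$ and $|\tilde c_K|\le|c_K|$ for all $K$. Then
\[
|\langle f,p-q\rangle|\le |\langle f-\tilde f_m,p-q\rangle|+|\langle \tilde f_m,p-q\rangle|.
\]
Since $p,q$ are probability measures, $\int (f-\tilde f_m)\,\dd p$ and $\int (f-\tilde f_m)\,\dd q$ are each at most $\|f-\tilde f_m\|_\infty$ in absolute value. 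So the first term is at most $2dC_k^{\mathrm{Jac}}/m^k$, which is the first summand of the claimed bound.

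For the polynomial part, $\tilde f_m$ is a finite sum, so integration against $p-q$ is exact termwise:
\[
\langle \tilde f_m,p-q\rangle=\sum_{K\in\{0,\dots,m\}^d}\tilde c_K\bigl(\E_p\overline T_K-\E_q\overline T_K\bigr).
\]
The $K=0$ term vanishes because $\overline T_0\equiv1$ and both measures have mass one. For $K\neq0$, write $\tilde c_K\Delta_K=(\|K\|_2^{k}\tilde c_K)(\|K\|_2^{-k}\Delta_K)$, where $\Delta_K:=\E_p\overline T_K-\E_q\overline T_K$, and apply Cauchy--Schwarz. This gives
\[
|\langle \tilde f_m,p-q\rangle|\le\Bigl(\sum_{K\ne0}\|K\|_2^{2k}\tilde c_K^2\Bigr)^{1/2}\Gamma\le\Bigl(\sum_{K}\|K\|_2^{2k}c_K^2\Bigr)^{1/2}\Gamma,
\]
using $|\tilde c_K|\le|c_K|$. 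Theorem~\ref{thm:main-d} bounds the last sum by $C'_{d,k}\bigl(\max_{|\bbeta|\le k}\|\partial^\bbeta f\|_\infty\bigr)^2\le C'_{d,k}$, since $f\in\mathcal F_k$. Combining the two pieces and taking the supremum over $f\in\mathcal F_k$ gives the claimed bound.

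There is no real obstacle here given the earlier results. The one point to handle carefully is that $d_k$ is defined for arbitrary probability measures, not only ones with densities in $L^2(\mu_d)$. The argument never uses Parseval against $p-q$: it only uses the uniform bound for the remainder and the exact finite expansion of $\tilde f_m$. The Chebyshev coefficients $c_K$ are computed with respect to $\mu_d$, and $f\in C^k\subset L^2(\mu_d)$, so Theorem~\ref{thm:main-d} applies directly. The coefficient domination $|\tilde c_K|\le|c_K|$ in Lemma~\ref{lem:Jackson} is exactly what transfers the energy bound from $f$ to $\tilde f_m$.
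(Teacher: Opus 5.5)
Your proposal is correct and follows essentially the same route as the paper. You approximate $f$ by the Jackson polynomial $\tilde f_m$, bound the remainder against $p$ and $q$ separately by $dC_k^{\mathrm{Jac}}/m^k$ each, and control the polynomial part by weighted Cauchy--Schwarz together with $|\tilde c_K|\le|c_K|$ and Theorem~\ref{thm:main-d}; your explicit note that the $K=0$ term vanishes because $\overline T_0\equiv 1$ is a detail the paper leaves implicit.
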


As a corollary of  Theorem~\ref{thm: IPM_Chebyshev}, we can consider the case where $p=q_n$ is the empirical measure of $n$ i.i.d samples of a population density $q$. Then the following theorem shows 
the convergence rate of the empirical measures to $q$ under the $d_k$-metric. The proof is deferred to Appendix~\ref{sec: convergence_rate}.
\begin{theorem}
    \label{thm: convergence_rate}
    For a probability measure $q$ supported on $[-1,1]^d$, let $\bx_1,\dots, \bx_n$ be i.i.d samples from $q$ and let $q_n$ denote the empirical distribution of $\{\bx_i\}_{i=1}^n$. Then for the metric $d_k$ define in \eqref{eq: def_dk},
    \[\E d_k(q_n,q)\leq \begin{cases}
        C_{d,k}^{\mathrm{conv}} n^{-k/d}, & \text{if } 2k<d,\\
        C_{d,k}^{\mathrm{conv}}\sqrt{\log n/ n}, & \text{if }2k=d,\\
        C_{d,k}^{\mathrm{conv}}/\sqrt{n}, &\text{if }2k>d,
    \end{cases}\]
    where $C_{d,k}^{\mathrm{conv}}\leq (Ck\sqrt{d})^k$ and $C$ is an absolute constant.
\end{theorem}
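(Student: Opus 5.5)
We apply Theorem~\ref{thm: IPM_Chebyshev} with $p=q_n$ and $p=q$, take expectations, and optimize over the truncation level $m\ge k$. This gives
\[
\E d_k(q_n,q)\le \frac{2C_k^{\mathrm{Jac}} d}{m^k}+\sqrt{C_{d,k}'}\;\E\Gamma,
\qquad
\E\Gamma\le \big(\E\Gamma^2\big)^{1/2}
\]
by Jensen. The whole argument reduces to estimating the second moment of $\Gamma$.

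For each fixed $K\neq 0$, the quantity $\E_{q_n}\overline T_K-\E_q\overline T_K$ is an average of $n$ i.i.d.\ centered variables $\overline T_K(\bx_i)-\E_q\overline T_K$. Since $|\overline T_K|\le 2^{d/2}$ pointwise, the variance satisfies
\[
\E\big|\E_{q_n}\overline T_K-\E_q\overline T_K\big|^2\le \frac{\E_q\overline T_K^2}{n}.
\]
The pointwise bound gives only $2^{d}/n$, and the $2^d$ factor must be avoided to meet $C^{\mathrm{conv}}_{d,k}\le (Ck\sqrt d)^k$. I would first try the sharper bound $\E_q\overline T_K^2\le 2^{\#\{i:k_i\neq0\}}$. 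If that does not suffice, I would use $\overline T_K^2=\prod_i(1+T_{2k_i}(x_i))$, with the unnormalized $T$. Expanding this product and grouping gives a bound $2^{s}$ with $s$ the support size, and the $s$-dependence can then be absorbed into the lattice sum below. This absorption is the one place where constant bookkeeping matters, and I expect it to be the main obstacle to the claimed $d$-dependence; the rate itself is unaffected.

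Summing over $K$, we get
\[
\E\Gamma^2\lesssim \frac1n\sum_{K\in\{0,\dots,m\}^d\setminus\{0\}}\|K\|_2^{-2k}.
\]
Comparing the lattice sum with an integral in polar coordinates, the number of $K$ with $\|K\|_2\approx r$ is of order $r^{d-1}$ (times a dimensional constant). Hence the sum is at most $C_d\sum_{r=1}^{\sqrt d m} r^{d-1-2k}$. This yields three cases:
\begin{itemize}
\item[(a)] the sum is $O(m^{d-2k})$ when $2k<d$;
\item[(b)] it is $O(\log m)$ when $2k=d$;
\item[(c)] it is $O(1)$ when $2k>d$.
\end{itemize}
Therefore
\[
\E d_k\lesssim m^{-k}+\sqrt{C'_{d,k}}\,n^{-1/2}\times\begin{cases}m^{d/2-k},\\ \sqrt{\log m},\\ 1.\end{cases}
\]

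We then choose $m$ in each case.
\begin{itemize}
\item[(a)] With $m\asymp n^{1/d}$, both terms become $n^{-k/d}$.
\item[(b)] With $m\asymp n^{1/(2k)}$, we have $m^{-k}=n^{-1/2}$ and $\log m\asymp\log n$, which gives $\sqrt{\log n/n}$.
\item[(c)] With $m\to\infty$, or $m\asymp n^{1/(2k)}$, the bound is $n^{-1/2}$.
\end{itemize}
The requirement $m\ge k$ holds for $n$ large. For small $n$ the claim is trivial, because $d_k\le 2$ and the constant $C^{\mathrm{conv}}_{d,k}$ can absorb the gap.

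Finally, we collect constants: $C_k^{\mathrm{Jac}}\le (Ck)^k$, $\sqrt{C'_{d,k}}\le d^{k/2}e^k k!\le (Ck\sqrt d)^k$, and the lattice-sum constant, where $\sqrt d\,m$ bounds the radius. Together these should give $C^{\mathrm{conv}}_{d,k}\le (Ck\sqrt d)^k$, provided the dimensional factors from the lattice count and from $\E\overline T_K^2$ are controlled as discussed above.
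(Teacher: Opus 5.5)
Your route is the paper's: apply Theorem~\ref{thm: IPM_Chebyshev} to the pair $(q_n,q)$, bound each moment error by $\E_q\overline T_K^2/n\le 2^d/n$, sum the lattice series with Lemma~\ref{lem: est_S} at exponent $2k$, and choose $m$ in each regime. The paper takes $m=n^{1/d}/C$, $m=n$, and $m$ arbitrarily large; your $m\asymp n^{1/(2k)}$ in the critical case works equally well. The rate argument is correct. For the lattice sum, the paper's $\ell_\infty$-shell count ($\|K\|_2\ge\|K\|_\infty=j$ and $\#\{K:\|K\|_\infty=j\}\le d\,2^{d-1}j^{d-1}$) gives an explicit dimensional constant with less effort than a polar-coordinate count.

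Your plan goes wrong in its fix for the factor $2^d$: no sharpening of $\E_q\overline T_K^2$ that holds for all $q$ can remove it. Take $q$ uniform on $\{\pm1\}^d$. Then $\overline T_K(\bx)=2^{\mathrm{nnz}(K)/2}\prod_{i:\,k_i\text{ odd}}x_i$, so $\mathrm{Var}_q\,\overline T_K=2^{\mathrm{nnz}(K)}$ whenever some $k_i$ is odd. When $2k<d$, the sum $\sum_K 2^{\mathrm{nnz}(K)}\|K\|_2^{-2k}$ is dominated for large $m$ by the full-support $K$. So refining by $\mathrm{nnz}(K)$ and ``absorbing into the lattice sum'' still leaves $2^d$.

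The missing idea is that the paper keeps the $2^d$ and absorbs it through the choice of $m$. If $2k\ge d$, then $2^d\le 4^k$ and $\sqrt d\le\sqrt{2k}$ are already of the form $C^k$. If $2k<d$, the variance term $\sqrt{C'_{d,k}}\,\sqrt d\,2^d\,m^{d/2-k}n^{-1/2}$ carries a \emph{positive} power of $m$. Taking $m=n^{1/d}/C$ instead of $m=n^{1/d}$ therefore multiplies it by $C^{-(d/2-k)}$. With $C=4$ this turns $2^d$ into exactly $4^k$, and $C=8$ also swallows the $\sqrt d$, while the Jackson term grows only by $C^k$.

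The paper's own constant tracking is loose elsewhere too: for $k=1$, the Jackson term $2C_1^{\mathrm{Jac}}d/m$ alone carries a factor $d$ rather than $\sqrt d$. Also, in the critical case the stated bound vanishes at $n=1$, so your small-$n$ absorption needs $n\ge 2$ there.
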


\section{DP synthetic data generation for $k$-smooth queries}\label{sec:alg}

\begin{algorithm}[ht]
\DontPrintSemicolon
\caption{Private Chebyshev moment matching for smooth queries}
\KwIn{Dataset $\bx_1,\dots,\bx_n \in [-1,1]^d$, privacy parameters $\varepsilon,\delta>0$, smoothness parameter $k\geq 1$. Moment matching level $m>0$.}

1: Let $\Delta = 2 m^{-k}$. Partition $[-1,1]^d$ into a uniform grid of side length $\Delta$ along each coordinate. Define $\mathcal{G}$ to be the set of cell centers, i.e.,
\[
\mathcal{G}
=
\left\{
-1 + \left(i_1 + \tfrac{1}{2}\right)\Delta,\,
\dots,\,
-1 + \left(i_d + \tfrac{1}{2}\right)\Delta
:\;
i_j = 0,1,\dots,\tfrac{2}{\Delta}-1
\right\}.\]

Set $r=m^k$ be the number of points per coordinate,
and for $J=(j_1,\dots,j_d)\in [r]^d$ denote 
\[\bg_J=\Big(-1+\frac{2j_1-1}{2}\cdot\Delta,\dots, -1+\frac{2j_d-1}{2}\cdot\Delta\Big )\]
to be the $J$-th element of $\mathcal{G}$.\;

2: For $i=1,\dots,n$, round $\bx_i$ to the nearest point in $\mathcal G$: \[\tilde{\bx}_i=\arg\min_{\by\in \mathcal{G}} \|\bx_i-\by\|_2.\]

3: Set $\sigma^2=   8\cdot 2^d \cdot\frac{S\log(1.25/\delta)}{n^2 \varepsilon^2}$, where $S=\sum_{K\in \{0,\dots,m\}^d  \setminus \{\0\}  }\frac{1}{\|K\|_2^k}$.\\

\quad  For $K\in \{0,\dots,m \}^d\setminus \{\0\}$, let $\widehat{m}_K=\eta_K+\tfrac{1}{n}\sum_{i=1}^n \overline T_K(\tilde{\bx}_i)$ with $\eta_K \sim  N(0, \|K\|_2^k \sigma^2)$.

4: Let $\{q_J\}_{J\in [r]^d}$ be the approximate solution to the following optimization problem:
\begin{align*}
\min_{(z_J)_ {J \in [r]^d}} \quad &
\sum_{K \in \{0,\ldots,m\}^d \setminus \{\0\}}
\frac{1}{\|K\|_2^{2k}}
\left(
\widehat{m}_K
- \sum_{J \in [r]^d} z_J \, \overline{T}_K(\bg_J)
\right)^{\!2}, \\[4pt]
\text{subject to} \quad &
\sum_{J \in [r]^d} z_J = 1,
\qquad
z_J \ge 0, \ \forall J \in [r]^d
\end{align*}
with error $\eta \leq (\varepsilon n)^{-2}$.
Define probability measure $q:=\sum_{J\in [r]^d}  q_J \delta_{g_J}$.

5: Let $Y$ be the multiset of $m'$ i.i.d. samples from the probability measure $q$ with
    \[m'\coloneqq \begin{cases}
        \lceil \varepsilon n \rceil & \text{if }2k<d,\\
        \lceil (\varepsilon n)^2 \rceil, &\text{if }2k\geq d.
    \end{cases}\]

\KwOut{ 
The multiset $Y$ of size $m'$.} 
\label{alg: main}
\end{algorithm}

We now present Algorithm~\ref{alg: main}, which generalizes the synthetic data algorithm in \citep{musco2025sharper} to $k$-smooth queries. Algorithm~\ref{alg: main} is motivated by Theorem~\ref{thm: IPM_Chebyshev}: by perturbing the Chebyshev moments of the original empirical data distribution, we can obtain a synthetic data distribution with good utility guarantees.  The outline of the algorithm can be summarized as follows: 
\begin{enumerate}
    \item Partition the region $[-1,1]^d$ into a grid with resolution $\Delta$.
    \item Align the original data to the grid points and get an approximate distribution on the grid. 
    \item Compute the Chebyshev moments and add noise to guarantee privacy.
    \item Generate a synthetic data distribution $q$ by matching the perturbed Chebyshev moments.
    \item Output a synthetic data set $Y$ where $p_Y$ is an empirical version of the distribution $q$.
\end{enumerate}

Next, we show the privacy and accuracy guarantee of Algorithm~\ref{alg: main}, which proves Theorem~\ref{thm:main}.


\subsection{Privacy}
For a dataset  $X = \{\bx_1, \ldots, \bx_n\} \subseteq [-1,1]^{d}$, let $f(X)$ be a vector-valued function with the scaled Chebyshev moments
of the uniform distribution over $X$, indexed by $K \in \{0, \ldots, m\}^d \setminus \{\0\}$:
\[
f(X)_K = \frac{1}{\|K\|_2^{k/2}} \cdot \frac{1}{n} \sum_{i=1}^n \overline{T}_K(\bx_i),
\]
where $f(X)_K$ denotes the $K$-th entry of the vector $f(X)_K$, and $\overline{T}_K(\bx)$ is the $K$-th normalized
multivariate Chebyshev polynomial.  
Define $\nnz(K)$ to be the number of non-zero entries in $K$. Since $\max_{\bx \in [-1,1]^d} |\overline{T}_K(\bx)| \le  {2^{\mathrm{nnz}(K)/2}}$, for adjacent datasets $X,X'$, we have
\[|f({X})_K-f({X}')_K|\leq \frac{1}{\|K\|_2^{k/2}} \cdot \frac{2}{n}\cdot \max_{\bx \in [-1,1]^d} |\overline{T}_K(\bx)|\leq \frac{ {2\cdot 2^{\mathrm{nnz}(K)/2}}}{\|K\|_2^{k/2}\cdot n}.\]
Hence, the sensitivity has bound
\begin{align*}
    \Delta_{2,f}^2
    & = \max_{\substack{{X}, {X}' \text{ adjacent}}}
    \| f({X}) - f({X}') \|_2^2 \\
    & \le
    \sum_{K \in \{0, \ldots, m\}^d \setminus \{\0\}}
    \frac{1}{\|K\|_2^k} \cdot \frac{1}{n^2} \cdot
     {4 \cdot 2^{\mathrm{nnz}(K)}} \\
    & \le
     {\frac{4 \cdot 2^d}{ n^2}}
    \sum_{K \in \{0, \ldots, m\}^d \setminus \{\0\}} \frac{1}{\|K\|_2^k}=
     {\frac{4 \cdot 2^d}{ n^2}} S,
\end{align*}
where $S = \sum_{K \in \{0, \ldots, m\}^d \setminus \{\0\}} \frac{1}{\|K\|_2^k}$ as defined in Algorithm~\ref{alg: main}.
Let the $g(X)$ denote the rounded datasets computed Algorithm~\ref{alg: main}, i.e. $g(X) \coloneqq \{\tilde{\mathbf{x}}_1, \ldots, \tilde{\mathbf{x}}_n\}$. Then for two adjacent datasets ${X}, {X}'$, we have $g(X), g(X')$ are also adjacent. Thus we have 
\[\Delta_{2,f\circ g}^2 \leq \Delta_{2,f}^2 \leq  {\frac{4 \cdot 2^d}{ n^2}} S.\]
By Gaussian mechanism (Lemma~\ref{lem: gaussian_mechanism}) and our choice of $\sigma^2$ in Algorithm~\ref{alg: main},   adding independent $N(0,\sigma^2)$ to each component of $f\circ g(x)$ guarantees $(\varepsilon, \delta)$-differential privacy, which is exactly to output $(\|K\|_2^{-k/2} \widehat m_K)_{K \in \{0, \ldots, m\}^d \setminus \{\0\}}$. Then the moments $(\widehat m_K)_{K \in \{0, \ldots, m\}^d \setminus \{\0\}}$ satisfies $(\varepsilon, \delta)$-differential privacy. 
Finally, since the remainder of Algorithm~\ref{alg: main} simply post-processes
$(\widehat{m}_K)$
without returning to the original data $X$,
the output of the algorithm is also $(\varepsilon, \delta)$-differentially private.

\subsection{Accuracy}
\label{sec: Accuracy}
Next, we show the accuracy bound under the integral probability metric $d_k$ in the following steps.

\noindent\textbf{(1) Error in Step 2:}
 Let $\widetilde{{X}}$ denote the aggregated dataset in Step 2. Since each data point is moved by at most $\frac{\sqrt d}{2}\Delta$ in $\ell_2$-distance,  and $f\in \mathcal F_k$ is $\sqrt{d}$-Lipschitz under $\ell_2$-distance,
\[d_k(p_{{X}}, p_{\widetilde{{X}}}) =\sup_{f\in \mathcal F_k}  \big| \langle f, p_{{X}} - p_{\widetilde{{X}}}\rangle \big|\leq \sup_{i} \sup_{f\in\mathcal F_k}|f(\bx_i)-f(\overline{\bx}_i)| \leq \sqrt{d} \sup_i \|\bx_i-\overline{\bx}_i\| \leq \frac{d\Delta}{2}
.\]

\noindent\textbf{(2) Error in Step 4:}
Since $q$ is approximately the optimal construction under the weighted $\ell^2$-norm of the Chebyshev moments,  
\begin{align*} 
    \Gamma^2 & \coloneqq \sum_{K} \frac{\left\vert \E_{X\sim p_{\widetilde {{X}}}} \overline{T}_K(X) - \E_{X\sim q} \overline{T}_K(X)\right\vert ^2}{\|K\|_2^{2k}} \\
    &\leq 2\sum_{K} \frac{\left\vert \E_{X\sim p_{\widetilde{{X}}}} \overline{T}_K(X) - \widehat{m}_K\right\vert^2 + \left\vert \widehat{m}_K - \E_{X\sim q} \overline{T}_K(X)\right\vert ^2}{\|K\|_2^{2k}} \\
    &\leq 4 \sum_{K} \frac{\left\vert \E_{X\sim p_{\widetilde{{X}}}} \overline{T}_K(X) - \widehat{m}_K\right\vert ^2}{\|K\|_2^{2k}} + 2\eta\\
    & = 4 \sum_{K} \frac{\left\vert \eta_K\right\vert ^2}{\|K\|_2^{2k}} + 2\eta.
\end{align*}
And by Theorem~\ref{thm: IPM_Chebyshev}, when $m\geq k$, with  the constant $C_{d,k}'$   in Theorem~\ref{thm:main-d}, there is 
\[d_k(p_{\widetilde{{X}}}, q)\leq \frac{2 C_k^{\mathrm{Jac}}\cdot d}{m^{k}} + \sqrt{C_{d,k}'}\cdot \Gamma.\]
For the term with $\Gamma$, taking expectations, 
\[(\E \Gamma)^2 \leq \E \Gamma^2 
= 4 \sum_{K} \frac{\E \left\vert \eta_K\right\vert ^2}{\|K\|_2^{2k}}  + 2\eta 
= 4 \sum_{K} \frac{\sigma^2}{\|K\|_2^{k}}  + 2\eta
= 32\cdot 2^d \cdot\frac{\log(1.25/\delta)}{n^2 \varepsilon^2} \cdot S^2 + 2\eta.\]
Applying the estimate of $S$ from Lemma~\ref{lem: est_S} in Appendix~\ref{sec:S}, we have  
\begin{align}\E \Gamma &
\leq  
c^d\cdot \frac{\sqrt{\log(1.25/\delta)}}{\varepsilon n} \cdot 
    \begin{cases}
    1+\dfrac{m^{\,d-k}-1}{d-k}, & \text{if } k<d,\\
    1+\log m, & \text{if }k=d, \phantom{\dfrac{1}{1}}\\
    1+\dfrac{1}{k-d}, &\text{if }k>d,
    \end{cases}
\end{align}
where $c>\sqrt{8}$ is an absolute constant.

\noindent\textbf{(3) Error in Step 5:}
Since the synthetic data in $Y$ are $m'$ i.i.d samples from distribution $q$, by Theorem~\ref{thm: convergence_rate} we have 
\[\E d_k(p_Y, q) \leq \begin{cases}
            C_{d,k}^{\mathrm{conv}} m'^{-k/d}, & \text{if } 2k<d,\\
        C_{d,k}^{\mathrm{conv}}\sqrt{\log m'/ m'}, & \text{if }2k=d,\\
        C_{d,k}^{\mathrm{conv}}/\sqrt{m'}, &\text{if }2k>d.
\end{cases}\]
Taking
\begin{equation}
    m'=\begin{cases}
        \lceil \varepsilon n \rceil & \text{if }2k<d,\\
        \lceil (\varepsilon n)^2 \rceil, &\text{if }2k\geq d
\end{cases}
\label{eq: m'choice}
\end{equation}
suffices to obtain $\E d_k(p_Y, q) \leq C_{d,k}^{\mathrm{conv}}(\varepsilon n)^{-\min\{1, k/d\}}$.

\noindent\textbf{(4) Total error:}
Together, we have an accuracy bound 
\begin{align*}
    \E d_k(p_{{X}}, p_Y) & \;\leq\; \E d_k(p_{{X}}, p_{\widetilde{{X}}}) + \E d_k(p_{\widetilde{{X}}}, q)  + \E d_k(q,p_Y)\\
    &\leq\; d  \Delta + \frac{2C_{k}^{\mathrm{Jac}}\cdot d}{m^{k}} + \sqrt {C_{d,k}'}\cdot \E \Gamma+C_{d,k}^{\mathrm{conv}}(\varepsilon n)^{-\min\{1,k/d\}}.
\end{align*}
In the three different cases, by choosing $m= \Big \lceil \frac{1}{c} \cdot \bigg(
\frac{\varepsilon n}{\sqrt{\log(1.25/\delta)}}\bigg)^{1/\max\{d,k\}} \Big\rceil \geq k$ due to our assumption $
\frac{\varepsilon n}{\sqrt{\log(1.25/\delta)}}\geq (ck)^{\max\{d,k\}}$,  we have 
\begin{enumerate}
    \item When $k<d$, $(c\cdot m)^d \leq \frac{\varepsilon n}{\sqrt{\log(1.25/\delta)}}$, and there is
    \begin{align*}
        \E d_k(p_{{X}}, p_Y) & \leq   d   \Delta + \frac{2C_k^{\mathrm{Jac}}\cdot d}{m^{k}} +  \sqrt {C_{d,k}'}\cdot \frac{c^d\sqrt{\log(1.25/\delta)}}{\varepsilon n} \cdot m^{d-k}+ C_{d,k}^{\mathrm{conv}}(\varepsilon n)^{-k/d} \\
        &\leq  C_{d,k} \cdot \bigg(\frac{\sqrt{\log(1.25/\delta)}}{\varepsilon n}\bigg)^{k/d};
    \end{align*}    
    \item When $k=d$, there is
    \begin{align*}
        \E d_k(p_{{X}}, p_Y) & \leq  d  \Delta + \frac{2C_k^{\mathrm{Jac}}\cdot d}{m^{k}} +  \sqrt {C_{d,k}'}\cdot \frac{c^d\sqrt{\log(1.25/\delta)}}{\varepsilon n} \cdot \log m +C_{d,k}^{\mathrm{conv}}(\varepsilon n)^{-1}\\
        &\leq C_{d,k} \cdot \frac{\sqrt{\log(1.25/\delta)}}{\varepsilon n}\cdot\log (\varepsilon n);
    \end{align*}
    \item When $k>d$, there is
    \begin{align*}
        \E d_k(p_{{X}}, p_Y) & \leq  d  \Delta + \frac{2C_k^{\mathrm{Jac}}\cdot d}{m^{k}} +  \sqrt {C_{d,k}'}\cdot \frac{c^d\sqrt{\log(1.25/\delta)}}{\varepsilon n} \cdot 2 +C_{d,k}^{\mathrm{conv}}(\varepsilon n)^{-1} \\
        &\leq  C_{d,k} \cdot \frac{\sqrt{\log(1.25/\delta)}}{\varepsilon n}. 
    \end{align*}
\end{enumerate}

Here in each of the three cases, we have a uniform bound for the constant $C_{d,k}$:
\[C_{d,k} \leq c^{k} \left(2d + 2C_k^{\mathrm{Jac}}\cdot d + \sqrt{C_{d,k}'} + C_{d,k}^{\mathrm{conv}} \right) \leq (Ck)^k d^{\max\{1, k/2\}}\]
by using the constant bounds in Theorem~\ref{thm:main-d}, Theorem~\ref{thm: IPM_Chebyshev}, and Theorem~\ref{thm: convergence_rate}.

\acks{The authors thank Roman Vershynin for helpful comments. Y.Z. was partially supported by the Simons Grant MPS-TSM-00013944.}

\bibliography{ref}


\appendix

\section{Minimax lower bound}\label{sec:minimax}

In this section, we prove Theorem~\ref{thm:main2} in the following steps.

\subsection{Total variation distance bound}
We will first introduce several auxiliary lemmas that connect the definition of differential privacy with total variation distance.

\begin{lemma}[TV bound from approximate likelihood ratio]\label{lem:tv-tanh}
Let $P$ and $Q$ be probability measures on a common measurable space
$(\Omega,\mathcal F)$. Suppose that there exist parameters $\xi\ge 0$ and
$\tau\ge 0$ such that for every measurable set $S\in\mathcal F$,
\begin{equation}\label{eq:approx-lr-assumption}
P(S)\ \le\ e^{\xi} Q(S) + \tau,
\qquad
Q(S)\ \le\ e^{\xi} P(S) + \tau.
\end{equation}
Then
\begin{equation}\label{eq:tv-tanh-simple}
\operatorname{TV}(P,Q)
\coloneqq \sup_{S\in\mathcal F} |P(S)-Q(S)|
\ \le\ \tanh\!\Big(\frac{\xi}{2}\Big)+\frac{2\tau}{e^{\xi}+1}
.
\end{equation}
\end{lemma}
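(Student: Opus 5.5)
Fix a measurable set $S$ and write $a=P(S)$ and $b=Q(S)$. Since $\operatorname{TV}(P,Q)=\sup_S (P(S)-Q(S))$, and the hypotheses are symmetric in $P$ and $Q$, it suffices to bound $a-b$ from above for every $S$. Applying \eqref{eq:approx-lr-assumption} to $S$ and to its complement $S^c$ gives four linear constraints on the pair $(a,b)\in[0,1]^2$:
\[
a\le e^{\xi}b+\tau,\qquad b\le e^{\xi}a+\tau,\qquad 1-a\le e^{\xi}(1-b)+\tau,\qquad 1-b\le e^{\xi}(1-a)+\tau.
\]
Only the first and third constraints matter for bounding $a-b$ from above. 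The problem then reduces to a two-variable linear program: maximize $a-b$ subject to these inequalities.

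The key step is to add suitably weighted copies of the first and third constraints so that $a-b$ appears with a common coefficient. The first gives $a-e^{\xi}b\le\tau$. The third rearranges to $e^{\xi}b-a\le e^{\xi}-1+\tau$, hence $a-b\le e^{\xi}(a-b)+\ldots$, so the weights need to be chosen carefully. I will instead multiply the first inequality by $1$ and the rearranged third inequality, $(1-a)-e^{\xi}(1-b)\le\tau$, by $1$, and then add. This gives
\[
a-e^{\xi}b+1-a-e^{\xi}+e^{\xi}b\le 2\tau,
\]
which cancels and is useless. So the correct combination is: multiply the first constraint by $1$ and the third by $e^{\xi}$, or vice versa, so that the $b$-terms combine with the $a$-terms into a multiple of $a-b$. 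Concretely, the first constraint times $1$ plus the third constraint, written as $e^{\xi}b - a\cdot 1 \le e^{\xi}-1+\tau$ and scaled appropriately, is what to aim for.

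A cleaner way to get the weights is to note that the maximum of the linear program is attained at the vertex where the first and third constraints are tight. That vertex is $a=e^{\xi}b+\tau$ together with $1-a=e^{\xi}(1-b)+\tau$. Adding these two equations gives $1=e^{\xi}+2\tau-(e^{\xi}-1)b\cdot 0$. More carefully, substitute $a=e^{\xi}b+\tau$ into the second equation to get $1-e^{\xi}b-\tau=e^{\xi}-e^{\xi}b+\tau$. This does not determine $b$, so I will solve the pair by eliminating explicitly. The intended outcome of the elimination is
\[
(e^{\xi}+1)(a-b)\le e^{\xi}-1+2\tau,
\]
which is exactly \eqref{eq:tv-tanh-simple}, since $\frac{e^{\xi}-1}{e^{\xi}+1}=\tanh(\xi/2)$. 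To derive it, add $a\le e^{\xi}b+\tau$ to $e^{\xi}(\ldots)$ forms of the complement constraint. Specifically, use $1-a\le e^{\xi}(1-b)+\tau$ in the form $b\cdot e^{\xi}\le e^{\xi}-1+a+\tau$ and combine it with $e^{\xi}$ times the mirrored constraint on the other side.

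I expect this bookkeeping of the weights to be the main obstacle. To settle it, I will first try the multipliers $(1,1)$ on the pair "$a\le e^{\xi}b+\tau$" and "$1-b\le e^{\xi}(1-a)+\tau$". Rearranged, the second is $e^{\xi}a-b\le e^{\xi}-1+\tau$. Adding this to $a-e^{\xi}b\le\tau$ gives $(1+e^{\xi})(a-b)\le e^{\xi}-1+2\tau$, which is the desired bound. Dividing by $e^{\xi}+1$ and taking the supremum over $S$ finishes the proof. No other ingredients are needed.
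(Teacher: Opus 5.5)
Your final paragraph is correct and matches the paper's proof: adding $a\le e^{\xi}b+\tau$ (from $S$) to $e^{\xi}a-b\le e^{\xi}-1+\tau$ (from $Q(S^c)\le e^{\xi}P(S^c)+\tau$) gives $(e^{\xi}+1)(a-b)\le e^{\xi}-1+2\tau$. In the write-up, delete the earlier claim that the first and third constraints are the relevant ones, since the third pairs with the second to bound $b-a$, not $a-b$. Also delete the abandoned vertex and elimination attempts; the pair that works is the first and the fourth.
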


\begin{proof}
We first note that
\begin{equation}\label{eq:tv-positive-part}
\operatorname{TV}(P,Q)
=\sup_{S\in\mathcal F} |P(S)-Q(S)|
=\sup_{S\in\mathcal F} \big(P(S)-Q(S)\big),
\end{equation}
because for any measurable $S$ we have
\[
P(S^c)-Q(S^c)
=1-P(S)-\big(1-Q(S)\big)
= -\big(P(S)-Q(S)\big).
\]
Thus, it suffices to bound $P(S)-Q(S)$ from above for any measurable set $S$. For simplicity, fix $S$ and denote
\[
a\coloneqq P(S),\qquad b\coloneqq Q(S), 
\]
Now applying \eqref{eq:approx-lr-assumption} to $S$ and $S^c$ separately, we have that 
\[
P(S)-Q(S)-\tau \ \le\ (e^{\xi}-1)b,
\qquad
P(S)-Q(S)-\tau \ \le\ (e^{\xi}-1)(1-a).
\] 
Combining the above two inequalities, 
\[
P(S)-Q(S)
\ \le\ \frac{e^{\xi}-1}{e^{\xi}+1} + \frac{2\tau}{e^{\xi}+1}
= \tanh\!\Big(\frac{\xi}{2}\Big)\ +\ \frac{2\tau}{e^{\xi}+1}.
\]
Taking the supremum over $S$ finishes the proof.
\end{proof}

To control the total variation distance between two output datasets under $(\epsilon, \delta)$-DP, we iterate the DP guarantee along a Hamming path  between two datasets as follows:
\begin{lemma}[TV distance between two DP-synthetic data outputs] \label{lem:dp-tv-fixed-distance}
Let $\mathcal M$ be an $(\varepsilon,\delta)$-differentially private mechanism that takes an input dataset in $\Omega^n$ and outputs a synthetic dataset.
Let $X,Y\in\Omega^n$ be two fixed datasets with Hamming distance
\[
h\ \coloneqq \ d_{\mathrm{Ham}}(X,Y)=\#\{j : x_j \neq y_j\},
\]
and denote $Q_X, Q_Y$ as the probability measures of the distributions of the outputs $\mathcal{M}(X), \mathcal{M}(Y)$ correspondingly. Then 
\begin{equation}\label{eq:vi-rho}
\operatorname{TV}(Q_X,Q_Y) \le \tanh\!\Big(\frac{\varepsilon h}{2}\Big)\Big(1+\frac{2\delta}{e^{\varepsilon}-1}\Big).
\end{equation}
\end{lemma}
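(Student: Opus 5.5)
The plan is to reduce the statement to Lemma~\ref{lem:tv-tanh} via the standard group-privacy property of $(\varepsilon,\delta)$-DP, obtained by chaining the DP inequality along a Hamming path from $X$ to $Y$. If $h=0$, then $X=Y$ and both sides vanish (since $\tanh 0=0$), so we assume $h\ge 1$.

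\textbf{Step 1 (Hamming path).} Choose datasets $X=Z_0,Z_1,\dots,Z_h=Y$ in $\Omega^n$ such that $Z_{j+1}$ is obtained from $Z_j$ by replacing one coordinate $x_i$ (with $x_i\neq y_i$) by $y_i$. Consecutive datasets are then adjacent. Write $Q_j$ for the law of $\mathcal M(Z_j)$.

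\textbf{Step 2 (group privacy).} Fix a measurable $S$. Applying the DP inequality successively along the path, with the same set $S$ at every step, gives
\[
Q_0(S)\le e^{\varepsilon}Q_1(S)+\delta\le e^{2\varepsilon}Q_2(S)+e^{\varepsilon}\delta+\delta\le\cdots\le e^{h\varepsilon}Q_h(S)+\delta\sum_{j=0}^{h-1}e^{j\varepsilon}.
\]
The geometric sum equals $\tau_h\coloneqq \delta\,\frac{e^{h\varepsilon}-1}{e^{\varepsilon}-1}$. Traversing the path in reverse gives the symmetric inequality $Q_Y(S)\le e^{h\varepsilon}Q_X(S)+\tau_h$. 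This is a straightforward induction on $j$.

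\textbf{Step 3 (apply Lemma~\ref{lem:tv-tanh}).} The hypothesis \eqref{eq:approx-lr-assumption} now holds with $\xi=h\varepsilon$ and $\tau=\tau_h$. Therefore
\[
\operatorname{TV}(Q_X,Q_Y)\le \tanh\!\Big(\frac{h\varepsilon}{2}\Big)+\frac{2\delta}{e^{\varepsilon}-1}\cdot\frac{e^{h\varepsilon}-1}{e^{h\varepsilon}+1}
=\tanh\!\Big(\frac{\varepsilon h}{2}\Big)\Big(1+\frac{2\delta}{e^{\varepsilon}-1}\Big),
\]
using the identity $\frac{e^{h\varepsilon}-1}{e^{h\varepsilon}+1}=\tanh(h\varepsilon/2)$. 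This is exactly \eqref{eq:vi-rho}.

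There is no serious obstacle here. The only point requiring care is in Step 2: the accumulated additive slack must be tracked exactly as the geometric sum $\tau_h$, rather than crudely bounded by $h\delta e^{h\varepsilon}$. Keeping the exact sum is what produces the factor $\frac{e^{h\varepsilon}-1}{e^{h\varepsilon}+1}$, which then combines with $\frac{2}{e^{h\varepsilon}+1}$ in Lemma~\ref{lem:tv-tanh} to give the clean $\tanh$ form.
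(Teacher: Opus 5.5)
Your proposal is correct and takes the same route as the paper. You chain the $(\varepsilon,\delta)$-DP inequality along a Hamming path in both directions, which gives the group-privacy bound with the exact geometric slack $\delta\frac{e^{h\varepsilon}-1}{e^{\varepsilon}-1}$, and then apply Lemma~\ref{lem:tv-tanh} with $\xi=h\varepsilon$ to obtain the $\tanh$ form.
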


\begin{proof}
Since $d_{\mathrm{Ham}}(x,y)=h$, there exists a path
\[
X=X^{(0)},X^{(1)},\dots,X^{(h)}=Y
\]
in the Hamming graph on $\Omega^n$ such that each consecutive pair differs in exactly one coordinate.
Write $Q^{(t)}\coloneqq \mathcal M\big(X^{(t)}\big)$ for $t=0,\dots,h$.
By $(\varepsilon,\delta)$--DP, for every measurable $S$ and every $t$,
\[
Q^{(t+1)}(S)\ \le\ e^\varepsilon Q^{(t)}(S)+\delta.
\]
Iterating this inequality for $t=0,1,\dots,h-1$ and unrolling the recursion yields
\[
Q_Y(S)\ =\ Q^{(h)}(S)
\ \le\ e^{\varepsilon h}Q^{(0)}(S)+\delta\sum_{j=0}^{h-1} e^{j\varepsilon}
\ =\ e^{\varepsilon h}Q_X(S)+\delta_h,
\]
where $\delta_h=\delta\sum_{j=0}^{h-1} e^{j\varepsilon}
\ $. Reversing the path from $y$ back to $x$ gives
\begin{align}
    Q_X(S) \leq e^{\varepsilon h}Q_Y(S)+\delta_h,
\end{align}
where   
\[
\delta_h=\delta\sum_{j=0}^{h-1} e^{j\varepsilon}
=\delta\frac{e^{\varepsilon h}-1}{e^\varepsilon-1}.
\]
 Then Lemma~\ref{lem:tv-tanh} yields
\[
\operatorname{TV}(Q_X,Q_Y)
\ \le\ \tanh\!\Big(\frac{\varepsilon h}{2}\Big) + 2\delta\left(\frac{e^{\varepsilon h}-1}{e^\varepsilon-1}
\right)\cdot \frac{1}{e^{\varepsilon h}+1}= \tanh\!\Big(\frac{\varepsilon h}{2}\Big) \Big(1+\frac{2\delta}{e^{\varepsilon}-1}\Big).\]
 \end{proof}

\subsection{Construction of a family of test functions}

In this section, we construct a collection of test functions in $\mathcal F_k$ defined in \eqref{eq:def_Fk}, which will later be used in the lower bound proof.

\begin{lemma}[A class of test functions in $\mathcal F_k$]\label{lem:local-bumps-en}
Fix integers $d\ge 1$ and $k\ge 1$, and let $m\ge 2$.
Partition $[-1,1]^d$ into $M=m^d$ many uniform closed axis-aligned cubes $\{O_t\}_{t=1}^M$ of side length $r=\frac{2}{m}$, and let $\ba_t$ be the center of $O_t$. 
 For $1\leq t\leq M$, there exists $f_t\in C_c^\infty(O_t)\cap \mathcal{F}_k$  and some point $\bx_t^{\max} \in O_t$ such that
  \[ f_t(\ba_t)=0,   \quad f_t(\bx_t^{\max}) \geq c_{k}^{\max} r^k,\]
  where $c_{k}^{\max}$ is a constant depending only on $k$.
\end{lemma}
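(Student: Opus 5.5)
We construct all $f_t$ from a single reference bump by translation and scaling. Fix a one-dimensional profile $\phi\in C_c^\infty((-1/2,1/2))$ with $\phi\ge 0$ and $\phi(0)=1$, for instance the standard mollifier $\phi(s)=\exp\big(1-\frac{1}{1-4s^2}\big)$ on $|s|<1/2$. Since $f_t(\ba_t)=0$ is required, the bump cannot peak at the center. We therefore place a translated copy inside the half-cube. Define the reference function $g$ on the unit cube $[-1/2,1/2]^d$ by
\[
g(\by)=\prod_{i=1}^d \phi\bigl(4y_i-1\bigr),
\]
which is supported in $\prod_i(0,1/2)$. This support is a subcube of the unit cube that does not contain the origin, so $g(\0)=0$, while $g(\tfrac14\mathbf 1)=1$. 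Any smooth $g$ supported in $[-1/2,1/2]^d$, vanishing at $\0$, with $g(\by_0)>0$ at some point would serve equally well.

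Next, for each cell, set
\[
f_t(\bx)=\lambda\, r^k\, g\Bigl(\frac{\bx-\ba_t}{r}\Bigr),
\]
extended by zero outside $O_t$. Then $f_t\in C_c^\infty(O_t)$ and $f_t(\ba_t)=\lambda r^k g(\0)=0$. With $\bx_t^{\max}=\ba_t+\frac r4\mathbf 1\in O_t$ we get $f_t(\bx_t^{\max})=\lambda r^k$.

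It remains to choose $\lambda$ so that $f_t\in\mathcal F_k$. By the chain rule, $\partial^{\balpha}f_t(\bx)=\lambda r^{k-|\balpha|}(\partial^{\balpha}g)((\bx-\ba_t)/r)$. Because $r=2/m\le 1$ and $|\balpha|\le k$, we have $r^{k-|\balpha|}\le 1$, and so
\[
\|\partial^{\balpha}f_t\|_\infty\le \lambda\, \max_{|\balpha|\le k}\|\partial^{\balpha}g\|_\infty .
\]
The tensor structure gives $\partial^{\balpha}g=\prod_i 4^{\alpha_i}\phi^{(\alpha_i)}(4y_i-1)$. Setting $A_k:=\max_{0\le j\le k}4^j\|\phi^{(j)}\|_\infty\ge 1$ and noting that at most $k$ of the factors are differentiated while the rest are bounded by $\|\phi\|_\infty=1$, we obtain $\|\partial^{\balpha}g\|_\infty\le A_k^{\,k}$. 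This bound depends only on $k$ and not on $d$. Choosing $\lambda=A_k^{-k}$ and $c_k^{\max}=A_k^{-k}$ then completes the argument.

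The main point to get right is that the constant must be independent of $d$. A naive product bound could introduce factors of $\|\phi\|_\infty^{d}$ or dimension-dependent norms. The normalization $\|\phi\|_\infty=\phi(0)=1$ avoids this, since each undifferentiated factor contributes at most $1$ and at most $k$ factors carry derivatives. The other small check is that $r\le1$, so that lower-order derivatives, which scale as $r^{k-|\balpha|}$, are also controlled. This holds since $m\ge2$.
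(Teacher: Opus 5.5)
Your proof is correct and follows essentially the same strategy as the paper. Both take a fixed reference bump, scale it by $r^k$ times a $k$-dependent normalizing constant, translate it into each cell, and then get a $d$-free constant from $r\le 1$ together with the tensor-product structure (undifferentiated factors bounded by $1$, at most $k$ differentiated ones). The only differences are in the details: the paper uses $u_1\prod_{\ell}\eta(u_\ell)$ centered at $\ba_t$, which vanishes at the center because of the odd factor $u_1$, whereas you shift a nonnegative product bump off-center into $\ba_t + r\,(\tfrac18,\tfrac38)^d$; and your normalization $\lambda=A_k^{-k}$ explicitly covers the $|\balpha|=0$ term, which the paper's $C_0$ (a maximum over $1\le|\balpha|\le k$ only) leaves implicit.
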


\begin{proof}
    Taking a smooth function $\eta\in C_c^\infty(\R)$ such that 
    \[
\eta(u)\equiv 1 \ \text{for } u\in\Big[-\frac18,\frac18\Big],
\qquad
\operatorname{supp}(\eta)\subset \Big[-\frac14,\frac14\Big].
\]
Define a $d$-dimensional bump function
\[
\chi(\bu) := u_1 \prod_{\ell=1}^d \eta(u_\ell), \qquad \bu=(u_1,\dots,u_d)\in\R^d,
\]
and set the normalization constant 
\[
C_0 := \max_{1\leq |\balpha|\le k}\big\|\partial^\balpha \chi\big\|_\infty \in (0,\infty).
\]
From our construction of $\chi (\bu)$, $C_0$ is a constant depending only on $k$. (It suffices to consider the case $k<d$. Then we are taking derivatives to at most $k$ many $\eta(u_\ell)$, and we can upper bound the rest by $|\eta(u_{\ell'})|\leq 1$ for $\ell'\neq \ell$.) 

For each $t\in[M]$, define
\[
f_t(\bx) := \frac{r^k}{C_0} \chi \left(\frac{\bx-\ba_t}{r}\right).
\]

\textbf{(i)Support and disjointness. } Since $\operatorname{supp}(\chi)\subset [-\tfrac14,\tfrac14]^d$, we have 
\[
\operatorname{supp}(f_t)\subset \ba_t + r\Big[-\frac14,\frac14\Big]^d \subset O_t.
\]
Because these inner boxes lie strictly inside the grid cells, the supports are pairwise disjoint.

\textbf{(ii) bounds on derivatives} For any multi-index $\balpha$  with $1\leq |\balpha|\le k$, by the chain rule, 
\[
\partial^\balpha f_t(\bx)
= \frac{r^{k-|\balpha|}}{C_0}\,
(\partial^\balpha\chi)\!\left(\frac{\bx-\ba_t}{r}\right).
\]
Hence
\[
\|\partial^\balpha f_t\|_\infty
\le \frac{r^{k-|\balpha|}}{C_0}\,\|\partial^\balpha \chi\|_\infty
\le \frac{1}{C_0}\max_{1 \le |\bbeta|\le k}\|\partial^\bbeta\chi\|_\infty
=1,
\]
where we used $r \le 1$ and $k-|\balpha| \ge 0$. Therefore $f_t\in \mathcal F_k$. 

\textbf{(iii) A lower bound at one point.} Since $\chi(0)=0$, we have $f_t(\ba_t) = 0$. Define
\[
\bu^\star := \Big(\frac{1}{16},0,\dots,0\Big),\qquad \bx_t^{\max}:=\ba_t + r \bu^\star.
\]
Then, $\bu^\star \in[-\tfrac18,\tfrac18]^d $, so $\eta(u^\star_\ell)=1$ for all $\ell$ and thus 
\[
f_t(\bx_t^{\max})
= \frac{r^k}{C_0} \chi(\bu^\star)
= \frac{r^k}{16C_0}.
\]
Finally, setting $c^{\max}_{k} = 1/(16C_0) > 0$ gives $f_t(\bx_t^{\max})\ge c^{\max}_{k}r^k$. 
\end{proof}

\subsection{Construction of  input datasets}
We construct $2^M$ many different datasets $X^{\theta}, \theta\in \{\pm 1\}^M$ as follows:
\begin{construction}
\label{construction: lower_bound_construction}
    Given integers $d\ge1$, $k\ge1$ and a grid size $m\ge 2$, let $r\coloneqq 2/m$ and $M\coloneqq m^d$. Let $\{O_t\}_{t=1}^M$ be the uniform partition of $[-1,1]^d$ into closed hypercubes of length $r$ with centers $(\ba_t)$. Let $\{f_t\}_{t=1}^M$ be the function class introduced in Lemma~\ref{lem:local-bumps-en}.
  Assume $\frac{n}{M}$ is an integer.
 For any fixed $t\in [M]$, define the multi-sets 
        \begin{align*}
            X_t^{-1} &\coloneqq \{ n_t \text{ copies of }\ba_t\},\\
            X_t^{+1} &\coloneqq \{n_t - \lfloor \beta n\rfloor \text{ copies of }\ba_t\} \cup \{\lfloor \beta n\rfloor \text{ copies of } \bx_t^{\max}\},
        \end{align*}
        where $\bx_t^{\max} \in O_t$ satisfies $f_t(\bx_t^{\max})\geq c_{d,k}^{\max} r^k$, and $\sum_{t=1}^M n_t=n,  \min_{1\leq t\leq M} n_t \geq \frac{n}{2M}$.
        For every binary sequence $\theta\in\{\pm 1\}^M$, define a new multi-set 
        $X^\theta \coloneqq \bigcup_{t=1}^M X_t^{\theta_t}$
        and denote its corresponding empirical measure as $p_{X^\theta}$.
\end{construction}

The following lemma shows that with the input datasets from Construction~\ref{construction: lower_bound_construction}, if a type of function class exists, this implies a lower bound under the $d_k$-distance. 
Then, analogous to the general DP-Assouad method in \citep{acharya2021differentially}, we can insert the TV-distance bound into  Assouad's method to control the $d_k$-distance. 

\begin{lemma}[Assouad-type lower bound]\label{lem:assouad-empirical}
Consider a family of deterministic datasets
$\{X^\theta\}_{\theta\in\{\pm1\}^M}$ under Construction \ref{construction: lower_bound_construction}.
For a given $\theta \in \{\pm 1\}^M$, and $i\in\{1,\dots,M\}$, let $\theta^{(i,+)},\theta^{(i,-)}\in \{\pm 1\}^M$ be the two vectors with $\pm 1$ in the $i$-th coordinate, and  all other coordinates coincide with $\theta$.  Assume that there are functions
$\{f_i\}_{i=1}^M\subseteq \mathcal F_k$ such that:

\begin{enumerate}
\item The supports of $\{f_i\}_{i=1}^M$ are pairwise disjoint:
$\operatorname{supp}(f_i)\cap\operatorname{supp}(f_j)=\varnothing$ for $i\neq j$;
\item For all $1\leq i\leq M$, the following integral is independent of $\theta\in \{\pm 1\}^M$:
\begin{equation}\label{eq:emp-sep-def}
  \tau_i = 
  \int f_i(\bx)\,\dd\big(p_{X^{\theta^{(i,+)}}}-p_{X^{\theta^{(i,-)}}}\big)(\bx) \;>\;0.
\end{equation}
\end{enumerate} 
Sample $\theta\sim \mathrm{Unif}(\{\pm 1\}^M)$.
Let $\mathcal M$ be any algorithm that maps 
$X^\theta$ to an output dataset $Z\subset \mathcal X$. Let $Q_{(i,+)}$ and $Q_{(i,-)}$ denote the conditional distribution of $Z$ conditioning on the events $\{\theta_i=+1\}$ and $\{\theta_i=-1\}$, respectively. Define
\begin{equation}\label{eq: def_vi}
    v_i\coloneqq 
    \operatorname{TV}\big(Q_{{(i,+)}},Q_{{(i,-)}}\big).
\end{equation}
Then 
\begin{equation}\label{eq:assouad-main}
  \E_{\theta, Z}\Big[d_k\big(p_{X^\theta},p_Z \big)\Big]
  \;\ge\;
  \frac12\sum_{i=1}^M (1-v_i)\tau_i.
\end{equation}

\end{lemma}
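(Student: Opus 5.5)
The plan is the standard Assouad argument: first lower-bound the $d_k$ error pointwise by a sum of coordinatewise testing losses, then handle each coordinate by Le Cam's two-point bound. The disjoint supports of the $f_i$ are what make the signed combination a valid test function.

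\textbf{Step 1: a signed test function.} For fixed $\theta$ and output $Z$, I would define estimated signs
\[
\hat\theta_i \coloneqq \operatorname{sign}\Big(\int f_i\,\dd p_Z - \int f_i\,\dd \tfrac12\big(p_{X^{\theta^{(i,+)}}}+p_{X^{\theta^{(i,-)}}}\big)\Big).
\]
Let $\bar p_i$ denote the midpoint measure appearing here. Then set $g \coloneqq \sum_{i=1}^M s_i f_i$ with $s_i\in\{\pm1\}$ chosen as $s_i=\theta_i$.

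Since the supports of the $f_i$ are pairwise disjoint, $g$ has the same derivative bounds pointwise as a single $\pm f_i$. At points outside all supports $g$ vanishes together with its derivatives, so $g\in\mathcal F_k$. Therefore
\[
d_k(p_{X^\theta},p_Z)\ \ge\ \sum_i \theta_i\Big(\int f_i\,\dd p_{X^\theta}-\int f_i\,\dd p_Z\Big).
\]

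\textbf{Step 2: reduce each summand to a testing error.} By assumption 2, $\int f_i\,\dd p_{X^\theta} = \int f_i\,\dd\bar p_i + \theta_i\tau_i/2$, and $\bar p_i$ does not depend on $\theta_i$. Hence the $i$-th summand equals $\tau_i/2 - \theta_i D_i$, where $D_i\coloneqq \int f_i\,\dd p_Z - \int f_i\,\dd\bar p_i$.

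Here the signed sum gives only $\sum_i(\tau_i/2-\theta_iD_i)$, which can be negative. So I would instead use $s_i=-\operatorname{sign}(\ldots)$ cleverly. Better: note $d_k \ge \sum_i |\int f_i\,\dd(p_{X^\theta}-p_Z)|$, by choosing $s_i$ as the sign of each term, which is allowed by the same disjointness argument. Each term $|\theta_i\tau_i/2 - D_i|$ is at least $\frac{\tau_i}{2}\mathbf 1\{\operatorname{sign}(D_i)\ne\theta_i\}$. Indeed, if $D_i$ has the wrong sign (or $D_i=0$), the distance from $D_i$ to $\theta_i\tau_i/2$ is at least $\tau_i/2$.

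One caveat: $\bar p_i$ depends on $\theta_{-i}$, so $\hat\theta_i$ is a function of $(Z,\theta_{-i})$. I will handle this by conditioning on $\theta_{-i}$.

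\textbf{Step 3: Le Cam per coordinate.} Taking expectations,
\[
\E\, d_k \ \ge\ \sum_i \frac{\tau_i}{2}\,\mathbb P(\hat\theta_i\neq\theta_i).
\]
For any test, $\mathbb P(\hat\theta_i\ne\theta_i)\ge \frac12(1-\operatorname{TV})$. This gives a bound of the form $\frac14\sum_i(1-v_i)\tau_i$, which is off by a factor $2$ from the claim.

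To get the stated constant $\frac12$, the cleaner route uses $\mathbb P(\text{error})\ge \frac12(1-\operatorname{TV})$ together with the bound $\ge \tau_i\mathbf 1\{\cdot\}$. Equivalently, one recovers the full $\tau_i$ by replacing the midpoint threshold with a coupling: Yu's coupling form of Assouad. There one pairs $\theta^{(i,+)}$ with $\theta^{(i,-)}$ and bounds $|a-D|+|b-D|\ge |a-b|=\tau_i$ on the event where the optimal TV coupling of $Q_{(i,+)}$ and $Q_{(i,-)}$ produces the same $Z$, which has probability $1-v_i$. Averaging over the pair then gives $\frac12(1-v_i)\tau_i$ per coordinate.

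The dependence of the threshold on $\theta_{-i}$ must be handled inside this coupling. I would condition on $\theta_{-i}$, or note that the coupling is taken on the joint of $(\theta_{-i},Z)$ with $\theta_{-i}$ shared, whose TV is exactly $v_i$ by the definition of the conditional laws.

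\textbf{Main obstacle.} The main obstacle is this coupling step: the two terms in the pair use $p_{X^{\theta^{(i,\pm)}}}$, which differ only on $\operatorname{supp} f_i$, and the constant $\frac12$ must come out exactly.
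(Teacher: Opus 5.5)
Your final route is the paper's proof. It has four parts: the disjoint-support reduction $d_k(p_{X^\theta},p_Z)\ge\sum_i|\Delta_i(\theta,Z)|$ (once you switch from $s_i=\theta_i$ to $s_i=\operatorname{sgn}\Delta_i$); a maximal coupling of $Q_{(i,+)}$ and $Q_{(i,-)}$; the inequality $|a-T_i(Z_{(i,+)})|+|b-T_i(Z_{(i,-)})|\ge a-b=\tau_i$ on the event that the coupled outputs coincide; and averaging over $\theta_i$. Drop the midpoint-threshold detour. It only yields $\frac14$, and the pointwise bound $\ge\tau_i\mathbf 1\{\cdot\}$ you invoke is false: at $D_i=0$ the term equals $\tau_i/2$. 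The constant $\frac12$ comes precisely from applying the triangle inequality to the two coupled samples instead of thresholding a single one.

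The step that fails is your treatment of the $\theta_{-i}$-dependence. The joint laws of $(\theta_{-i},Z)$ given $\theta_i=\pm1$ do not have total variation $v_i$. Their components indexed by different values of $\theta_{-i}$ are mutually singular, so their TV equals $2^{-(M-1)}\sum_{\theta_{-i}}\operatorname{TV}(Q_{X^{\theta^{(i,+)}}},Q_{X^{\theta^{(i,-)}}})$. This dominates $v_i$ (marginalizing cannot increase TV) and can be strictly larger. For example, if $Z$ encodes only $\theta_i\theta_j$ for some $j\neq i$, then $v_i=0$ while this joint TV is $1$. So conditioning on $\theta_{-i}$, or coupling jointly, proves the inequality only with that averaged TV in place of $v_i$.

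The paper gets $v_i$ itself by treating $a=\int f_i\,\dd p_{X^{\theta^{(i,+)}}}$ and $b=\int f_i\,\dd p_{X^{\theta^{(i,-)}}}$ as constants. In the Construction, the data in cell $O_j$ depend only on $\theta_j$. So when $f_i$ is supported in $O_i$, as the bumps of Lemma~\ref{lem:local-bumps-en} are, $\int f_i\,\dd p_{X^\theta}$ depends on $\theta_i$ alone, and the coupling can be done on $Z$ alone. That observation, not a joint coupling, is what you need. If you do want to allow genuine $\theta_{-i}$-dependence, the averaged-TV version still suffices for Theorem~\ref{thm:main2}, because Step 1 of its proof bounds exactly that average.
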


\begin{proof}
For each $i$, define the linear functional
\[
\Delta_i(\theta,Z)
\coloneqq \int f_i\,\dd\big(p_{X^\theta}-p_Z \big).
\]

Given any sign vector $s=(s_1,\dots,s_M)\in\{\pm1\}^M$, consider
$f_s\coloneqq \sum_{i=1}^M s_i f_i$. By the disjointness of supports, for every
multi-index $\alpha$ with $|\alpha|\le k$ and every $x\in \mathcal X$ there is at most
one $i$ such that $x\in\operatorname{supp}(f_i)$, hence
\[
\partial^\alpha f_s(x)
=\sum_{i=1}^M s_i\,\partial^\alpha f_i(x)
=\partial^\alpha f_{i_0}(x)
\]
for some $i_0$, and therefore
\(
|\partial^\alpha f_s(x)|
\le \|\partial^\alpha f_{i_0}\|_\infty\le 1.
\)
Thus $f_s\in \mathcal F_k$. Then for any $s\in \{\pm 1\}^M$, the following lower bound holds:
\[
d_k\big(p_{X^\theta},p_Z \big)
=\sup_{f\in \mathcal F_k}\left|\int f\,\dd\big(p_{X^\theta}-p_Z \big)\right|
\;\ge\;
\left|\int f_s\,\dd\big(p_{X^\theta}-p_Z \big)\right|
=\Big|\sum_{i=1}^M s_i\,\Delta_i(\theta,Z)\Big|.
\]
For the given pair $(\theta,Z)$, choosing
$s_i\coloneqq \operatorname{sgn}(\Delta_i(\theta,Z))$ (with an arbitrary choice if
$\Delta_i=0$) gives
\[
\Big|\sum_{i=1}^M s_i\,\Delta_i(\theta,Z)\Big|
=\sum_{i=1}^M |\Delta_i(\theta,Z)|,
\]
and we obtain  
\begin{equation}\label{eq:dk-sum-delta}
d_k\big(p_{X^\theta},p_Z \big)
\;\ge\;\sum_{i=1}^M |\Delta_i(\theta,Z)|.
\end{equation}
Next, we will bound the right-hand side of \eqref{eq:dk-sum-delta} in expectation. 
Fix index $i$. 
For simplicity, denote 
\[
\int f_i\,\dd p_{X^{\theta^{(i,+)}}}\eqqcolon a,
\qquad
\int f_i\,\dd p_{X^{\theta^{(i,-)}}}\eqqcolon b,
\qquad a-b=\tau_i>0,
\]
and 
\[T_i(Z)\coloneqq \int f_i\,\dd p_Z .\] 
Recall the following conditional distribution we defined:
\[
Z\mid(\theta_i=+1)\sim Q_{{(i,+)}},\qquad
Z\mid(\theta_i=-1)\sim Q_{{(i,-)}},
\]
and $v_i=
    \operatorname{TV}\big(Q_{{(i,+)}},Q_{{(i,-)}}\big)$.
Let $(Z_{(i,+)},Z_{(i,-)})$ be a maximal coupling of $Q_{{(i,+)}}$ and $Q_{{(i,-)}}$ so that \[\mathbb P(Z_{(i,+)}= Z_{(i,-)})= 1-v_i.\]
On the event $\{Z_{(i,+)}=Z_{(i,-)}\}$, we have $T_i(Z_{(i,+)})=T_i(Z_{(i,-)})$ hence
\[
\frac12\left(\Big|a-T_i(Z_{(i,+)})\Big|+\Big|b-T_i(Z_{(i,-)})\Big|\right)
\;\ge\; \frac{|a-b|}{2}\,\mathbf 1_{\{Z_{(i,+)}=Z_{(i,-)}\}}
\;=\; \frac{\tau_i}{2}\,\mathbf 1_{\{Z_{(i,+)}=Z_{(i,-)}\}}.
\]
Taking expectations with respect to the coupling gives
\[
\frac12\,\E\left[\Big|a-T_i(Z_{(i,+)})\Big|+\Big|b-T_i(Z_{(i,-)})\Big|\right]
\;\ge\; \frac{\tau_i}{2}(1-v_i).
\]

Then we can take expectations in \eqref{eq:dk-sum-delta}. Since $\theta_i\sim \mathrm{Uniform} \{\pm 1\}$, by revealing the $i$-th coordinate of $\theta$ first, we further obtain
\begin{align}
\E_{\theta,Z}|\Delta_i(\theta,Z)|
&=\frac12\E_{Z\sim Q^{(i,+)}}\Big|a -T_i(Z)\Big|
+\frac12\E_{Z\sim Q^{(i,-)}}\Big|b-T_i(Z)\Big|\\
&=\frac12\E \left(\Big|a-T_i(Z_{(i,+)})\Big|+\Big|b-T_i(Z_{(i,-)})\Big|\right)
\;\ge\; \frac{\tau_i}{2}(1-v_i).
\end{align}
Summing over $i$, we obtain
\[
\E_{\theta,Z}\Big[d_k\big(p_{X^\theta},p_Z \big)\Big]
\;\ge\;
\sum_{i=1}^M \E_{\theta,Z}\big|\Delta_i(\theta,Z)\big|
\;\ge\;\frac12\sum_{i=1}^M (1-v_i)\,\tau_i,
\]
as desired.
\end{proof}

\subsection{Proof of Theorem~\ref{thm:main2}}

Putting all ingredients together, we will finish the proof of Theorem~\ref{thm:main2} based on Lemma~\ref{lem:local-bumps-en}, Lemma~\ref{lem:assouad-empirical}, and datasets from Construction~\ref{construction: lower_bound_construction}.

\begin{proof}(Proof of Theorem~\ref{thm:main2})
Let $\mathcal M$ be any 
$(\varepsilon,\delta)$--DP algorithm that outputs synthetic data in $[-1,1]^d$. Let $\theta\sim\mathrm{Unif}(\{\pm1\}^M)$, $M=m^d$, $Z\coloneqq \mathcal M(X^\theta)$, and $p_Z $ be its associated empirical distribution. 
Let $Q_{(i,+)}$ and $Q_{(i,-)}$  be the   distribution of $Z$ conditioning on the events $\{\theta_i=+1\}$ and $\{\theta_i=-1\}$, correspondingly, and let
\[v_i\coloneqq 
    \operatorname{TV}\big(Q_{{(i,+)}},Q_{{(i,-)}}\big) , \quad 1\leq i\leq M.\]

{\bf Step 1: Uniform upper bound for $v_i$.}
For any $\theta\in \{-1,1\}^M$, we associate a pair of vectors $(\theta^{(i,+)},\theta^{(i,-)})$ that differs only on the $i$-th coordinate and coincide with $\theta$ for all other coordinates.

For any constant $c_1>0$  and $\varepsilon\in (0,c_1)$, choosing $\beta=\frac{2c_1}{n\varepsilon}$ in Construction~\ref{construction: lower_bound_construction}, we have
\[d_{\mathrm{Ham}}(X^{\theta^{(i,+)}}, X^{\theta^{(i,-)}}) \leq \lfloor \beta n\rfloor \leq 2c_1/\varepsilon.\]

Let $Q_{X^{\theta^{(i,+)}}},Q_{X^{\theta^{(i,-)}}}$ be the distribution of $\mathcal{M}(X^{\theta^{(i,+)}}), \mathcal{M}(X^{\theta^{(i,-)}})$. Then by Lemma \ref{lem:dp-tv-fixed-distance}, 
under the assumption 
\[  0\leq \varepsilon\leq c_1, \quad \delta\leq c_2(e^{\varepsilon}-1), \quad \text{where} \quad  0<c_2<\frac{1}{2} \left( \frac{1}{\tanh (c_1)}-1\right),\]
 we have
\begin{align}\operatorname{TV}(Q_{X^{\theta^{(i,+)}}},Q_{X^{\theta^{(i,-)}}}) &\leq \tanh\left(\frac{\varepsilon \lfloor \beta n\rfloor}{2}\right) (1+\frac{2\delta}{e^{\varepsilon}-1})\\
&\leq \tanh(c_1) (1+\frac{2\delta}{e^{\varepsilon}-1})\\
&<\tanh(c_1) (1+2c_2)<1.
\end{align}
Denote the constant $c_3:=\tanh(c_1) (1+2c_2)<1$. Since $Q_{(i,+)}, Q_{(i,-)}$ is the weighted sum of all probability measures $Q_{X^{\theta^{(i,+)}}},Q_{X^{\theta^{(i,-)}}}$ over all but $i$-th coordinate,
\[
v_i = \operatorname{TV}\big(Q_{{(i,+)}},Q_{{(i,-)}}\big) \leq \frac{1}{2^{M-1}} \!\! \sum_{(\theta^{(i,+)},\theta^{(i,-)})} \operatorname{TV}(Q_{X^{\theta^{(i,+)}}},Q_{X^{\theta^{(i,-)}}}) 
\leq c_3 < 1,
\]
where the sum is over all pairs of $\{(\theta^{(i,+)},\theta^{(i,-)})\}, \theta\in \{\pm 1\}^M$ that differs only on the $i$-th coordinate.

\medskip

{\bf Step 2: Apply Lemma~\ref{lem:assouad-empirical}. }
We next check our function class $\{f_t\}_{t=1}^M$ introduced in Lemma~\ref{lem:local-bumps-en} satisfies the three conditions  from Lemma~\ref{lem:assouad-empirical}.

Conditions (i), (ii) hold by Lemma~\ref{lem:local-bumps-en}. For Condition (iii), note that for any pair of $(\theta^{(i,+)},\theta^{(i,-)})$ that differs only on the $i$-th coordinate, by Construction~\ref{construction: lower_bound_construction}, the differences between datasets $X^{\theta^{(i,+)}}$ and $X^{\theta^{(i,-)}}$ are exactly $\lfloor\beta n\rfloor$ many points at either $\bx_i^{\max}$ or $\ba_i$, which is independent of the values of $\theta^{(i,+)},\theta^{(i,-)}$ on the other coordinates. Hence,
\[
\tau_i=\int f_i\,\dd\big(p_{X^{\theta^{(i,+)}}}-p_{X^{\theta^{(i,-)}}}\big)
= \frac{\lfloor\beta n\rfloor}n \left(f_i(\bx_i^{\max}) - f_i(\ba_i) \right)
\]
is a constant independent of $\theta$. Since $\beta=\frac{2c_1}{n\varepsilon}$, and $0<\varepsilon\leq c_1$, from Lemma~\ref{lem:local-bumps-en},  
\begin{align}
    \tau_i\geq  \frac{\beta n-1}n \left(c_{k}^{\max}r^k - 0 \right)  \geq\frac{\beta}{2} c_{k}^{\max} r^k.
\end{align}
Recall $r=2/m$ from Construction~\ref{construction: lower_bound_construction}. This implies
 \[\tau_i \geq C_{k}\beta m^{-k}\] for some constant $C_{k}>0$ depending on $k$.
Plugging the bounds for $v_i, \tau_i$ into 
\eqref{eq:assouad-main} in Lemma~\ref{lem:assouad-empirical}, and using $M=m^d$, we obtain
\begin{align}\label{eq:bayes_lower_bound}
\E_{\theta,Z}\big[d_k(p_\theta,p_Z )\big]
\;\ge\; MC_k\beta m^{-k} \cdot (1-c_3)
\gtrsim  C_{k} \frac{m^{d-k}}{n\varepsilon}.
\end{align}

{\bf Step 3: Optimize over the grid size $m$.}
 Since there are $n_t\geq \frac{n}{2M}$ many data points in each cell $O_t$ from Construction~\ref{construction: lower_bound_construction}, the requirement that each cell contains at least $\lfloor \beta n\rfloor$ movable points is $\beta n\le n/m^d$. Since $\beta=\frac{2c_1}{n\varepsilon}$, this reads $m^d\lesssim n\varepsilon$. Thus all
admissible $m$ satisfy $1\le m\lesssim (n\varepsilon)^{1/d}$. Define \[\Phi(m)\coloneqq m^{d-k}/(n\varepsilon).\] If $d>k$ then $\Phi(m)$ is increasing
in $m$, so the optimum is attained at $m\asymp (n\varepsilon)^{1/d}$ and
\[
\Phi(m)\asymp \frac{(n\varepsilon)^{(d-k)/d}}{n\varepsilon}
=(n\varepsilon)^{-k/d}.
\]
If $d=k$, then $\Phi(m)\equiv (n\varepsilon)^{-1}$ is independent of $m$. If
$d<k$, then $\Phi(m)$ is decreasing in $m$, so the optimum is attained at the
smallest admissible $m$ (a fixed constant), which again yields
$\Phi(m)\asymp (n\varepsilon)^{-1}$. Optimizing in $m$ for \eqref{eq:bayes_lower_bound}
yields
\begin{equation}\label{eq:data-risk-rate}
\E_{\theta,Z}\big[d_k(p_\theta,p_Z )\big]
\;\gtrsim\;
\begin{cases}
C_{k} (n\varepsilon)^{-k/d}, & d>k,\\
C_{k} (n\varepsilon)^{-1},   & d\le k.
\end{cases}
\end{equation}
Finally, note that
\[
\sup_{X\in\Omega^n}\E\big[d_k(\mathrm{Unif}(X),\mathcal M(X))\big]
\;\ge\; \E_{\theta}\E\big[d_k(p_{X^\theta},\mathcal M(X^\theta))\big]
=\E_{\theta,Z}\big[d_k(p_\theta,p_Z )\big],
\]
so the lower bound \eqref{eq:data-risk-rate} for the Bayes risk with respect
to the distribution of $\theta$ also lower bounds the minimax risk. Taking an infimum over all $(\varepsilon, \delta)$-differentially private algorithm $\mathcal{M}$ finishes the proof.
\end{proof}

\section{Proof of Lemma~\ref{lem:isometry}}\label{sec:isometry}
\begin{proof}
Since $dx_i=-\sin\theta_i\,d\theta_i$ and $\sqrt{1-x_i^2}=\sin\theta_i$ for $\theta_i\in[0,\pi]$,
\[
  \int_{[-1,1]^d}|f(\bx)|^2\,d\mu_d(\bx)
  =\frac{1}{\pi^d}\int_{[0,\pi]^d}|f(\cos\btheta)|^2\,d\btheta
  =\|Uf\|_2^2.
\]
For $n\ge1$, $U(\overline T_n)(\btheta)=\sqrt2\cos(n\btheta)=\psi_n(\btheta)$ and $U(\overline T_0)=1=\psi_0$. The orthogonality and completeness follow from the computation of the one-dimensional case.
\end{proof}

\section{Proof of Theorem~\ref{thm:main-d}}\label{sec:appendix_chebyshev}

For a multi-index $\balpha=(\alpha_1,\dots,\alpha_d)$, denote
\[|\balpha|\coloneqq\sum_i\alpha_i,\quad  \balpha!\coloneqq \prod_i \alpha_i!\] and for $K=(k_1,\dots,k_d)$, denote
$K^{2\balpha}\coloneqq \prod_{i=1}^d k_i^{2\alpha_i}$, $\|K\|_2^2\coloneqq \sum_i k_i^2$.

We prove Theorem~\ref{thm:main-d} with several lemmas. The outline of the proof of Theorem~\ref{thm:main-d} is to show the following inequalities:
\[
    \sum_{K}\|K\|_2^{2k}\,c_K^2
    \;\leq\; \sum_{K} K^{2\balpha} c_K^2
    \;=\; \|\partial_\btheta^\balpha g\|_2^2
    \;\le\; C_{d,k}'\, \Big(\max_{|\bbeta|\le k}\|\partial_x^\bbeta f\|_{L^\infty([-1,1]^d)}\Big)^2.
\]

As introduced in Lemma~\ref{lem:isometry}, the Chebyshev polynomials corresponds to Fourier basis after a step of change-of-variable. Therefore we will first study the $k$-smooth functions in $L^2([0,\pi], \pi^{-d}\dd \btheta)$. The following lemma is a classical result in Fourier analysis, proven by integration by parts. (See \citep[Section 3, Eq~(1.4), (1.23)]{taylor1996partial} for more details.)

\begin{lemma}
[Parseval with derivatives in $\btheta$-space, \citep{taylor1996partial}]\label{lem:parseval-deriv-d}
With $g(\btheta)=f(\cos\btheta)$ and $g=\sum_K c_K\psi_K\in L^2([0,\pi]^d)$, for each $\balpha\in\mathbb{N}^d$, $|\balpha| \leq k$, $\partial^{\balpha}_gg$ is continuous and 
\[
  \|\partial_\btheta^\balpha g\|_2^2=\sum_{K} K^{2\balpha} c_K^2.
\]
\end{lemma}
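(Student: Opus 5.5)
The plan is to reduce the statement to the one-dimensional Fourier cosine series and then use orthonormality of $\{\psi_K\}$ from Lemma~\ref{lem:isometry}. Set $g(\btheta)=f(\cos\btheta)$. By Lemma~\ref{lem:isometry}, $g=\sum_K c_K\psi_K$ in $L^2([0,\pi]^d,\pi^{-d}\dd\btheta)$ with the same coefficients $c_K$.

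\textbf{Continuity.} By the chain rule, $g$ is a composition of the $C^k$ function $f$ with the smooth map $\btheta\mapsto\cos\btheta$. Hence $g\in C^k([0,\pi]^d)$, and $\partial^\balpha_\btheta g$ is continuous for $|\balpha|\le k$. This is exactly what makes the next step legitimate.

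\textbf{Differentiated expansion.} The key structural observation is that $g$ extends to a $2\pi$-periodic function in each variable, and this extension is even in each variable, because $\cos$ is even and $2\pi$-periodic. The even periodic extension of $g$ to $\mathbb{T}^d$ is therefore $C^k$ on the torus, with no boundary singularity at $\theta_i\in\{0,\pi\}$, since it is simply $f(\cos\btheta)$ defined on all of $\mathbb{R}^d$.

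On the torus, the expansion in $\{\psi_K\}$ coincides with the full Fourier series restricted to even functions. For each coordinate $i$ with $\alpha_i\ge1$, integrate by parts $\alpha_i$ times in $\theta_i$ over a full period $[-\pi,\pi]$ (equivalently, over $[0,\pi]$ using the parity of the extension). The periodicity kills all boundary terms, which gives
\[
\ip{\partial^\balpha_\btheta g}{\Phi_{K,\balpha}}=\pm K^{\balpha}c_K .
\]
Here $\Phi_{K,\balpha}=\prod_i \phi^{(\alpha_i)}_{k_i}(\theta_i)$, and $\phi^{(a)}_{k}$ is $\sqrt2\cos(k\theta)$ or $\sqrt2\sin(k\theta)$ according to the parity of $a$, with $\phi^{(a)}_0=1$. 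The coefficient of $\partial^\balpha g$ vanishes whenever some $k_i=0$ with $\alpha_i\ge1$, which matches $K^{2\balpha}=0$ in that case.

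\textbf{Parseval.} The family $\{\Phi_{K,\balpha}\}_K$, taken over $K$ with $k_i\ge1$ wherever $\alpha_i\ge1$, is an orthonormal basis of $L^2([0,\pi]^d,\pi^{-d}\dd\btheta)$. This holds because $\{\sqrt2\sin(k\theta)\}_{k\ge1}$ is an orthonormal basis of $L^2([0,\pi],\dd\theta/\pi)$, just as the cosines are, and tensor products of orthonormal bases are orthonormal bases. Applying Parseval's identity in this basis to $\partial^\balpha_\btheta g\in L^2$ gives
\[
\|\partial^\balpha_\btheta g\|_2^2=\sum_K K^{2\balpha}c_K^2 .
\]

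\textbf{Main obstacle.} The delicate step is justifying the integration by parts without boundary terms. For this I would work on the full torus with the periodic extension, where $\int_{-\pi}^{\pi}\partial_\theta h\,\cdot u\,\dd\theta=-\int_{-\pi}^{\pi}h\,\partial_\theta u\,\dd\theta$ holds for $C^1$ periodic $h$ and $u$. I would then halve the domain using the evenness or oddness of the integrand in each variable.
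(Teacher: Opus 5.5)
Your argument is the classical one the paper has in mind; the paper gives no proof and simply cites Taylor for the integration-by-parts/Parseval argument. You use the even $2\pi$-periodic extension of $g$, integrate by parts on the torus so there are no boundary terms, and then apply Parseval in a mixed sine--cosine tensor basis. The parity bookkeeping is correct, and so is the identity $\ip{\partial^\balpha_\btheta g}{\Phi_{K,\balpha}}=\pm K^{\balpha}c_K$, including its vanishing when $k_i=0$ and $\alpha_i\ge1$.

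One step is misstated, though. The family $\{\Phi_{K,\balpha}\}$ indexed by $K$ with $k_i\ge1$ whenever $\alpha_i\ge1$ is not an orthonormal basis when some $\alpha_i$ is even and $\ge2$. In that coordinate the factors are $\sqrt2\cos(k\theta_i)$ with $k\ge1$, which misses the constant function. So you only have an incomplete orthonormal system, and Bessel's inequality gives $\|\partial^\balpha_\btheta g\|_2^2\ge\sum_K K^{2\balpha}c_K^2$ rather than equality.

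The correct index set is as follows.
\begin{itemize}
\item For odd $\alpha_i$, take $k_i\ge1$. Here $k_i=0$ must be excluded, since $1$ is not orthogonal to $\sin(k\theta)$ on $[0,\pi]$ for odd $k$.
\item For even $\alpha_i$, take $k_i\ge0$, with $\phi_0=1$.
\end{itemize}
For the added elements (even $\alpha_i\ge2$, $k_i=0$), the coefficient of $\partial^\balpha_\btheta g$ vanishes. This is exactly the observation you already made. Equivalently, $\partial^{\balpha-\be_i}_\btheta g$ is odd and $2\pi$-periodic in $\theta_i$, so $\int_0^\pi\partial_{\theta_i}\partial^{\balpha-\be_i}_\btheta g\,\dd\theta_i=0$. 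With this index set the tensor family is complete, and Parseval yields the claimed equality.

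Incidentally, the proof of Theorem~\ref{thm:main-d} only uses the direction $\sum_K K^{2\balpha}c_K^2\le\|\partial^\balpha_\btheta g\|_2^2$. Your Bessel version already delivers that direction even without the fix.
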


\begin{lemma}[Chain rule with bounded trigonometric coefficients]\label{lem:chain}
Fix $k\in\mathbb{N}$. For every $\balpha\in\mathbb{N}^d$ with $|\balpha|\le k$, there exist trigonometric
polynomials $P_{\balpha,\bbeta}(\btheta)$, indexed by multi-indices $\bbeta$ with $0\le \bbeta\le \balpha$
(coordinate-wise), such that
\begin{equation}\label{eq:chain-expansion}
  \partial_\btheta^\balpha g(\btheta)
  =\sum_{0\le\bbeta\le\balpha} P_{\balpha,\bbeta}(\btheta)\,\partial_\bx^\bbeta f(\cos\btheta),
\end{equation}
each $P_{\balpha,\bbeta}$ has degree at most $|\balpha|$ in every $\theta_i$,
and
\begin{equation}\label{eq:P-bound}
  \sup_{\btheta\in[0,\pi]^d}|P_{\balpha,\bbeta}(\btheta)|
  \ \le |\balpha|!\qquad \forall \; 0\le \bbeta\le\balpha.
\end{equation}
\end{lemma}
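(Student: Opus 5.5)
The proof is an induction on $|\balpha|$. Throughout, write $g(\btheta)=f(\cos\btheta)$ and note that each variable $x_i=\cos\theta_i$ depends only on $\theta_i$.

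\textbf{Base case and one-variable step.} For $\balpha=\0$ take $P_{\0,\0}\equiv1$; it has degree $0$ and sup norm $1=0!$. For the inductive step, suppose \eqref{eq:chain-expansion} holds for $\balpha$ and let $\balpha'=\balpha+e_i$. Differentiating a single term in $\theta_i$ by the product and chain rules, using $\partial_{\theta_i}[\partial_\bx^\bbeta f(\cos\btheta)]=-\sin\theta_i\,\partial_\bx^{\bbeta+e_i}f(\cos\btheta)$, gives
\[
\partial_{\theta_i}\big[P_{\balpha,\bbeta}\,\partial_\bx^\bbeta f(\cos\btheta)\big]
=(\partial_{\theta_i}P_{\balpha,\bbeta})\,\partial_\bx^\bbeta f(\cos\btheta)
-\sin\theta_i\,P_{\balpha,\bbeta}\,\partial_\bx^{\bbeta+e_i}f(\cos\btheta).
\]
This yields the recursion
\[
P_{\balpha',\bbeta}=\partial_{\theta_i}P_{\balpha,\bbeta}-\sin\theta_i\,P_{\balpha,\bbeta-e_i},
\]
with the convention that $P_{\balpha,\bbeta}=0$ whenever $\bbeta\not\le\balpha$. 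The new indices therefore still satisfy $\0\le\bbeta\le\balpha'$. Continuity of $\partial_\btheta^\balpha g$ follows from $f\in C^k$.

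\textbf{Degree control.} Differentiation does not raise the degree of a trigonometric polynomial, and multiplying by $\sin\theta_i$ raises the degree in $\theta_i$ by at most one. Hence every $P_{\balpha',\bbeta}$ has degree at most $|\balpha|+1$ in each variable.

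\textbf{Sup bound (the main obstacle).} Bounding $\sup|\partial_{\theta_i}P_{\balpha,\bbeta}|$ naively, via Bernstein's inequality, costs a factor of the degree at each step. That produces a bound like $\prod_j(j+1)$, which is too crude to give exactly $|\balpha|!$. I would instead prove a structural claim by the same induction: in the variable $\theta_i$, the coefficient $P_{\balpha,\bbeta}$ is, up to sign, the function
\[
\frac{d^{\alpha_i}}{d\theta_i^{\alpha_i}}\cos\theta_i \text{ expanded by Fa\`a di Bruno},
\]
and it factors as a product over coordinates. Concretely, $P_{\balpha,\bbeta}(\btheta)=\prod_i p_{\alpha_i,\beta_i}(\theta_i)$, where the one-dimensional coefficients are
\[
p_{a,b}(\theta)=B_{a,b}\big(-\sin\theta,-\cos\theta,\sin\theta,\dots\big),
\]
the partial Bell polynomials evaluated at the derivatives of $\cos\theta$. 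All of those derivatives are bounded by $1$ in absolute value. Since the Bell polynomial coefficients are nonnegative, this gives
\[
|p_{a,b}|\le B_{a,b}(1,\dots,1)=S(a,b)\le a!,
\]
where $S(a,b)$ is a Stirling number of the second kind; the inequality $S(a,b)\le a!$ holds because $\sum_b S(a,b)$ is the Bell number, which is at most $a!$. Multiplying over coordinates gives
\[
\prod_i\alpha_i!\le|\balpha|!.
\]
To finish, I would first verify the one-dimensional Fa\`a di Bruno identification, then note that the tensor structure makes the coordinate-wise factorization immediate. The degree bound also follows from this form, since each $p_{a,b}$ has degree at most $a$ in its variable.
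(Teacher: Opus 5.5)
Your proof is correct, but it is not the paper's route, and your reason for abandoning the Bernstein route is a miscalculation. The paper uses exactly your recursion $P_{\balpha+e_j,\bbeta}=\partial_{\theta_j}P_{\balpha,\bbeta}-\sin\theta_j\,P_{\balpha,\bbeta-e_j}$ and then applies Bernstein's inequality in $\theta_j$. Since $P_{\balpha,\bbeta}$ has degree at most $|\balpha|$, the derivative term costs a factor $|\balpha|$ and the sine term costs a factor $1$. With $M_t$ the largest sup norm at level $t$, this gives $M_{t+1}\le (t+1)M_t$, hence $M_t\le\prod_{j=0}^{t-1}(j+1)=t!$. So the product you dismissed as too crude is exactly $|\balpha|!$. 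Using the sharper degree bound $\alpha_j$ in $\theta_j$, it even gives $\prod_i\alpha_i!$.

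Your Fa\`a di Bruno argument is still a valid alternative. Each $x_i=\cos\theta_i$ depends only on $\theta_i$, and mixed partials of order at most $k$ commute for $f\in C^k$. Applying the one-variable formula coordinate by coordinate therefore gives $P_{\balpha,\bbeta}=\prod_i p_{\alpha_i,\beta_i}(\theta_i)$, with $p_{a,b}=B_{a,b}(-\sin\theta,-\cos\theta,\sin\theta,\dots)$, $p_{0,0}=1$, and $p_{a,0}=0$ for $a\ge 1$. This agrees with the recursion through $p_{a+1,b}=p_{a,b}'-\sin\theta\,p_{a,b-1}$.

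The partial Bell polynomials have nonnegative coefficients and $|\sin|,|\cos|\le 1$, so $|p_{a,b}|\le S(a,b)$. Then $S(a,b)$ is at most the Bell number $\sum_b S(a,b)$, which is at most $a!$; for the last step, turn each block of a set partition into an increasing cycle to get an injection into permutations. Finally, $\prod_i\alpha_i!\le|\balpha|!$.

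Your route gives an explicit closed form, the product structure, and the sharper bound $\prod_i S(\alpha_i,\beta_i)$, all without Bernstein's inequality. The paper's route is shorter: it needs only the recursion and one standard inequality, with no combinatorial identification.
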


To show the infinity norm bound in Lemma~\ref{lem:chain}, we introduce the Bernstein inequality in harmonic analysis, which states that the infinity norm of a trigonometric polynomial can be controlled by its derivative. 
\begin{lemma}[Bernstein inequality \citep{Katznelson_2004} Ex.~7.15(c), (7.25)]
\label{lem:bernstein}
Consider a trigonometric polynomial $Q(\vartheta)=\sum_{n=-m}^m a_n e^{in\vartheta}$ of degree $\le m$.
Then
\[
  \sup_{\vartheta\in\R}|Q'(\vartheta)|
  \ \le\ m \sup_{\vartheta\in\R}|Q(\vartheta)|.
\]
\end{lemma}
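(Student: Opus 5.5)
The plan is to establish the inequality through M.~Riesz's interpolation formula for the derivative of a trigonometric polynomial. First I will reduce to the case where $Q$ is real-valued. Let $\|Q\|_\infty\coloneqq\sup_{\vartheta}|Q(\vartheta)|$. Fix a point $\vartheta_0$ and choose a unimodular constant $c$ with $cQ'(\vartheta_0)=|Q'(\vartheta_0)|$. Set $R(\vartheta)\coloneqq \operatorname{Re}\big(cQ(\vartheta)\big)$. Then $R$ is a real trigonometric polynomial of degree at most $m$, since $\overline{e^{in\vartheta}}=e^{-in\vartheta}$ keeps the frequencies in $[-m,m]$. Moreover $\sup|R|\le \|Q\|_\infty$ and $R'(\vartheta_0)=|Q'(\vartheta_0)|$. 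It therefore suffices to prove $|R'(\vartheta_0)|\le m\sup|R|$ for real $R$, and then take the supremum over $\vartheta_0$.

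For the real (indeed for any) polynomial $Q$ of degree $\le m$, I will use the identity
\[
Q'(\vartheta)=\frac{1}{4m}\sum_{j=1}^{2m}\frac{(-1)^{j+1}}{\sin^2(t_j/2)}\,Q(\vartheta+t_j),\qquad t_j=\frac{(2j-1)\pi}{2m}.
\]
By translation invariance, it is enough to verify this at $\vartheta=0$. By linearity, it is enough to verify it on the basis $e^{in\vartheta}$, $|n|\le m$. That is, I will check
\[
\frac{1}{4m}\sum_{j}\frac{(-1)^{j+1}e^{int_j}}{\sin^2(t_j/2)}=in \qquad\text{for } |n|\le m.
\]
For this I will use a Lagrange interpolation argument. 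The function $\sin(m\vartheta)$ equals $(-1)^{j+1}$ at the nodes $t_j$. Dividing the polynomial $e^{in\vartheta}$ by $\sin(m\vartheta)$ and expanding in partial fractions with double poles at the nodes gives the derivative identity. Equivalently, I would compare both sides on the sine and cosine components: the cosine parts give $0=0$ by the symmetry $t_j\mapsto 2\pi-t_j$, while the sine part reduces to a classical finite trigonometric sum.

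Taking absolute values in the identity gives
\[
|Q'(\vartheta)|\le \frac{\|Q\|_\infty}{4m}\sum_{j=1}^{2m}\frac{1}{\sin^2(t_j/2)}.
\]
To evaluate the sum of cosecant squares, I apply the interpolation identity to the particular polynomial $Q(\vartheta)=\sin(m\vartheta)$ at $\vartheta=0$. Since $\sin(mt_j)=(-1)^{j+1}$, every term has $(-1)^{j+1}\sin(mt_j)=1$, and the left side is $m\cos 0=m$. This yields
\[
\sum_{j}\frac{1}{\sin^2(t_j/2)}=4m^2.
\]
Hence $|Q'(\vartheta)|\le m\|Q\|_\infty$. (Given the identity, the reduction to real $R$ is actually unnecessary, but it is harmless.)

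The main obstacle is verifying the interpolation formula on the exponentials $e^{in\vartheta}$, including the extreme frequencies $|n|=m$. If the partial-fraction computation becomes messy, I would fall back to the classical zero-counting argument for real $R$. Normalize $\sup|R|<1$ and suppose $R'(\vartheta_0)>m$. Choose a shift $\phi$ with $\sin(m(\vartheta_0-\phi))=R(\vartheta_0)$, taken on the increasing branch so that $m\cos(m(\vartheta_0-\phi))\le m<R'(\vartheta_0)$. Then the difference $\sin(m(\vartheta-\phi))-R(\vartheta)$ vanishes at $\vartheta_0$ and alternates in sign at the $2m$ extrema of the sine. Combined with a tangency at $\vartheta_0$, this produces more than $2m$ zeros in a period, counted with multiplicity, for a nonzero trigonometric polynomial of degree $\le m$, which is a contradiction.
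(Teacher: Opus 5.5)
The paper gives no argument for this lemma beyond the citation to Katznelson, so a self-contained proof is a genuine alternative. Both of your strategies (M.~Riesz's interpolation formula and zero counting) are the standard ones. Your reduction to a real $R$ and the deduction $|Q'|\le m\|Q\|_\infty$ from the identity are fine, and the identity you state is correct. If you simply cited Riesz's formula, you would match the paper's level of rigor. But you set out to prove the identity, and that is where the gap is. All the content sits in the sine-part claim
\[
\frac{1}{4m}\sum_{j=1}^{2m}\frac{(-1)^{j+1}\sin(nt_j)}{\sin^2(t_j/2)}=n,\qquad 1\le n\le m,
\]
which you defer to an unspecified ``classical finite trigonometric sum''. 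The partial-fraction sketch also divides by the wrong function: the nodes $t_j$ are the zeros of $\cos(m\vartheta)$, while $e^{in\vartheta}/\sin(m\vartheta)$ has its poles at $\ell\pi/m$. Moreover, at $n=m$ the claim is literally $\sum_j\csc^2(t_j/2)=4m^2$. So your later derivation of that sum from the identity cannot double as a check of the extreme frequency; it must be proved independently. A short fix: let $P(z)\coloneqq\sum_{|n|\le m}a_nz^{n+m}$, so that $P(e^{i\vartheta})=e^{im\vartheta}Q(\vartheta)$, and apply the residue theorem to $P(z)/\big((z^{2m}+1)(z-1)^2\big)$, which is $O(|z|^{-2})$ at infinity. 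The simple poles $z_j=e^{it_j}$ have residues $\frac{i(-1)^{j+1}Q(t_j)}{8m\sin^2(t_j/2)}$, and the double pole at $z=1$ has residue $-\frac{i}{2}Q'(0)$. The vanishing of the total is exactly your identity at $\vartheta=0$, for all $|n|\le m$ at once.

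Your fallback does give a complete proof, but its key step is misdescribed. Write $S(\vartheta)\coloneqq\sin(m(\vartheta-\phi))-R(\vartheta)$. There is no tangency at $\vartheta_0$: $S'(\vartheta_0)\le m-R'(\vartheta_0)<0$, so $\vartheta_0$ is a simple zero where $S$ decreases. The increasing-branch choice matters, but not for $m\cos(\cdot)\le m$, which holds on any branch. It matters because it puts $\vartheta_0$ strictly inside an arc running from a minimum to the next maximum of the sine (strictly, since $|R(\vartheta_0)|<1$). On that arc, $S$ is negative at the left endpoint and positive at the right one. A downward crossing in between therefore forces two further zeros in that arc. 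Together with one zero in each of the other $2m-1$ arcs, this gives at least $2m+2$ zeros per period. That contradicts the bound $2m$ for the nonzero polynomial $S$, which is nonzero because $\sup|R|<1$.
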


\begin{proof}(Proof of Lemma~\ref{lem:chain})
Induct on $|\balpha|$. For $\balpha=0$, we have $P_{0,0}\equiv 1$.

Suppose \eqref{eq:chain-expansion} holds for some $\balpha$. Next we fix $j\in\{1,\dots,d\}$ and consider $\balpha+\be_j$ where $\be_j$ is the vector with one at $j$-th entry and zero elsewhere. Applying $\partial_{\theta_j}$ we have
\[
\partial_{\theta_j}\partial_\btheta^\balpha g
=\sum_{0\le\bbeta\le\balpha} \big(\partial_{\theta_j}P_{\balpha,\bbeta}\big)\,\partial_x^\bbeta f(\cos\btheta)
-\sum_{0\le\bbeta\le\balpha} (\sin\theta_j)P_{\balpha,\bbeta}(\btheta)\,\partial_x^{\bbeta+\be_j} f(\cos\btheta),
\]
since $\partial_{\theta_j}=\!-\sin\theta_j\,\partial_{x_j}$.
Relabel $\bbeta\mapsto \bbeta-\be_j$ in the second sum (interpreting $P_{\balpha,\bbeta-\be_j}\equiv0$ if $\beta_j=0$) 
to obtain the representation for $\balpha+\be_j$ with
\[
  P_{\balpha+e_j,\bbeta}=\partial_{\theta_j}P_{\balpha,\bbeta} - (\sin\theta_j)P_{\balpha,\bbeta-\be_j} \qquad \forall \; 0\leq \bbeta\leq \balpha.
\]
From the recursion formula, $P_{\balpha,\bbeta}$'s are trigonometric polynomials of degree
$\le|\balpha|$ in each coordinate $\theta_j$.

For the uniform upper bound, define
\(
  M_t \coloneqq  \max\{\sup_\btheta|P_{\balpha,\bbeta}(\btheta)|: |\balpha|=t,\,0\le\bbeta\le\balpha\}.
\)
Using Lemma~\ref{lem:bernstein} in the $j$-th variable and $|\sin\theta_j|\le1$,
\[
  \sup_\btheta|P_{\balpha+\be_j,\bbeta}|
  \le \sup_\btheta|\partial_{\theta_j}P_{\balpha,\bbeta}| + \sup_\btheta|P_{\balpha,\bbeta-e_j}|
  \le |\balpha| M_{|\balpha|} + M_{|\balpha|} \le (|\balpha|{+}1)M_{|\balpha|}.
\]
This yields $M_{t+1}\le (t+1)\,M_t$, and $M_0=1$, hence
$M_t\le t!$. This proves \eqref{eq:P-bound}.
\end{proof}

\begin{proposition}[Uniform bound for $\|\partial_\btheta^\balpha g\|_{L^2([0,\pi]^d)}$]\label{prop:gk}
For any $f\in C^k([-1,1]^d)$, $g=f(\cos \btheta)$, and index $\balpha$ with $|\balpha|\le k$,
\[
  \|\partial_\btheta^\balpha g\|_{L^2([0,\pi]^d)}
  \le |\balpha|!\, \left(1 + \frac{k}{d}\right)^d\,
  \max_{|\bbeta|\le k}\|\partial_x^\bbeta f\|_{L^\infty([-1,1]^d)}.
\]
\end{proposition}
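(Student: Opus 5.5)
The plan is to combine the chain-rule expansion of Lemma~\ref{lem:chain} with the triangle inequality in $L^2([0,\pi]^d,\pi^{-d}\dd\btheta)$. By \eqref{eq:chain-expansion},
\[
\partial_\btheta^\balpha g(\btheta)=\sum_{0\le\bbeta\le\balpha}P_{\balpha,\bbeta}(\btheta)\,\partial_x^\bbeta f(\cos\btheta),
\]
so Minkowski's inequality gives
\[
\|\partial_\btheta^\balpha g\|_2\le\sum_{0\le\bbeta\le\balpha}\|P_{\balpha,\bbeta}\,(\partial_x^\bbeta f)(\cos\cdot)\|_2 .
\]
Since $\pi^{-d}\dd\btheta$ is a probability measure on $[0,\pi]^d$, each $L^2$ norm is bounded by the sup norm. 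Using \eqref{eq:P-bound} and $|\bbeta|\le|\balpha|\le k$, each summand is at most
\[
|\balpha|!\,\|\partial_x^\bbeta f\|_{L^\infty([-1,1]^d)}\le |\balpha|!\max_{|\bbeta|\le k}\|\partial_x^\bbeta f\|_{L^\infty}.
\]
Continuity of $\partial^\balpha_\btheta g$, which is needed for the norms to make sense, comes from $f\in C^k$ together with Lemma~\ref{lem:parseval-deriv-d}.

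It remains to count the terms. The number of multi-indices $\bbeta$ with $0\le\bbeta\le\balpha$ coordinatewise is $\prod_{i=1}^d(\alpha_i+1)$. By AM--GM this is at most
\[
\Big(\tfrac{1}{d}\textstyle\sum_i(\alpha_i+1)\Big)^d=\Big(1+\tfrac{|\balpha|}{d}\Big)^d\le\Big(1+\tfrac kd\Big)^d,
\]
and multiplying by the per-term bound gives exactly the stated inequality.

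The only step needing care is the counting bound; AM--GM handles it directly. The analytic difficulty, namely uniform control of the trigonometric coefficients, is already contained in Lemma~\ref{lem:chain} via Bernstein's inequality. Using the sup norm rather than a more refined $L^2$ estimate costs nothing, because the weight is a probability measure.
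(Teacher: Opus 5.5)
Your proposal is correct and follows essentially the same route as the paper: the chain-rule expansion of Lemma~\ref{lem:chain}, the bound $|P_{\balpha,\bbeta}|\le|\balpha|!$, domination of the normalized $L^2$ norm by the sup norm, and the count $\prod_i(\alpha_i+1)\le(1+k/d)^d$ via AM--GM (the paper calls it Jensen's inequality). The only cosmetic difference is that you apply Minkowski in $L^2$ before passing to sup norms, whereas the paper first bounds the $L^2$ norm by the $L^\infty$ norm and then uses the triangle inequality there.
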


\begin{proof}
Using the expansion formula \eqref{eq:chain-expansion}, we have 
\begin{align*}
  \|\partial_\btheta^\balpha g\|_{L^2([0,\pi]^d)} 
  & \leq \|\partial_\btheta^\balpha g\|_{L^\infty([0,\pi]^d)}  \\
  & \le \sum_{0\le\bbeta\le\balpha} \sup_\btheta|P_{\balpha,\bbeta}(\btheta)|\cdot 
       \|\partial_x^\bbeta f(\cos\btheta)\|_{L^\infty([0,\pi]^d)} \\
  & \leq \sum_{0\le\bbeta\le\balpha} \sup_\btheta|P_{\balpha,\bbeta}(\btheta)|\cdot
       \|\partial_x^\bbeta f\|_{L^\infty([-1,1]^d)}.
\end{align*}
Note that here the last inequality is due to our $L^2$ measure being normalized. As the number of $\bbeta$ with $0\le\bbeta\le\balpha$ is $\prod_{i=1}^d(\alpha_i+1)\le ( 1 + \frac{k}{d})^d$ by Jensen's inequality.
Applying the uniform bound \eqref{eq:P-bound} concludes the proof.
\end{proof}

\begin{proof}(Proof of Theorem~\ref{thm:main-d})
We have \begin{align}
    \sum_{K}\|K\|_2^{2k}\,c_K^2
    & \;\leq\; \left(\sum_{|\balpha|=k}\frac{k!}{\balpha!}\right)\cdot \max_{|\balpha|=k} \sum_{K} K^{2\balpha} c_K^2
    \\
    & \;=\; \left(\sum_{|\balpha|=k}\frac{k!}{\balpha!}\right) \cdot \max_{|\balpha|=k}\|\partial_\btheta^\balpha g\|_2^2 \\
    &\;\le\; C_{d,k}'\, \Big(\max_{|\bbeta|\le k}\|\partial_x^\bbeta f\|_{L^\infty([-1,1]^d)}\Big)^2
    \label{eq: inqualities}.
\end{align}
Here, the first inequality above is from
\[
  \|K\|_2^{2k}=\Big(\sum_{i=1}^d a_i\Big)^k
  =\sum_{|\balpha|=k}\frac{k!}{\balpha!}\ba^\balpha
  \le \sum_{|\balpha|=k} \frac{k!}{\balpha !} \ba^\balpha = \sum_{|\balpha|=k} \frac{k!}{\balpha !} K^{2\balpha}.
\]
The second step in \eqref{eq: inqualities} is showed in Lemma~\ref{lem:parseval-deriv-d} and the last step is proved in Proposition~\ref{prop:gk} with 
\begin{align*}
   C_{d,k}' &=  \sum_{|\balpha| =k } \frac{k!}{\balpha !} \cdot (k!)^2 \left(1 + \frac{k}{d}\right)^{2d}  = d^k (k!)^2 \left(1 + \frac{k}{d}\right)^{2d},
\end{align*}
where we used the multinomial theorem $\sum_{|\balpha|=k} \frac{k!}{\balpha !} = d^k$.
\end{proof}

\section{Multivariate Jackson's Theorem for $k$-smooth functions}
\label{sec:Jackson}

To show the multivariate Jackson's Theorem, we first consider the measure $\pi^{-1}\dd x$ on $[0,\pi]$ and introduce a powerful tool in the construction: 
\begin{lemma}[Jackson kernel, \citep{jackson1912approximation}]
\label{lem: Jackson kernel}
    For integer $m, k\geq 1$, there exists a even kernel function $J_1(x):\R\to \R$ with the following properties:
    \begin{enumerate}
        \item (Trigonometric polynomial) $J_1(x) = \sum_{i=0}^m \widehat J_1(i)\cos(ix)$;
        \item (Positivity) $J_1(x)\geq 0$;
        \item (Normalization) $\frac{1}{\pi}\int_{0}^\pi J_1(x)\dd x =1$;
        \item (Bounded moments) $\frac{1}{\pi}\int_{0}^\pi |x|^\ell J_1(x)\dd x \leq \frac{C}{m^\ell}$ for $\ell \leq k$.
    \end{enumerate}
\end{lemma}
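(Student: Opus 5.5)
We construct $J_1$ explicitly as a normalized power of the Fejér kernel, following \citep{jackson1912approximation}. Fix an integer $s$ depending only on $k$ (say $s=\lceil (k+2)/2\rceil$, so that $2s\ge k+2$) and let $p=\lfloor m/s\rfloor+1$. Set
\[
F_p(x)=\left(\frac{\sin(px/2)}{\sin(x/2)}\right)^{2},\qquad J_1(x)=\lambda_{m}\,F_p(x)^{s},
\]
with $\lambda_m$ chosen so that $\frac1\pi\int_0^\pi J_1=1$. We restrict to $m\ge s$ (the finitely many small $m$ are absorbed into $C$, see below). Properties (1)--(3) are then immediate. $F_p(x)=\sum_{|j|<p}(p-|j|)e^{ijx}$ is a nonnegative, even trigonometric polynomial of degree $p-1$. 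Hence $F_p^s$ is even, nonnegative, and of degree $s(p-1)\le m$, so it is a cosine polynomial $\sum_{i=0}^m\widehat J_1(i)\cos(ix)$, padding with zero coefficients where necessary. Positivity is inherited, and normalization holds by the choice of $\lambda_m$.

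The substance is the moment bound (4), which reduces to two one-sided estimates on $F_p$ over $[0,\pi]$.
\begin{itemize}
\item \emph{Lower bound near the origin.} On $[0,1/p]$ we have $\sin(px/2)\ge px/\pi$ (by concavity, since $px/2\le 1/2<\pi/2$) and $\sin(x/2)\le x/2$. Hence $F_p(x)\ge c\,p^2$, so
\[
\int_0^\pi F_p^s\ \ge\ c^s p^{2s-1}.
\]
\item \emph{Upper bound everywhere.} Using $F_p\le p^2$ together with $\sin(x/2)\ge x/\pi$ on $[0,\pi]$, we get $F_p(x)\le \min\{p^2,\pi^2/x^2\}$. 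Splitting the integral at $x=1/p$,
\[
\int_0^\pi x^\ell F_p(x)^s\,dx\ \le\ \int_0^{1/p}x^\ell p^{2s}\,dx+\pi^{2s}\int_{1/p}^\infty x^{\ell-2s}\,dx\ \lesssim_s\ p^{2s-\ell-1}.
\]
The tail integral converges because $2s-\ell\ge 2$ for all $\ell\le k$, which is exactly why $s$ was chosen with $2s\ge k+2$.
\end{itemize}
Dividing the upper bound by the lower bound gives
\[
\frac1\pi\int_0^\pi x^\ell J_1(x)\,dx\ \le\ C_{k}\,p^{-\ell}\ \le\ C'_k\,m^{-\ell},
\]
since $p\ge m/s$ and $s$ depends only on $k$. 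For $m<s$ a trivial kernel such as $J_1\equiv 1$ works: $\pi^\ell\le \pi^\ell s^\ell m^{-\ell}$, a bound absorbed into $C$. If the constant $(Ck)^k$ needed later in Lemma~\ref{lem:Jackson} is to be tracked, we record that $C_k$ grows like $(c\,s)^{\ell}\le (Ck)^k$.

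The main obstacle is matching the degree budget, $s(p-1)\le m$, against a choice of $s$ that makes $x^{\ell}F_p^s$ integrable for every $\ell\le k$. Getting this right ensures the constant depends only on $k$ and not on $m$.
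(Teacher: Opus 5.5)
Your construction is correct and is the classical Jackson kernel, a normalized power $\lambda_m F_p^s$ of the Fej\'er kernel, which is exactly what the paper relies on: it cites \citep{jackson1912approximation} and gives no proof of its own. The degree count $s\lfloor m/s\rfloor\le m$, the estimates $F_p\ge 4p^2/\pi^2$ on $[0,1/p]$ and $F_p\le\min\{p^2,\pi^2/x^2\}$, and the choice $2s\ge k+2$ are all sound, so the constant depends only on $k$. You also do not need to treat $m<s$ separately: then $p=1$, so your formula already gives $J_1\equiv 1$, and $p^{-\ell}=1\le (s/m)^\ell$ still holds.
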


\begin{lemma}[Jackson's Theorem for 1-dimensional $k$-smooth functions, \citep{jackson1912approximation}]
\label{lem:Jackson_k=1}
Let $m,k\geq 1$ and a even periodic function $g$ with period $2\pi$ satisfies $\|g^{(k)}\|_{L^\infty([0,\pi])} \leq 1$. Then for every integer $m\geq k$,
\[\tilde g_m(x) \coloneqq  g*J_1 = \frac1\pi \int_{0}^\pi J_1(y)g(x-y)\dd y,\] satisfies
\[ \|g-\tilde g_m\|_{L^\infty([0,\pi])} \ \le\ \frac{C_1^k}{m^k}.\]
Moreover, if we write the Fourier expansion 
\[g(x) = \sum_{i=0}^\infty c_i \cos(i x), \qquad \tilde g_m(x)=\sum_{i=0}^m \tilde c_i \cos(i x)\]
then $|\tilde c_i|\leq |c_i|, \;\forall \; 0\leq i\leq m.$
\end{lemma}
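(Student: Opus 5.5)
We prove the 1-dimensional Jackson estimate (Lemma~\ref{lem:Jackson_k=1}) by taking the kernel $J_1$ from Lemma~\ref{lem: Jackson kernel} (with its moment bounds holding up to order $k$, say a suitable power of the Fejér kernel) and combining a Taylor expansion of $g$ with the convolution structure. Since $J_1$ is even, nonnegative and normalized, we have
\[
\tilde g_m(x)-g(x)=\frac1\pi\int_0^\pi J_1(y)\,\tfrac12\big(g(x-y)+g(x+y)-2g(x)\big)\,\dd y .
\]
A plain Taylor expansion to order $k$ would leave the odd-order terms with nonvanishing moments. So a single kernel gives only $O(m^{-2})$ when $k\ge 3$, and a higher-order construction is needed.

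To get order $k$, we replace $J_1$ by the finite-difference combination used by Jackson. Set
\[
\tilde g_m(x)=\frac1\pi\int_0^\pi J_1(y)\Big(g(x)-(-1)^{k}\Delta_y^{k}g(x)\Big)\dd y ,
\]
where $\Delta_y^k g(x)=\sum_{j=0}^k\binom kj(-1)^{k-j}g(x+jy)$. Because $g$ is even, one can equivalently use symmetric differences. This is still a convolution of $g$ with a trigonometric polynomial: the combination of the dilated kernels $J_1(y/j)$ has degree at most $km$, so we rescale $m$ by $k$ at the start. The error satisfies $|\Delta_y^k g(x)|\le |y|^k\|g^{(k)}\|_\infty$, and so
\[
\|g-\tilde g_m\|_\infty\le \frac1\pi\int_0^\pi |y|^kJ_1(y)\,\dd y\le \frac{C}{(m/k)^k}.
\]
This gives $C_1^k/m^k$ with $C_1$ linear in $k$, consistent with $C_k^{\mathrm{Jac}}\le (Ck)^k$.

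For the coefficient claim, convolution acts diagonally on the cosine basis: the coefficient of $\cos(ix)$ is multiplied by the cosine coefficient $\widehat K(i)$ of the effective kernel $K$. The claim $|\tilde c_i|\le|c_i|$ therefore reduces to $|\widehat K(i)|\le 1$. For the plain positive kernel $J_1$ this follows from positivity and normalization, since $|\widehat J_1(i)|\le \frac1\pi\int J_1=1$. The main obstacle is the finite-difference combination, which has signed weights. There I would first try to verify directly that the effective multiplier is $1-(1-\widehat J_1(i))^k$-like and stays in $[-1,1]$ using $0\le$ Fejér-type multipliers $\le1$. If that fails, I would fall back to an iterated convolution $g\mapsto g-(I-J_1*)^k g$, whose multiplier $1-(1-\lambda_i)^k$ lies in $[0,1]$ whenever $\lambda_i\in[0,1]$. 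That fallback requires choosing $J_1$ with nonnegative Fourier multipliers, for example a squared Fejér kernel.
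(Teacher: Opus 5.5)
Your observation that a single nonnegative kernel saturates at order $m^{-2}$ is correct, and it applies to the statement itself, not only to a naive attempt. The paper's proof keeps $\tilde g_m=g*J_1$ with the nonnegative kernel of Lemma~\ref{lem: Jackson kernel} and cites Jackson for the sup-norm bound. It then obtains $|\tilde c_i|\le|c_i|$ from $|\lambda_i|\le\frac1\pi\int_0^\pi J_1=1$, where $\lambda_\ell:=\frac1\pi\int_0^\pi J_1(y)\cos(\ell y)\,\dd y$.

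Now take $g(x)=\cos x$, which satisfies the hypothesis for every $k$. Then $|g(0)-\tilde g_m(0)|=1-\lambda_1=\frac1\pi\int_0^\pi J_1(y)(1-\cos y)\,\dd y$. By the Fej\'er--Egerv\'ary--Sz\'asz bound for nonnegative cosine polynomials of degree $m$, this is at least $1-\cos\frac{\pi}{m+2}\ge\frac{2}{(m+2)^2}$. Hence $C_1^k/m^k$ fails for every $k\ge 3$ once $m$ is large, and the positivity the paper relies on is exactly the obstruction.

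A small correction to your reasoning: after symmetrization it is the even-order Taylor terms that survive, starting with $\frac12 g''(x)\cdot\frac1\pi\int_0^\pi y^2J_1(y)\,\dd y$. The condition $J_1\ge 0$ is what prevents them from cancelling. So leaving positive kernels is necessary. The right target is the corrected claim that \emph{some} cosine polynomial of degree at most $m$ attains the rate with $|\tilde c_i|\le|c_i|$.

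As written, your proposal does not yet prove that claim. Your primary construction fails the coefficient bound. In symmetric form, the finite-difference operator multiplies $\cos(ix)$ by $\sum_{j=1}^k(-1)^{j+1}\binom{k}{j}\lambda_{ij}$. This involves dilated frequencies, not powers of $\lambda_i$, so the ``$1-(1-\widehat J_1(i))^k$-like'' heuristic is off. For $k=2$ the multiplier is $2\lambda_i-\lambda_{2i}=1+\frac2\pi\int_0^\pi J_1(y)\cos(iy)\bigl(1-\cos(iy)\bigr)\,\dd y$. At $i=1$ the correction equals $s_2+O(s_4)$, where $s_p:=\frac1\pi\int_0^\pi y^pJ_1(y)\,\dd y$, and this is positive for large $m$ with the usual Jackson kernels. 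So the multiplier exceeds $1$, and $|\tilde c_1|>|c_1|$.

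Drop that route and commit to your fallback $\tilde g_m=g-(I-J_1*)^kg$, with $J_1$ a normalized squared Fej\'er kernel of degree $m$. Its nonnegative cosine coefficients give $\lambda_i\in[0,1]$, so the degree bound and the multiplier $1-(1-\lambda_i)^k\in[0,1]$ hold as you say. What is missing is the approximation rate for \emph{this} operator; your finite-difference estimate does not transfer to it. The fix is short:
\begin{itemize}
\item[(i)] $T:=I-J_1*$ commutes with differentiation.
\item[(ii)] $\|Th\|_\infty\le 2\|h\|_\infty$, $\|Th\|_\infty\le s_1\|h'\|_\infty$, and $\|Th\|_\infty\le\frac12 s_2\|h''\|_\infty$, the last by symmetrized Taylor.
\item[(iii)] Apply the second-order bound to $\lfloor k/2\rfloor$ factors, the first-order bound to one more factor if $k$ is odd, and the trivial bound to the remaining $\lfloor k/2\rfloor$ factors.
\end{itemize}
This gives $\|g-\tilde g_m\|_\infty=\|T^kg\|_\infty\le s_2^{\lfloor k/2\rfloor}s_1^{\,k-2\lfloor k/2\rfloor}\|g^{(k)}\|_\infty\le C^km^{-k}$, since the squared Fej\'er kernel has $s_1\lesssim m^{-1}$ and $s_2\lesssim m^{-2}$.

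Compared with the paper, the price is that your operator is no longer an $L^\infty$ contraction; its norm is only bounded by $2^k-1$. The $d$-dimensional telescoping in the proof of Lemma~\ref{lem:Jackson} uses $\|\mathcal A_i G\|_\infty\le\|G\|_\infty$, so the multivariate constant has to be redone. This cost cannot be avoided. A convolution operator that fixes constants and is an $L^\infty$ contraction must have a nonnegative kernel, and the computation above then saturates it at order $m^{-2}$.
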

\begin{proof}
    The approximation in $\|\cdot\|_{L^\infty([0,\pi])}$ norm is already proved in \citep{jackson1912approximation} with the properties in Lemma~\ref{lem: Jackson kernel}. Here, we simply complete the proof for the bound on the coefficients. For the Jackson's kernel $J_1$, using the property in Lemma~\ref{lem: Jackson kernel}, we have for $0\leq i\leq m$,
    \[|\tilde c_i| = |c_i| \cdot \left|\frac 1\pi \int_{0}^\pi J_1(x)\cdot \cos(ix) \dd x \right| \leq |c_i|\cdot \frac 1\pi \int_{0}^\pi J_1(x) \dd x = |c_i|.\]
\end{proof}

We are now ready to extend the result to general $ d$-dimensional space and prove Lemma~\ref{lem:Jackson}.

\begin{proof}(Proof of Lemma~\ref{lem:Jackson})
    For any even periodic function $G(\btheta) = G(\theta_1,\dots, \theta_d)$, define operator $\mathcal A_i$ such that $\mathcal A_i(G)$ is to apply Lemma~\ref{lem:Jackson_k=1} on the variable $\theta_i$:
    \[\mathcal A_i(G)(\theta_1,\dots, \theta_d) = \frac1\pi \int_{0}^\pi J_1(y) G(\theta_1, \dots ,\theta_{i-1}, \theta_i-y,\theta_{i+1},\dots \theta_d)\dd y.\]
    Then $\mathcal A_i(G)$ is a trigonometric polynomial of degree $m$ in $\theta_i$. Note that $\mathcal A_i$ is linear. Therefore, if $g$ is a trigonometric polynomial of degree $m$ in $\theta_j$, then $\mathcal A_i(g)$ is a trigonometric polynomial of degree $m$ in both $\theta_i, \theta_j$. Moreover, since $J_1$ is normalized, we have
    \[\|\mathcal A_i(G)\|_{L^\infty([0,\pi]^d)} \leq \|G\|_{L^\infty([0,\pi]^d)} \cdot \frac{1}{\pi}\int_0^\pi J_1(x)\dd x = \|G\|_{\infty}.\]

    First, we will consider $g(\btheta) = f(\cos \btheta)$ as an even periodic function in each coordinate $\theta_i$. We will run the approximation process in each coordinate. Define $\tilde g(\btheta) = \mathcal A_d(\mathcal A_{d-1}(\cdots \mathcal (A_1(g(\btheta))))$. By our observation above, $\tilde g$ is a trigonometric polynomial of degree $m$ in all $\theta_i$, $1\leq i\leq d$. Hence, we can write
    \[\tilde g = \sum_{K\in \{0,\dots, m\}^d} \tilde c_K \prod_{i=1}^d \cos(k_i \theta_i).\]
    And we aim to show the $\|\cdot \|_{L^\infty([0,\pi]^d)}$ approximation and the entry-wise bound for $\tilde c_K$.

    By Lemma~\ref{lem:Jackson_k=1}, for any even periodic function $G(\btheta)$, there is \
    \[\|\mathcal A_i(G) - G\|_{L^\infty([0,\pi]^d)} \leq \frac{C_1^k}{m^k} \cdot \|\partial_i^k G\|_{L^\infty([0,\pi]^d)}\]
    and 
    \[\|\partial_j^k\mathcal A_i(G)\|_{L^\infty([0,\pi]^d)} = \|\mathcal A_i(\partial_j^kG)\|_{L^\infty([0,\pi]^d)} \leq \|\partial_j^k G\|_{L^\infty([0,\pi]^d)}, \quad \text{when } i\neq j.\]
    Therefore, by a telescoping sum argument, we will get 
    \[\|Q-g\|_{L^\infty([0,\pi]^d)}\leq \sum_{i=1}^d \frac{C_1^k}{m^k} \cdot \|\partial_i^k g\|_{L^\infty([0,\pi]^d)} \leq \frac{dC_1^k}{m^k} \cdot \max_{|\balpha|=k}\|\partial^\balpha g\|_{L^\infty([0,\pi]^d)}.\]
    
	For the entry-wise bound on the Fourier coefficients, note that
    \[Q(\btheta) = \frac{1}{\pi^d}\int_{[0,\pi]^d}\prod_{i=1}^d J_1(y_i) G(\theta_1-y_1, \dots, \theta_d-y_d) \dd \by.\]
    And we can compute the coefficients
    \begin{align*}
        \tilde c_K &= \frac{2^{\mathrm{nnz}(K)}}{(2\pi)^d} \int_{[-\pi,\pi]^d} Q(\btheta)\prod_{i=1}^d \cos(k_i \theta_i) \dd \btheta\\ 
        & = \frac{2^{\mathrm{nnz}(K)}}{(2\pi)^d} \int_{[-\pi,\pi]^d} \frac{1}{\pi^d}\int_{[0,\pi]^d}\prod_{i=1}^d J_1(y_i) G(\theta_1-y_1, \dots, \theta_d-y_d) \dd \by\prod_{i=1}^d \cos(k_i \theta_i) \dd \btheta \\
        & = \frac{1}{\pi^d} \int_{[0,\pi]^d} \prod_{i=1}^d J_1(y_i) \; \frac{2^{\mathrm{nnz}(K)}}{(2\pi)^d}\int_{[-\pi,\pi]^d}G(\theta_1-y_1, \dots, \theta_d-y_d) \prod_{i=1}^d \cos(k_i \theta_i) \dd \btheta \; \dd \by \\
        & = \frac{1}{\pi^d} \int_{[0,\pi]^d} \prod_{i=1}^d J_1(y_i) \; \frac{2^{\mathrm{nnz}(K)}}{(2\pi)^d}\int_{[-\pi,\pi]^d}G(u_1, \dots, u_d) \prod_{i=1}^d \cos(k_i (u_i+y_i)) \dd \bu \;\dd \by \\ 
        & \overset{\text{(a)}}{=} \frac{1}{\pi^d} \int_{[0,\pi]^d} \prod_{i=1}^d J_1(y_i)\cos(k_i y_i)  \dd \by \cdot \frac{2^{\mathrm{nnz}(K)}}{(2\pi)^d}\int_{[-\pi,\pi]^d}G(u_1, \dots, u_d) \prod_{i=1}^d \cos(k_i u_i) \dd \bu \\
        & = \frac{1}{\pi^d} \int_{[0,\pi]^d} \prod_{i=1}^d J_1(y_i)\cos(k_i y_i)  \dd \by \cdot c_K \\
        & = c_K\prod_{i=1}^d \Big(\frac 1\pi \int_0^\pi J_1(y_i)\cos(k_i y_i)  \dd y_i\Big).
    \end{align*}
    Here in the step (a), we expand $\cos(k_i (u_i+y_i)) = \cos(k_i u_i)\cos(k_i y_i) - \sin(k_i u_i)\sin(k_i y_i)$, and we drop the sine term as $G$ is an even function and its integral against $\sin(k_i y_i)$ would vanish. And then $|\tilde c_K|\leq |c_K|$ follows since 
    \[\Big|\frac 1\pi \int_0^\pi J_1(y_i)\cos(k_i y_i)  \dd y_i\Big| \leq \frac 1\pi \int_0^\pi J_1(y_i) \dd y_i =1.\]

    Finally, we will complete the proof by proving the properties for $f(\bx)$. By changes of variables we have $f(\bx) = \sum_{K} c_K \overline T_K(\bx)$, and we define 
    \[\tilde f(\bx) = \sum_{K\in \{0,\dots ,m \}^d} \tilde c_K \overline T_K(\bx).\]
    Then $|\tilde c_K|\leq |c_K|$ still holds. Moreover, we have 
    \[\|f-\tilde f\|_{L^\infty([-1,1]^d)} = \|g-\tilde g\|_{L^\infty([0,\pi]^d)} \leq \frac{dC_1^k}{m^k} \cdot \max_{|\balpha|=k}\|\partial^\balpha g\|_{L^\infty([0,\pi]^d)}.
    \]
    In the proof of Proposition~\ref{prop:gk}, we have shown that 
    \[\|\partial^\balpha g\|_{L^\infty([0,\pi]^d)} \leq |\balpha|!\cdot \left(1 + \frac{k}{d}\right)^d\,
    \max_{|\bbeta|\le k}\|\partial_x^\bbeta f\|_{L^\infty([-1,1]^d)} \leq k!\cdot e^k.\]
    Taking the maximum over $\balpha$ concludes the proof. 
\end{proof}


\section{Proof of Theorem~\ref{thm: IPM_Chebyshev} }\label{sec:IPM_proof}
\begin{proof} For any function $f\in \mathcal F_k$, let
 $\tilde f=\sum_K\tilde c_K \overline T_K$ be the multivariate Chebyshev polynomial in  in Lemma~\ref{lem:Jackson}. We have
    \begin{align*}
        \left|\<f,p-q\>\right| &\leq \left|\<\tilde{f}, p-q\>\right| + \left|\<f- \tilde{f}, p-q\>\right|\\
        & \leq \left|  \<\sum_{K}\tilde{c}_K\overline{T}_K(x), p-q\>\right|  + \left|\<f- \tilde{f}, p\>\right| + \left|\<f- \tilde{f}, q\>\right|\\
        &\leq \sum_{K} |\tilde c_K| \cdot \left| \<\overline{T}_K(x), p-q\>\right| + \frac{d C_k^{\mathrm{Jac}}}{m^k} + \frac{dC_k^{\mathrm{Jac}}}{m^k}.
    \end{align*}
    For the first term, note that $\<\overline{T}_K(x), p-q\> =  \E_{X\sim p} \overline{T}_K(X) - \E_{X\sim q} \overline{T}_K(X)$ is the difference of the $K$-th Chebyshev moments of $p,q$. By our assumption as well as the Cauchy-Schwarz inequality, 
    \begin{align*}
         \sum_{K} |\tilde c_K| \cdot \left| \<\overline{T}_K(x), p-q\>\right| &\leq \sum_{K\in \Z_{\geq0}^d} |c_K| \cdot \left| \<\overline{T}_K(x), p-q\>\right|\\
         & = \sum_{K\neq 0} \|K\|_2^{k}|\tilde c_K| \cdot \frac{\left| \<\overline{T}_K(x), p-q\>\right|}{\|K\|_2^{k}} \\
         &\leq \left(\sum_{K\neq 0} \|K\|_2^{2k}|\tilde c_K|^2\right)^{1/2} \left(\sum_{K\neq 0}\frac{\left| \<\overline{T}_K(x), p-q\>\right|^2}{\|K\|_2^{2k}}\right)^{1/2}\\
         &\leq \sqrt{C_{d,k}'} \cdot \Gamma,
    \end{align*}
    where in the last equality we use Theorem~\ref{thm:main-d}, which provides the decay rate of the Chebyshev moments of any $k$-th differential function.
\end{proof}

\section{Estimation for $S$.}\label{sec:S} 

\begin{lemma}\label{lem: est_S}
Let \[S\coloneqq \sum_{K \in \{0,\dots, m\}^d \setminus \{\0\}} \frac{1}{\|K\|_2^k}.\] Then the following holds:
    \begin{equation*}
  S \;\le\;
  \begin{cases}
    d\,2^{d-1}\!\left(1+\dfrac{m^{\,d-k}-1}{d-k}\right), & \text{if } k<d,\\
    d\,2^{d-1}\,(1+\log m), & \text{if }k=d,\\
    d\,2^{d-1}\!\left(1+\dfrac{1}{k-d}\right), &\text{if }k>d.
  \end{cases}
\end{equation*}
\end{lemma}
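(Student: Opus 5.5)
We bound $S$ by grouping the multi-indices $K\in\{0,\dots,m\}^d\setminus\{\0\}$ according to their largest coordinate, $\ell \coloneqq \|K\|_\infty \in \{1,\dots,m\}$. Since $\|K\|_2\ge \|K\|_\infty=\ell$, each term satisfies $\|K\|_2^{-k}\le \ell^{-k}$. The number of $K\in\{0,\dots,m\}^d$ with $\|K\|_\infty=\ell$ equals $(\ell+1)^d-\ell^d$. This is at most $d(\ell+1)^{d-1}$ by the mean value theorem, and hence at most $d\,2^{d-1}\ell^{d-1}$ because $\ell+1\le 2\ell$ for $\ell\ge1$. This gives
\begin{equation*}
S\;\le\; d\,2^{d-1}\sum_{\ell=1}^m \ell^{\,d-1-k}.
\end{equation*}

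It then remains to bound the one-dimensional sum $\sum_{\ell=1}^m \ell^{d-1-k}$ in the three regimes, using the $\ell=1$ term plus an integral comparison for the rest.

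\emph{Case $k=d$.} The sum is the harmonic number $\sum_{\ell=1}^m \ell^{-1}\le 1+\int_1^m x^{-1}\,\dd x = 1+\log m$.

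\emph{Case $k>d$.} The exponent $d-1-k\le -2<-1$, so the summand is decreasing and
\begin{equation*}
\sum_{\ell=1}^m \ell^{d-1-k}\le 1+\int_1^\infty x^{d-1-k}\,\dd x = 1+\frac{1}{k-d}.
\end{equation*}

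\emph{Case $k<d$.} The exponent $d-1-k$ is nonnegative, so the summand is nondecreasing and the naive integral comparison goes the wrong way. This case is the main point requiring care. When $d-1-k=0$ the sum equals $m$, which coincides with $1+\frac{m^{d-k}-1}{d-k}$ at $d-k=1$, so the bound holds. When $d-1-k\ge1$ we instead get $\sum_{\ell=1}^m \ell^{d-1-k}\le \int_1^{m+1}x^{d-1-k}\,\dd x=\frac{(m+1)^{d-k}-1}{d-k}$, which is slightly larger than the stated $1+\frac{m^{d-k}-1}{d-k}$.

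To reach the exact stated form I would therefore sharpen the counting step rather than the sum. The plan is to bound the shell count by $d\,2^{d-1}\ell^{d-1}$ but weight terms by $\|K\|_2^{-k}$ more carefully, or simply to absorb the $(m+1)$-versus-$m$ discrepancy into the constant $2^{d-1}$. For instance, one can use $(\ell+1)^d-\ell^d\le d(\ell+1)^{d-1}$ directly and sum $\sum_{j=2}^{m+1} j^{d-1-k}$ against a shifted integral. If the exact constant proves awkward, a bound of the same order $d\,2^{d-1}\cdot O\!\big(m^{d-k}/(d-k)\big)$ suffices for the application in Section~\ref{sec: Accuracy}, since only the order in $m$ and the $c^d$-type dependence are used there.
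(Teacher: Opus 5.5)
Your decomposition is the same as the paper's: shells $\|K\|_\infty=\ell$, the bound $\|K\|_2^{-k}\le \ell^{-k}$, the count $|\mathcal A_\ell|\le d\,2^{d-1}\ell^{d-1}$, and an integral comparison. Your cases $k=d$, $k>d$ and $k=d-1$ are correct.

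Your suspicion about $k\le d-2$ is well founded, and it points to an actual error in the paper's own proof. There the authors assert $\sum_{j=1}^m j^{\alpha}\le 1+\int_1^m x^{\alpha}\,\dd x$ for all $\alpha>-1$. For the relevant integer exponents $\alpha=d-1-k\ge 1$ the summand is increasing, so this fails as soon as $m\ge 2$ (e.g.\ $\alpha=1$, $m=2$ gives $3>5/2$).

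Your proposal does not close this case either, so for $k\le d-2$ there is a genuine gap. The options you list are not carried out. Crude absorption has no room: with the count $d(\ell+1)^{d-1}$, the $\ell=1$ term alone equals $d\,2^{d-1}$, which is exactly the additive constant in the target. Falling back to ``a bound of the same order'' does not prove the lemma as stated.

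The missing step is to keep the slack you discard in the count. Use $|\mathcal A_\ell|\le d(\ell+1)^{d-1}$, so $S\le d\sum_{\ell=1}^m(\ell+1)^{d-1}\ell^{-k}$, and set the $\ell=1$ term $d\,2^{d-1}$ aside. For $\ell\ge 2$ one has $\frac{\ell+1}{2}\le \ell-\frac12$ and $\frac{\ell+1}{2\ell}\le 1$. Hence, with $\alpha=d-1-k\ge 0$,
\[
(\ell+1)^{d-1}\ell^{-k}=2^{d-1}\Big(\frac{\ell+1}{2}\Big)^{\alpha}\Big(\frac{\ell+1}{2\ell}\Big)^{k}\le 2^{d-1}\Big(\ell-\frac12\Big)^{\alpha}\le 2^{d-1}\int_{\ell-1}^{\ell}x^{\alpha}\,\dd x ,
\]
where the last step is the midpoint inequality for the convex function $x\mapsto x^{\alpha}$. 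Summing over $2\le \ell\le m$ and multiplying by $d$ gives $d\,2^{d-1}\frac{m^{d-k}-1}{d-k}$. Adding the $\ell=1$ term yields exactly the stated bound for every $k<d$, including your $k=d-1$ case. With this repair the lemma holds as stated, and both your argument and the paper's become complete.
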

\begin{proof}
For $j=1,\dots,m$, define the shell
\begin{equation*}
  \mathcal A_j := \bigl\{K\in\{0,\ldots,m\}^d\setminus \{\mathbf 0 \}:\ \|K\|_\infty=j\bigr\},
  \qquad \|K\|_\infty:=\max_{1\le i\le d}|K_i|.
\end{equation*}
Then
\begin{equation*}
  S = \sum_{j=1}^m \ \sum_{K\in\mathcal A_j}\frac{1}{\|K\|_2^{\,k}}.
\end{equation*}
For $K\in\mathcal A_j$ we have $j\le \|K\|_2 \le \sqrt d\,j$, hence
\begin{equation*}
  \frac{1}{\|K\|_2^{\,k}} \le \frac{1}{j^{k}} \qquad\Rightarrow\qquad
  S \le \sum_{j=1}^m \frac{|\mathcal A_j|}{j^{k}}.
\end{equation*}
Points with $\|K\|_\infty\le j$ form a $(j+1)^d$ grid, hence $|\mathcal A_j| = (j+1)^d - j^d$.
By the mean value theorem applied to $x\mapsto x^d$, there exists $\xi\in(j,j+1)$ such that
\begin{equation*}
  (j+1)^d - j^d = d\,\xi^{d-1} \le d\,(j+1)^{d-1}\le d\,2^{d-1}j^{d-1}\qquad \text{when }j\ge 1.
\end{equation*} Therefore
\begin{equation}\label{eq:stars}
  S \;\le\; d\,2^{\,d-1}\sum_{j=1}^m j^{\,d-1-k}.
\end{equation}
Let $\alpha:=d-1-k$. Using the integral bound:
\begin{itemize}
  \item If $k<d$ (i.e.\ $\alpha>-1$), then
  \begin{equation*}
    \sum_{j=1}^m j^{\alpha} \;\le\; 1 + \int_{1}^{m} x^{\alpha}\,dx
    \;=\; 1 + \frac{m^{\alpha+1}-1}{\alpha+1}
    \;=\; 1 + \frac{m^{\,d-k}-1}{d-k}.
  \end{equation*}
  Substituting into \eqref{eq:stars} gives
  \begin{equation*}
    S \;\le\; d\,2^{\,d-1}\!\left(1+\frac{m^{\,d-k}-1}{d-k}\right).
  \end{equation*}

  \item If $k=d$ (i.e.\ $\alpha=-1$), then
  \begin{equation*}
    \sum_{j=1}^m \frac{1}{j} \;\le\; 1 + \int_{1}^{m} \frac{dx}{x}
    \;=\; 1+\log m,
  \end{equation*}
  and \eqref{eq:stars} yields
    $S\le d2^{\,d-1}\,(1+\log m)$.

  \item If $k>d$ (i.e.\ $\alpha<-1$), then
  \begin{equation*}
    \sum_{j=1}^{\infty} j^{\alpha} \;\le\; 1 + \int_{1}^{\infty} x^{\alpha}\,dx
    \;=\; 1 + \frac{1}{k-d},
  \end{equation*}
  hence $\sum_{j=1}^{m} j^{\alpha} \le 1 + \frac{1}{k-d}$ for all $m\ge 1$ and
  \begin{equation*}
    S \le d2^{d-1}\!\left(1+\frac{1}{k-d}\right).
  \end{equation*}
\end{itemize}
This finishes the proof for all three cases.
\end{proof}

\section{Proof of Theorem~\ref{thm: convergence_rate} and the choice of $m'$ in Algorithm~\ref{alg: main}} 
\label{sec: convergence_rate}

\begin{proof}
    We will apply Theorem~\ref{thm: IPM_Chebyshev} to prove the theorem. Let the parameter $m$ in Theorem~\ref{thm: IPM_Chebyshev} be determined later. For any nonzero $K\in \mathbb N^d$, we have a uniform upper bound 
    \begin{align*}
        \E  \left\vert \E_{X\sim {q_n}} \overline{T}_K(X) - \E_{X\sim q} \overline{T}_K(X)\right\vert ^2 &= \E \left\vert \frac{1}{n} \sum_{i=1}^n\left[\overline{T}_K(\bx_i) - \E \, \overline{T}_K(\bx_i)\right]\right\vert ^2\\
        & = \frac{1}{n^2} \mathrm{Var}\left(\sum_{i=1}^n\overline{T}_K(\bx_i)\right)\\
        & = \frac{1}{n}\mathrm{Var}\left(\overline{T}_K(\bx_1)\right) \\
        &\leq \frac{1}{n} \E \,\overline{T}_K(\bx_1)^2 \leq \frac{2^{d}}{n}.
    \end{align*}
    Hence 
    \[(\E \Gamma)^2 \leq {\E \Gamma^2} \leq \frac{2^{d}}{n}\sum_{K\in \{0,...,m\}^d\setminus\{0\}} \frac{1}{\|K\|_2^{2k}} \leq \begin{cases}
        \frac{d\,2^{2d-1}}{n}\!\left(1+\dfrac{m^{\,d-2k}-1}{d-2k}\right), & \text{if } 2k<d,\\
        \frac{d\,2^{2d-1}}{n}\,(1+\log m), & \text{if }2k=d,\\
        \frac{d\,2^{2d-1}}{n}\!\left(1+\dfrac{1}{2k-d}\right), &\text{if }2k>d.
    \end{cases}\]
    Here we substitute our estimate for $S$ with parameter $2k$ in Lemma~\ref{lem: est_S}. As a result, applying the conclusion from Theorem~\ref{thm: IPM_Chebyshev}, we have 
    \[\E d_k(q_n ,q) \leq  \frac{2C_k^{\mathrm{Jac}}\cdot d}{m^k} + \sqrt{C_{d,k}'}\cdot \E\Gamma.\]\
    \begin{enumerate}
        \item When $2k<d$, we can take $m = n^{1/d}/C$ for some large constant $C>0$ and 
        \[\E d_k(q_n ,q) \leq  \frac{2C_k^{\mathrm{Jac}}\cdot d}{m^k} + \sqrt{C_{d,k}'}\cdot \sqrt{\frac{2d2^{2d-1}m^{d-2k}}{n}} \leq \frac{C_{d,k}''}{n^{k/d}}.\]
        \item When $2k=d$, we can take $m=n$ and 
         \[\E d_k(q_n ,q) \leq  \frac{2C_k^{\mathrm{Jac}}\cdot d}{n^k} + \sqrt{C_{d,k}'}\cdot \sqrt{\frac{2d2^{2d-1}\log n}{n}} \leq C_{d,k}'' \sqrt{\frac{\log n}{n}}.\]
        \item When $2k>d$, as the bound of $\E \Gamma$ is not related to $m$, we can take $m$ sufficiently large and 
        \[\E d_k(q_n ,q) \leq 2\sqrt{C_{d,k}'\E\Gamma^2}\cdot \leq \frac{C_{d,k}''}{\sqrt{n}}.\]
    \end{enumerate}
    Here we can take $C_{d,k}'' = (C k\sqrt{d})^k$ for some large constant $C$.
\end{proof}

\begin{remark}[The choice of $m'$ in Algorithm~\ref{alg: main}]\label{rmk:choiceofm}
    The best choice of $m'$ is to take a small integer such that $\E d_k(p_Y, q) \lesssim (\varepsilon n)^{-\min\{1, k/d\}}$ (an extra $\log(\varepsilon n)$ factor may apply when $d=k$) in the accuracy discussion of Theorem~\ref{thm:main}. Considering the regimes in Theorem~\ref{thm:main} and also Theorem~\ref{thm: convergence_rate}, a more detailed discussion of the choices of $m'$ can be made depending on the values of $d,k$ and $2k$, which gives five cases in total: $d<k, d=k, k<d<2k, d=2k$ and $d>2k$. We omit sharper computations for each case and use a simpler version as shown in \eqref{eq: m'choice}.
\end{remark}

\section{Computational complexity of Algorithm~\ref{alg: main}}\label{sec:complexity}
    The running time of  Algorithm~\ref{alg: main} is dominated by the optimization step, which solves a constrained least-squares problem involving $m^{kd}$ variables. We can write the optimization step as solving the quadratic programming problem 
    \begin{align*}
        & \min_{\bx\in \R^N} \|\bA\bx-\bb\|_2^2 \\
        & \;\;\mathrm{s.t.} \quad \mathbf{1}^\top \bx = 1; \quad \bx \geq 0,
    \end{align*}
    where $\bA\in \R^{M\times N}$ and $M=(m+1)^d-1, N=m^{kd}$. 
    
    Solving such a quadratic programming problem with error $\eta$ gives time complexity $O(\frac{MN}{\sqrt{\eta}})$ according to \cite{beck2009fast}, which is due to $O(1/\sqrt{\eta})$ many iterations, and each iteration has time complexity $O(MN)$. 
    With the choice of $m$ specified in Section~\ref{sec: Accuracy} and $\eta=(\varepsilon n)^{-2}$, this leads to an overall time complexity of \[O\left((\varepsilon n)^{\min\{k,d\}+2}\right).\] We also refer to \citep{boyd2004convex} for background on alternative methods (e.g., Interior Point Method) in convex optimization, which also provide polynomial time complexity. Finally, we note that the optimization problem exhibits additional tensor structure that could potentially be exploited to further reduce the computational cost; exploring such improvements, however, is beyond the scope of this work.

\end{document}

%% file: macros.tex
\newcommand{\ip}[2]{\left\langle #1,#2\right\rangle}
\newcommand{\norm}[1]{\left\lVert #1\right\rVert}

\newcommand{\R}{\mathbb{R}}
\newcommand{\Z}{\mathbb{Z}}
\newcommand{\E}{\mathbb{E}}

\newcommand{\0}{{\mathbf{0}}}
\newcommand{\bx}{{\mathbf{x}}}
\newcommand{\by}{{\mathbf{y}}}
\newcommand{\bu}{{\mathbf{u}}}
\newcommand{\ba}{{\mathbf{a}}}
\newcommand{\be}{{\mathbf{e}}}
\newcommand{\bg}{{\mathbf{g}}}
\newcommand{\bA}{{\mathbf{A}}}
\newcommand{\bb}{{\mathbf{b}}}

\newcommand{\btheta}{{\boldsymbol{\theta}}}
\newcommand{\balpha}{{\boldsymbol{\alpha}}}
\newcommand{\bbeta}{{\boldsymbol{\beta}}}

\newcommand{\dd}{\mathrm{d}}

\DeclareMathOperator{\nnz}{nnz}

\definecolor{magenta}{RGB}{255,0,255}

\newtheorem{construction}[theorem]{Construction}

\newcommand{\<}{\left\langle}
\renewcommand{\>}{\right\rangle}